

\documentclass[reqno]{amsart}


\usepackage{amsmath}
\usepackage{amssymb}
\usepackage{amsfonts}
\usepackage{amsthm}

\usepackage[utf8]{inputenc}
\usepackage[T1]{fontenc}









\usepackage[mathcal]{euscript}

\usepackage[dvipsnames,svgnames]{xcolor}
\colorlet{MyBlue}{DodgerBlue!75!Black}

\usepackage{subfigure}

\usepackage{acronym}
\usepackage{latexsym}
\usepackage{paralist}
\usepackage{xspace}

\usepackage[square,numbers,sort&compress]{natbib}

\usepackage{hyperref}
\hypersetup{
colorlinks=true,
linktocpage=true,
pdfstartview=FitH,
breaklinks=true,
pdfpagemode=UseNone,
pageanchor=true,
pdfpagemode=UseOutlines,
plainpages=false,
bookmarksnumbered,
bookmarksopen=false,
bookmarksopenlevel=1,
hypertexnames=true,
pdfhighlight=/O,
urlcolor=Maroon,linkcolor=MyBlue!60!black,citecolor=DarkGreen!70!black, 
pdftitle={},
pdfauthor={},
pdfsubject={},
pdfkeywords={},
pdfcreator={pdfLaTeX},
pdfproducer={LaTeX with hyperref}
}



%






\newcommand{\R}{\mathbb{R}}

\DeclareMathOperator{\argdot}{\cdot}

\DeclareMathOperator{\bd}{bd}

\DeclareMathOperator{\diag}{diag}

\DeclareMathOperator{\exclude}{\backslash}

\DeclareMathOperator{\grad}{grad}

\DeclareMathOperator{\hess}{Hess}

\DeclareMathOperator{\supp}{supp}


\newcommand{\dd}{\,d}
\newcommand{\eps}{\varepsilon}
\newcommand{\from}{\colon}
\newcommand{\injects}{\hookrightarrow}

\newcommand{\pd}{\partial}
\newcommand{\simplex}{\Delta}

\newcommand{\abs}[1]{\left\lvert #1 \right\rvert}
\newcommand{\smallabs}[1]{\lvert #1 \rvert}
\newcommand{\norm}[1]{\left\| #1 \right\|}
\newcommand{\smallnorm}[1]{\| #1 \|}

\newcommand{\product}[2]{\left\langle #1,  #2 \right\rangle}
\newcommand{\smallproduct}[2]{\langle #1,  #2 \rangle}

\newcommand{\braket}[2]{\left\langle #1 \middle\vert  #2 \right\rangle}

\newcommand{\defeq}{\equiv}
\newcommand{\eqdef}{\equiv}


\newcommand{\txs}{\textstyle}

\newcommand{\insum}{\sum\nolimits}

\usepackage{datetime}
\usepackage[textwidth=35mm]{todonotes}
\usepackage{soul}
\setstcolor{red}
\sethlcolor{SkyBlue}


\theoremstyle{plain}
\newtheorem{theorem}{Theorem}
\newtheorem{corollary}[theorem]{Corollary}
\newtheorem*{corollary*}{Corollary}
\newtheorem{lemma}[theorem]{Lemma}
\newtheorem{proposition}[theorem]{Proposition}

\theoremstyle{definition}

\newtheorem*{definition*}{Definition}

\newtheorem{remark}{Remark}
\newtheorem{example}{Example}

\newenvironment{proofof}[1]{\begin{proof}[#1]}{\end{proof}}

%
%
%



\numberwithin{equation}{section}
\numberwithin{theorem}{section}
\numberwithin{remark}{section}
\numberwithin{example}{section}


\newcommand{\play}{\mathcal{N}}
\newcommand{\act}{\mathcal{A}}
\newcommand{\pay}{u}
\newcommand{\payv}{v}
\newcommand{\strat}{\mathcal{X}}
\newcommand{\game}{\mathfrak{G}}

\newcommand{\eq}{x^{\ast}}
\newcommand{\eqset}{\strat^{\ast}}
\newcommand{\omlim}{x^{\omega}}

\newcommand{\kin}{K}
\newcommand{\pot}{\Phi}
\newcommand{\temp}{\eta}

\usepackage{xstring}

\newcommand{\set}{\mathcal{S}}

\newcommand{\bvec}{e}

\newcommand{\orthant}{\R_{++}}
\newcommand{\orthalt}{C}
\newcommand{\intstrat}{\strat^{\circ}}
\newcommand{\intsimplex}{\simplex\!^{\circ}}
\newcommand{\open}{U}
\newcommand{\nhd}{U}

\newcommand{\del}{\nabla}
\newcommand{\vel}{\upsilon}
\newcommand{\Fsup}{F_{\textup{max}}}
\newcommand{\vsup}{\vel_{\textup{max}}}

\newcommand{\friction}{\eta}

\newcommand{\trans}{\top}

\newcommand{\sections}{\mathcal{T}}




\begin{document}


\title
[Inertial game dynamics]
{Inertial game dynamics and\\%
applications to constrained optimization}

\author[R.~Laraki]{Rida Laraki}
\address
[R.~Laraki]
{CNRS (French National Center for Scientific Research), LAMSADE\textendash Paris-Dauphine, Paris, France\\
and
École Polytechnique, Department of Economics, Paris, France}
\email{\href{mailto:rida.laraki@lamsade.dauphine.fr}{rida.laraki@lamsade.dauphine.fr}}
\urladdr{\url{https://sites.google.com/site/ridalaraki}}

\author[P.~Mertikopoulos]{Panayotis Mertikopoulos}
\address
[P.~Mertikopoulos]
{CNRS (French National Center for Scientific Research), LIG, F-38000 Grenoble, France\\
and
Univ. Grenoble Alpes, LIG, F-38000 Grenoble, France}
\email{\href{mailto:panayotis.mertikopoulos@imag.fr}{panayotis.mertikopoulos@imag.fr}}
\urladdr{\url{http://mescal.imag.fr/membres/panayotis.mertikopoulos}}

\thanks{The authors are sincerely grateful to Jérôme Bolte for proposing the term ``inertial'' and for many insightful discussions.}
\thanks{The authors gratefully acknowledge financial support from the French National Agency for Research under grants ANR-10-BLAN-0112-JEUDY, ANR-13-JS01-GAGA-0004-01 and ANR-11-IDEX-0003-02/Labex ECODEC ANR-11-LABEX-0047 (part of the program ``Investissements d'Avenir'').
The second author was also partially supported by the Pôle de Recherche en Mathématiques, Sciences, et Technologies de l'Information et de la Communication under grant no. C-UJF-LACODS MSTIC 2012 and the European Commission in the framework of the FP7 Network of Excellence in Wireless COMmunications NEWCOM\# (contract no. 318306).}

\newacro{ESS}{evolutionarily stable state}
\newacro{EW}{exponential weights}
\newacro{HR}{Hess\-i\-an\textendash Rie\-man\-ni\-an}
\newacro{QR}{quantal response}
\newacro{KKT}{Ka\-rush\textendash Kuhn\textendash Tuc\-ker}
\newacro{RPS}[\textsf{RPS}]{Rock-Paper-Scissors}
\newacro{MP}{Matching Pennies}
\newacro{ODE}{ordinary differential equation}
\newacro{OD}[O/D]{origin\textendash destination}
\newacro{OMD}{online mirror descent}
\newacro{FTRL}{follow the regularized leader}
\newacro{MD}{mirror descent}
\newacro{KL}{Kullback\textendash Leibler}

\subjclass[2010]{
Primary 90C51, 91A26;
secondary 34A12, 34A26, 34D05, 70F40.}

\keywords{
Game dynamics;
folk theorem;
\acl{HR} metrics;
learning;
replicator dynamics
second-order dynamics;
stability of equilibria.}

\begin{abstract}
Aiming to provide a new class of game dynamics with good long-term rationality properties, we derive a second-order inertial system that builds on the widely studied ``heavy ball with friction'' optimization method.
By exploiting a well-known link between the replicator dynamics and the Shahshahani geometry on the space of mixed strategies, the dynamics are stated in a Riemannian geometric framework where trajectories are accelerated by the players' unilateral payoff gradients and they slow down near Nash equilibria.
Surprisingly (and in stark contrast to another second-order variant of the replicator dynamics), the inertial replicator dynamics are not well-posed;
on the other hand, it is possible to obtain a well-posed system by endowing the mixed strategy space with a different \ac{HR} metric structure and we characterize those \ac{HR} geometries that do so.
In the single-agent version of the dynamics (corresponding to constrained optimization over simplex-like objects), we show that regular maximum points of smooth functions attract all nearby solution orbits with low initial speed.
More generally, we establish an inertial variant of the so-called ``folk theorem'' of evolutionary game theory and we show that strict equilibria are attracting in asymmetric (multi-population) games \textendash\ provided of course that the dynamics are well-posed.
A similar asymptotic stability result is obtained for \aclp{ESS} in symmetric (single-population) games.
\end{abstract}

\maketitle

\acresetall


\vspace{-3em}
\setcounter{tocdepth}{1}
\tableofcontents

\section{Introduction}
\label{sec:introduction}

One of the most widely studied dynamics for learning and evolution in games is the classical replicator equation of Taylor and Jonker \cite{TJ78}, first introduced as a model of population evolution under natural selection.
Stated in the context of finite $N$-player games with each player $k\in\{1,\dotsc,N\}$ choosing an action from a finite set $\act_{k}$, these dynamics take the form:
\begin{equation}
\label{eq:RD}
\tag{RD}
\dot x_{k\alpha}
	= x_{k\alpha} \left[ \payv_{k\alpha}(x) - \insum_{\beta\in\act_{k}} x_{k\beta} \payv_{k\beta}(x) \right],
\end{equation}
where $x_{k} = (x_{k\alpha})_{\alpha\in\act_{k}}$ denotes the mixed strategy of player $k$ (i.e. $x_{k\alpha}$ represents the probability with which player $k$ selects $\alpha\in\act_{k}$) while $\payv_{k\alpha}(x)$ denotes the expected payoff to action $\alpha\in\act_{k}$ in the mixed strategy profile $x = (x_{1},\dotsc,x_{N})$.%
\footnote{In the mass-action interpretation of population games, $x_{k\alpha}$ represents the proportion of players in population $k$ that use strategy $\alpha\in\act_{k}$ and $\payv_{k\alpha}(x)$ is the associated fitness.}

Accordingly, a considerable part of the literature has focused on the long-term rationality properties of the replicator dynamics.
First, building on early work by Akin \cite{Aki80} and Nachbar \cite{Nac90}, Samuelson and Zhang \cite{SZ92} showed that dominated strategies become extinct along every interior trajectory of \eqref{eq:RD}.
Second, the so-called ``folk theorem'' of evolutionary game theory states that
\begin{inparaenum}
[\itshape a\upshape)]
\item
Nash equilibria are stationary in \eqref{eq:RD};
\item
limit points of interior trajectories are Nash;
and
\item
strict Nash equilibria are asymptotically stable under \eqref{eq:RD} \cite{HS98,HS03}.
\end{inparaenum}
Finally, when the game admits a potential function (in the sense of \cite{MS96}), interior trajectories of \eqref{eq:RD} converge to the set of Nash equilibria that are local maximizers of the game's potential \cite{HS98}.

To a large extent, the strong rationality properties of the replicator dynamics are owed to their dual nature as a reinforcement learning/unilateral optimization device.
The former aspect is provided by the link between \eqref{eq:RD} and the so-called \ac{EW} algorithm where players
choose an action with probability that is exponentially proportional to its cumulative payoff over time \cite{LW94,MM10,Rus99,Sor09,Vov90}.
In continuous time, this process formally amounts to the dynamical system:
\begin{equation}
\label{eq:EW}
\tag{EW}
\begin{aligned}
\dot y_{k\alpha}
	&= \payv_{k\alpha},
	\\
x_{k\alpha}
	&= \frac{\exp(y_{k\alpha})}{\sum_{\beta} \exp(y_{k\beta})},
\end{aligned}
\end{equation}
and, as can be seen by a simple differentiation, \eqref{eq:EW} is equivalent to \eqref{eq:RD}.

Dually, from an optimization perspective, the replicator dynamics can also be seen as a unilateral gradient ascent scheme where, to maximize their individual payoffs, players ascend the (unilateral) gradient of their payoff functions with respect to a particular geometry on the simplex \textendash\ the so-called \emph{Shahshahani metric}, given by the metric tensor $g_{\alpha\beta}(x) = \delta_{\alpha\beta}/x_{\alpha}$ for $x_{\alpha}>0$ \cite{Sha79}.
In this light, \eqref{eq:RD} can be recast as:
\begin{equation}
\label{eq:GD-S}
\dot x_{k}
	= \grad_{k}^{S} \pay_{k}(x),
\end{equation}
where $\grad_{k}^{S}\pay_{k}(x)$ denotes the unilateral Shahshahani gradient of the expected payoff function $\pay_{k}(x) = \sum_{\alpha} x_{k\alpha} \payv_{k\alpha}(x)$ of player $k$ \cite{Aki79,Hof96,HS90,Sha79}.%
\footnote{For our purposes, ``unilateral'' here means differentiation with respect to the variables that are directly under the player's control (as opposed to all variables, including other players' strategies).}
Owing to this last interpretation, \eqref{eq:RD} becomes a proper Shahshahani gradient ascent scheme in the class of potential games:
the game's potential acts as a global Lyapunov function for \eqref{eq:RD}, so interior trajectories converge to the set of Nash equilibria that are local maximizers thereof \cite{HS90,HS98}.%
\footnote{By contrast, using ordinary Euclidean gradients and projections leads to the well-known (Euclidean) projection dynamics of Friedman \cite{Fri91};
however, because Euclidean trajectories may collide with the boundary of the game's state space in finite time, the folk theorem of evolutionary game theory does not hold in a Euclidean context, even when the game is a potential one \cite{San10}.}

Despite these important rationality properties, the replicator dynamics fail to eliminate weakly dominated strategies \cite{Sam93};
furthermore, as is the case with all \emph{first-order} game dynamics \cite{HMC03}, they do not converge to Nash equilibrium in all games.
Thus, motivated by the success of second-order, ``heavy ball with friction'' methods in optimization \cite{Alv00,AABR02,Ant94,AGR00,HJ98,Pol87},
our first goal in this paper is to examine whether it is possible to obtain better convergence properties and/or escape the first-order impossibility results of \cite{HMC03} in a second-order setting.

To that end,
if we replace $\dot y$ by $\ddot y$ in \eqref{eq:EW}, we obtain the dynamics:
\begin{equation}
\label{eq:EW-2}
\tag{EW$_{2}$}
\begin{aligned}
\ddot y_{k\alpha}
	&= \payv_{k\alpha},
	\\
x_{k\alpha}
	&= \frac{\exp(y_{k\alpha})}{\sum_{\beta} \exp(y_{k\beta})}.
\end{aligned}
\end{equation}
These second-order exponential learning dynamics were studied in the very recent paper \cite{LM13} where it was shown that \eqref{eq:EW-2} is equivalent to the \emph{second-order replicator equation}:
\begin{equation}
\label{eq:RD-2}
\tag{RD$_{2}$}
\begin{aligned}
\ddot x_{k\alpha}
	&= x_{k\alpha}
	\left[
	\payv_{k\alpha}(x) - \insum_{\beta\in\act_{k}} x_{k\beta} \payv_{k\beta}(x)
	\right]
	\\
	&+ x_{k\alpha} \left[
	\dot x_{k\alpha}^{2}\big/ x_{k\alpha}^{2} - \insum_{\beta\in\act_{k}} \dot x_{k\beta}^{2}\big/x_{k\beta}
	\right].
\end{aligned}
\end{equation}
Importantly, under \eqref{eq:EW-2}/\eqref{eq:RD-2}, even \emph{weakly} dominated strategies become extinct;
such strategies may survive in perpetuity under the first-order dynamics \eqref{eq:RD}, so this represents a marked advantage for using second-order methods in games.

That being said, the second-order system \eqref{eq:RD-2} has no obvious ties to the gradient ascent properties of its first-order counterpart, so it is not clear whether its trajectories converge to Nash equilibrium in potential games.
On that account, a natural way to regain this connection would be to see whether \eqref{eq:RD-2} can be linked to the ``heavy ball with friction'' system:
\begin{equation}
\label{eq:HBF-S}
\frac{D^{2} x_{k}}{Dt^{2}}
	= \grad_{k}^{S} \pay_{k}(x)
	- \friction \dot x_{k},
\end{equation}
where
$\frac{D^{2}x_{k}}{Dt^{2}}$ denotes the covariant acceleration of $x_{k}$ under the Shahshahani metric
and
$\friction \geq 0$ is a friction coefficient, included in \eqref{eq:HBF-S} to slow down trajectories and enable convergence.
In this way, if the game admits a potential function $\pot$, the total energy $E(x,\dot x) = \frac{1}{2}\norm{\dot x}^{2} - \pot(x)$ will be Lyapunov under \eqref{eq:HBF-S} (by construction), so \eqref{eq:HBF-S} is intuitively expected to converge to the set of Nash equilibria of the game that are local maximizers of $\pot$.


Writing everything out in components (see Section \ref{sec:dynamics} for detailed definitions and derivations), we obtain the \emph{inertial replicator dynamics}:%
\footnote{We are very grateful to Jérôme Bolte for suggesting the term ``inertial''.} 
\begin{equation}
\label{eq:IRD}
\tag{IRD}
\begin{aligned}
\ddot x_{k\alpha}
	&= x_{k\alpha}
	\left[
	\payv_{k\alpha}(x) - \insum_{\beta\in\act_{k}} x_{k\beta} \payv_{k\beta}(x)
	\right]
	\\
	&+ \frac{1}{2} x_{k\alpha}
	\left[
	\dot x_{k\alpha}^{2}\big/ x_{k\alpha}^{2} - \insum_{\beta\in\act_{k}} \dot x_{k\beta}^{2}\big/x_{k\beta}
	\right]
	- \friction \dot x_{k\alpha},
\end{aligned}
\end{equation}
with the ``inertial'' velocity-dependent term of \eqref{eq:IRD} stemming from covariant differentiation under the Shahshahani metric.
Rather surprisingly (and in stark contrast to the first-order case),
we see that \eqref{eq:EW-2} and \eqref{eq:HBF-S} lead to dynamics that are similar but \emph{not} identical:
in the baseline, frictionless case ($\friction=0$), \eqref{eq:RD-2} and \eqref{eq:IRD} differ by a factor of $1/2$ in their velocity-dependent terms.
Further, in an even more surprising twist, this seemingly innocuous factor actually leads to drastic differences:
solutions to \eqref{eq:IRD} typically fail to exist for all time, so the rationality properties of the first- and second-order replicator dynamics do not (in fact, \emph{cannot}) extend to \eqref{eq:IRD}.

The reason that \eqref{eq:IRD} fails to be well-posed is deeply geometric and has to do with the fact that the Shahshahani simplex is isometric to an orthant of the Euclidean sphere (a bounded set that cannot restrain second-order ``heavy ball'' trajectories).
On that account, the second main goal of our paper is to examine whether the ``heavy ball with friction'' optimization principle that underlies \eqref{eq:HBF-S} can lead to a well-posed system with good rationality properties \emph{under a different choice of geometry}.

To that end, we focus on the class of \ac{HR} metrics \cite{Dui01,Shi77} that have been studied extensively in the context of convex programming \cite{ABB04,BT03};
in fact, the proposed class of dynamics provides a second-order, inertial extension of the gradient-like dynamics of \cite{BT03} to a game-theoretic setting with several agents, each seeking to maximize their individual payoff function.
The reason for focusing on the class of \ac{HR} metrics is that they are generated by taking the Hessian of a steep, strongly convex function over the problem's state space (a simplex-like object in our case), so, thanks to the geometry's ``steepness'' at the boundary of the feasible region, the induced first-order gradient flows are well-posed.
Of course, as the Shahshahani case shows,%
\footnote{In a certain sense, the Shahshahani metric (and the induced replicator dynamics) is the archetypal \acl{HR} metric, obtained by taking the Hessian of the Gibbs negative entropy.}
this ``steepness'' is not enough to guarantee well-posedness in a second-order setting;
however, if the geometry is ``steep enough'' (in a certain, precise sense), the resulting dynamics are well-posed and exhibit a fair set of long-term rationality properties (including convergence to equilibrium in the class of potential games).


The breakdown of our analysis is as follows:
in Section \ref{sec:dynamics}, we present an explicit derivation of the class of inertial game dynamics under study and we discuss their ``energy minimization'' properties in the class of potential games.
Our asymptotic analysis begins in Section \ref{sec:asymptotics} where we discuss the well-posedness problems that arise in the case of the replicator dynamics and we derive a geometric characterization of the \ac{HR} structures that lead to a well-posed flow:
as it turns out, global solutions exist if and only if the interior of the game's strategy space can be mapped isometrically to a closed (but not compact) hypersurface of some ambient Euclidean space.

Our rationality and convergence results are presented in Section \ref{sec:results}.
First, from an optimization viewpoint, we show that isolated maximizers of smooth functions defined on simplex-like objects are asymptotically stable;
as a result, Nash equilibria that are potential maximizers are asymptotically stable in potential games.
More generally, we establish the following ``folk theorem'' for general (multi-population) games:
\begin{inparaenum}[\itshape a\upshape)]
\item
Nash equilibria are stationary;
\item
if an interior orbit converges, its limit is a restricted equilibrium;
and
\item
strict equilibria attract all nearby trajectories.
\end{inparaenum}
Finally, in the framework of symmetric, single-population games, we show that \acp{ESS} are asymptotically stable in doubly symmetric games, providing in this way an extension of the corresponding result for the standard (single-population) replicator dynamics \cite{HS98};
by contrast, this result does not hold under the second-order replicator dynamics \eqref{eq:RD-2}.

For completeness, some elements of Riemannian geometry are discussed in Appendix \ref{app:geometry} (mostly to fix terminology and notation);
finally, to streamline the flow of ideas in the paper, some proofs and calculations have been delegated to Appendix \ref{app:calculations}.

\subsection{Notational conventions}
\label{sec:notation}

If $W$ is a vector space, we will write $W^{\ast}$ for its dual and $\braket{\omega}{w}$ for the pairing between the primal vector $w\in W$ and the dual vector $\omega\in W^{\ast}$.
By contrast, an inner product on $W$ will be denoted by $\product{\argdot}{\argdot}$, writing e.g. $\product{w}{w'}$ for the product between the (primal) vectors $w,w'\in W$.

The real space spanned by the finite set $\set = \{s_{\alpha}\}_{\alpha=0}^{n}$ will be denoted by $\R^{\set}$ and we will write $\{\bvec_{s}\}_{s\in\set}$ for its canonical basis.
In a slight abuse of notation, we will also use $\alpha$ to refer interchangeably to either $s_{\alpha}$ or $\bvec_{\alpha}$
and we will write $\delta_{\alpha\beta}$ for the Kronecker delta symbols on $\set$.
The set $\simplex(\set)$ of probability measures on $\set$ will be identified with the $n$-dimensional simplex $\simplex = \{x\in \R^{\set}: \sum_{\alpha} x_{\alpha} =1 \text{ and }x_{\alpha}\geq 0\}$ of $\R^{\set}$ and the relative interior of $\simplex$ will be denoted by $\intsimplex$.
Finally, if $\{\set_{k}\}_{k\in\play}$ is a finite family of finite sets, we will use the shorthand $(\alpha_{k};\alpha_{-k})$ for the tuple $(\dotsc,\alpha_{k-1},\alpha_{k},\alpha_{k+1},\dotsc)$;
also, when there is no danger of confusion, we will write $\sum_{\alpha}^{k}$ instead of $\sum_{\alpha\in\set_{k}}$.

\subsection{Definitions from game theory}
\label{sec:games}

A \emph{finite game in normal form} is a tuple $\game \defeq \game(\play,\act,\pay)$ consisting of
\begin{inparaenum}[\itshape a\upshape)]
\item
a finite set of \emph{players} $\play = \{1,\dotsc,N\}$;
\item
a finite set $\act_{k}$ of \emph{actions} (or \emph{pure strategies}) per player $k\in\play$;
and
\item
the players' \emph{payoff functions} $\pay_{k}\from \act\to \R$,
where $\act\equiv\prod_{k}\act_{k}$ denotes the set of all joint action profiles $(\alpha_{1},\dotsc,\alpha_{N})$.
\end{inparaenum}
The set of \emph{mixed strategies} of player $k$ will be denoted by $\strat_{k}\equiv\simplex(\act_{k})$ and we will write $\strat\equiv\prod_{k}\strat_{k}$ for the game's \emph{state space} \textendash\ i.e. the space of \emph{mixed strategy profiles} $x = (x_{1},\dotsc,x_{N})$.
Unless mentioned otherwise, we will write $V_{k} \equiv \R^{\act_{k}}$ and $V \equiv \prod_{k} V_{k} \cong \R^{\coprod_{k} \act_{k}}$ for the ambient spaces of $\strat_{k}$ and $\strat$ respectively.

The \emph{expected payoff} of player $k$ in the strategy profile $x = (x_{1},\dotsc,x_{N})\in \strat$ is
\begin{equation}
\label{eq:pay}
\pay_{k}(x)
	= \insum_{\alpha_{1}}^{1}\dotsi \insum_{\alpha_{N}}^{N}
	\pay_{k}(\alpha_{1},\dotsc,\alpha_{N}) \; x_{1,\alpha_{1}} \dotsm\, x_{N,\alpha_{N}},
\end{equation}
where $\pay_{k}(\alpha_{1},\dotsc,\alpha_{N})$ denotes the payoff of player $k$ in the pure profile $(\alpha_{1},\dotsc,\alpha_{N})\in\act$.
Accordingly, the payoff corresponding to $\alpha\in\act_{k}$ in the mixed profile $x\in\strat$ is
\begin{flalign}
\label{eq:payv}
\payv_{k\alpha}(x)
	&= \insum_{\alpha_{1}}^{1} \dotsi \insum_{\alpha_{N}}^{N}
	\pay_{k}(\alpha_{1},\dotsc,\alpha_{N})
	\; x_{1,\alpha_{1}} \dotsm\,\delta_{\alpha_{k},\alpha} \dotsm\, x_{N,\alpha_{N}},
\end{flalign}
and we have
\begin{equation}
\label{eq:pay-pairing}
\pay_{k}(x)
	= \insum_{\alpha}^{k} x_{k\alpha} \payv_{k\alpha}(x)
	= \braket{\payv_{k}(x)}{x_{k}}
\end{equation}
where $\payv_{k}(x) = (\payv_{k\alpha}(x))_{\alpha\in\act_{k}}$ denotes the \emph{payoff vector} of player $k$ at $x\in\strat$.

In the above, $\payv_{k}$ is treated as a dual vector in $V_{k}^{\ast}$ that is paired to the mixed strategy $x_{k}\in\strat_{k}$;
on that account, mixed strategies will be regarded throughout this paper as \emph{primal} variables and payoff vectors as \emph{duals}.
Moreover, note that $\payv_{k\alpha}(x)$ does not depend on $x_{k\alpha}$ so we have $\payv_{k\alpha} = \frac{\pd\pay_{k}}{\pd x_{k\alpha}}$;
in view of this, we will often refer to $\payv_{k\alpha}$ as the \emph{marginal utility} of action $\alpha\in\act_{k}$ and we will identify $\payv_{k}(x)\in V^{\ast}$ with the (unilateral) differential of $\pay_{k}(x)$ with respect to $x_{k}$.

Finally, following \cite{MS96,San01}, we will say that $\game$ is a \emph{potential game} when it admits a potential function $\pot\from\strat\to\R$ such that:
\begin{equation}
\label{eq:potential}
\payv_{k\alpha}(x)
	= \frac{\pd\pot}{\pd x_{k\alpha}}
	\quad
	\text{for all $x\in\strat$ and for all $\alpha\in\act_{k}$, $k\in\play$,}
\end{equation}
or, equivalently:
\begin{equation}
\label{eq:potential-diff}
\pay_{k}(x_{k};x_{-k}) - \pay_{k}(x_{k}';x_{-k})
	= \pot(x_{k};x_{-k}) - \pot(x_{k}';x_{-k}),
\end{equation}
for all $x_{k}\in\strat_{k}$ and for all $x_{-k}\in\strat_{-k}\equiv\prod_{\ell\neq k} \strat_{\ell}$, $k\in\play$.

\section{Inertial game dynamics}
\label{sec:dynamics}

In this section, we introduce the class of inertial game dynamics that comprise the main focus of our paper.
For notational simplicity, most of our derivations are presented in the case of a single player with a finite action set $\act = \{0,\dotsc,n\}$;
the extension to the general, multi-player case is straightforward and simply involves reinstating the player index $k$ where necessary.

As we explained in the introduction, the dynamics under study in this unilateral framework boil down to the ``heavy ball with friction'' system:
\begin{equation}
\label{eq:HBF}
\tag{HBF}
\frac{D^{2}x}{Dt^{2}}
	= \grad\pay(x)
	- \friction \dot x,
\end{equation}
where gradients and covariant derivatives are taken with respect to a Riemannian metric $g$ on the game's state space $\strat \equiv \simplex(\act)$ \textendash\ for a brief discussion of the necessary concepts from Riemannian geometry, the reader is referred to Appendix \ref{app:geometry}.
Of course, in the ordinary Euclidean case (where covariant and ordinary derivatives coincide), there is no barrier term in \eqref{eq:HBF} that can constrain the dynamics' solution trajectories to remain in $\strat$ for all time;
as such, we begin by presenting a class of Riemannian metrics with a more appropriate boundary behavior.

\subsection{\acl{HR} metrics}
\label{sec:HR}

Following Bolte and Teboulle \cite{BT03} and Alvarez et al. \cite{ABB04}, we begin by endowing the positive orthant $\orthalt \equiv \orthant^{\act} \equiv \{x\in \R^{\act}: x_{\alpha}>0\}$ of the ambient space $V=\R^{\act}$ of $\strat$ with a Riemannian metric $g(x)$ that blows up at the boundary hyperplanes $x_{\alpha}=0$ \textendash\ raising in this way an inherent geometric barrier on the boundary $\bd(\strat)$ of $\strat$.

A standard device to achieve this blow-up is to define $g(x)$ as the Hessian of a strongly convex function $h\from\orthalt\to\R$ that becomes infinitely steep at the boundary of $\orthalt$ \cite{ABB04,BT03,MS14,SS11}.
To that end, let $\theta\from[0,+\infty) \to \R\cup\{+\infty\}$ be a $C^{\infty}$-smooth function satisfying the Legendre-type properties \cite{ABB04,BT03,Roc70}:%
\footnote{Legendre-type functions are usually defined without the regularity requirement $\theta'''<0$.
This assumption can be relaxed without significantly affecting our results but we will keep it for simplicity.}
\begin{equation}
\label{eq:kernel}
\tag{L}
\begin{aligned}
\text{1.}\quad
	& \text{$\theta(x) < \infty $ for all $x>0$.}
\hspace{200pt}\\[-1pt]
\text{2.}\quad
	& \txs\lim_{x\to0^{+}}\theta'(x) = -\infty.\\
\text{3.}\quad
	& \text{$\theta''(x) >0$ and $\theta'''(x)<0$ for all $x>0$.}
\end{aligned}
\end{equation}
We then define the associated \emph{penalty function}
\begin{equation}
\label{eq:penalty}
h(x)
	= \sum_{\alpha=0}^{n} \theta(x_{\alpha}),
\end{equation}
and we define a metric $g$ on $\orthalt$ by taking the Hessian of $h$, viz.:
\begin{equation}
\label{eq:HR}
g_{\alpha\beta}
	= \frac{\pd^{2}h}{\pd x_{\alpha} \pd x_{\beta}}
	= \theta_{\alpha}'' \delta_{\alpha\beta},
\end{equation}
where the shorthand $\theta_{\alpha}''$, $\alpha=0,\dotsc,n$, stands for $\theta''(x_{\alpha})$.
In other words, the \emph{\acl{HR} metric induced by $\theta$} is the field of positive-definite matrices
\begin{equation}
g(x)
	= \diag(\theta''(x_{0}),\dotsc,\theta''(x_{n})),
\quad
x\in\orthalt.
\end{equation}
With $h$ strictly convex (recall that $\theta''>0$), it follows that $g$ is indeed a Riemannian metric tensor on $\orthalt$;
following \cite{ABB04}, we will refer to $\theta$ as the \emph{kernel} of $g$.

\begin{remark}
The penalty function $h$ of \eqref{eq:penalty} is closely related to the class of \emph{control cost} functions used to define quantal responses in the theory of discrete choice \cite{MP95,vD87} and the class of \emph{regularizer functions }used in mirror descent methods for optimization and online learning \cite{MS14,NY83,Nes09,SS11};
for a detailed discussion, we refer the reader to \cite{ABB04,BT03,CGM15}.
In fact, more general \acl{HR} structures can be obtained by considering $C^{2}$-smooth strongly convex functions $h\from\orthalt\to\R$ that do not necessarily admit a decomposition of the form \eqref{eq:penalty}.
Most of our results can be extended to this non-separable setting but the calculations involved are significantly more tedious, so we will focus on the simpler, decomposable framework of \eqref{eq:penalty}.%
\footnote{In particular, the results that do not hold verbatim are those that call explicitly on $\theta$ \textendash\ most notably, Corollary \ref{cor:wp}.}
\end{remark}

\begin{example}[The Shahshahani metric]
The most widely studied example of a non-Euclidean \ac{HR} structure on the simplex is generated by the entropic kernel $\theta^{S}(x) = x \log x$.
By differentiation, we then obtain the \emph{Shahshahani metric} \cite{Aki79,ABB04,Sha79}:
\begin{equation}
\label{eq:Shah-matrix}
g^{S}(x)
	= \diag(1/x_{0},\dotsc,1/x_{n}),
	\quad
	x\in\orthalt,
\end{equation}
or, in coordinates:
\begin{equation}
\label{eq:Shah}
g_{\alpha\beta}^{S}(x)
	= \delta_{\alpha\beta}/x_{\beta}.
\end{equation}
\end{example}

\begin{example}[The log-barrier]
Another important example with close ties to proximal barrier methods in optimization (see e.g. \cite{ABB04,BT03} and references therein) is given by the logarithmic barrier kernel $\theta^{L}(x) = -\log x$ \cite{ABB04,BL89,Fia90,McC89}.
The associated penalty function is $h(x) = -\insum_{\alpha} \log x_{\alpha}$ and its Hessian generates the metric
\begin{equation}
\label{eq:log}
g_{\alpha\beta}^{L}(x)
	= \delta_{\alpha\beta}/x_{\beta}^{2},
\end{equation}
or, in matrix form:
\begin{equation}
\label{eq:log-matrix}
g^{L}(x)
	= \diag(1/x_{0}^{2},\dotsc, 1/x_{n}^{2}),
	\quad
	x\in\orthalt.
\end{equation}
An important qualitative difference between the kernels $\theta^{S}$ and $\theta^{L}$ is that the former remains bounded as $x\to0^{+}$ whereas the latter blows up;
this difference will play a key role with regard to the existence of global solutions.
\end{example}

\subsection{Derivation of the dynamics and examples}
\label{sec:calculations}

Having endowed $\orthalt$ with a \acl{HR} structure $g$ with kernel $\theta$, we continue with the calculation of the gradient and acceleration terms of \eqref{eq:HBF}.
To that end, it will be convenient to introduce the coordinate transformation
\begin{equation}
\label{eq:pi0}
\pi_{0} \from (x_{0},x_{1},\dotsc,x_{n})
	\mapsto (x_{1},\dotsc,x_{n}),
\end{equation}
which maps the affine hull of $\strat$ isomorphically to $V_{0} \equiv \R^{n}$ by eliminating $x_{0}$.
The (right) inverse of this transformation is given by the injection
\begin{equation}
\label{eq:iota0}
\txs
\iota_{0} \from (x_{1},\dotsc,x_{n})
	\mapsto (1-\insum_{\alpha=1}^{n} x_{\alpha}, x_{1},\dotsc,x_{n}),
\end{equation}
so \eqref{eq:pi0} provides a global coordinate chart for $\strat$ that will allow us to carry out the necessary geometric calculations.

As a first step, let $\{\bvec_{\alpha}\}_{\alpha=0}^{n}$ and $\{\tilde\bvec_{\mu}\}_{\mu=1}^{n}$ denote the canonical bases of $V$ and $V_{0}$ respectively.
Then, under $\iota_{0}$, $\tilde\bvec_{\mu}$ is pushed forward to $(\iota_{0})_{\ast} \tilde\bvec_{\mu} = \bvec_{\mu} - \bvec_{0}$,%
\footnote{Simply note that the image of the coordinate curve $\gamma_{\mu}(t) = t \tilde\bvec_{\mu}$ under $\iota_{0}$ is $-t\bvec_{0} + t\bvec_{\mu}$.}
so the component-wise expression of $g$ in the coordinates \eqref{eq:pi0} is:
\begin{equation}
\label{eq:metric-reduced}
\tilde g_{\mu\nu}
	= \product{\bvec_{\mu} - \bvec_{0}}{\bvec_{\nu} - \bvec_{0}}
	= g_{\mu\nu} + g_{00}
	= \theta_{\mu}'' \delta_{\mu\nu} +\theta_{0}''.
\end{equation}
With this coordinate expression at hand, let $f\from\intstrat\to\R$ be a (smooth) function on $\intstrat$ and write $\tilde f = f\circ\iota_{0}$, $(x_{1},\dotsc,x_{n}) \mapsto f(1 - \insum_{\alpha=1}^{n},x_{1},\dotsc,x_{n})$ for its coordinate expression under \eqref{eq:iota0}.
Referring to Appendix \ref{app:geometry} for the required background definitions,%
\footnote{We only mention here that $\grad f$ is characterized by the chain rule property $\frac{d}{dt} f(x(t)) = \product{\dot x(t)}{\grad f(x(t))}$ for every smooth curve $x(t)$ on $\intstrat$.}
the gradient of $f$ with respect to $g$ may be expressed as:
\begin{equation}
\label{eq:gradient-coords1}
\grad f
	= g^{-1}\cdot\nabla f
	= \sum_{\mu,\nu=1}^{n} \tilde g^{\mu\nu} \frac{\pd \tilde f}{\pd x_{\nu}} \tilde\bvec_{\mu}
\end{equation}
where $\tilde g^{\mu\nu}$ is the inverse matrix of $\tilde g_{\mu\nu}$.
By the inversion formula of Lemma \ref{lem:inversion}, we then obtain
\begin{equation}
\label{eq:metric-inverse}
\tilde g^{\mu\nu}
	= \frac{\delta_{\mu\nu}}{\theta_{\mu}''}
	- \frac{\Theta''}{\theta_{\mu}'' \theta_{\nu}''},
\end{equation}
where $\Theta'' = \big( \insum_{\beta} 1/\theta_{\beta}'' \big)^{-1}$ denotes the ``harmonic sum'' of the metric weights $\theta_{\beta}''$.%
\footnote{Note that $\Theta''$ is not a second derivative; we are only using this notation for visual consistency.}
Thus, by carrying out the summation in \eqref{eq:gradient-coords1}, we get the coordinate expression:
\begin{equation}
\label{eq:gradient-coords2}
\grad f
	= \sum_{\mu=1}^{n} \frac{1}{\theta_{\mu}''}
	\bigg[
	\frac{\pd\tilde f}{\pd x_{\mu}}
	- \Theta'' \sum_{\nu=1}^{n} \frac{1}{\theta_{\nu}''} \frac{\pd\tilde f}{\pd x_{\nu}}
	\bigg]
	\tilde\bvec_{\mu}.
\end{equation}
Accordingly, if the domain of $f\from\intstrat\to\R$ extends to an open neighborhood of $\intstrat$ (so $\pd_{\mu}\tilde f = \pd_{\mu} f - \pd_{0}f$ for all $x\in\intstrat$), some algebra readily gives:
\begin{equation}
\label{eq:gradient-coords}
\grad f
	= \sum_{\alpha=0}^{n} \frac{1}{\theta_{\alpha}''}
	\bigg[
	\frac{\pd f}{\pd x_{\alpha}}
	- \Theta'' \sum_{\beta=0}^{n} \frac{1}{\theta_{\beta}''} \frac{\pd f}{\pd x_{\beta}}
	\bigg]
	\bvec_{\alpha}.
\end{equation}

With regard to the inertial acceleration term of \eqref{eq:HBF}, taking the covariant derivative of $\dot x$ in the coordinate frame \eqref{eq:pi0} yields:
\begin{equation}
\label{eq:acceleration-coords1}
\frac{D^{2} x_{\mu}}{Dt^{2}}
	= \ddot x_{\mu}
	+ \sum_{\nu,\rho=1}^{n} \tilde\Gamma^{\mu}_{\nu\rho} \dot x_{\nu} \dot x_{\rho},
	\quad
	\mu =1,\dotsc,n,
\end{equation}
where the so-called \emph{Christoffel symbols} $\tilde\Gamma_{\nu\rho}^{\mu}$ of $g$ are given by:%
\footnote{For a more detailed discussion the reader is again referred to Appendix \ref{app:geometry}.
We only mention here that the covariant derivative in \eqref{eq:acceleration-coords1} is defined so that the system's energy $E(x,\dot x) = \frac{1}{2}\norm{\dot x}^{2} - u(x)$ is a constant of motion under \eqref{eq:HBF} when $\friction=0$ (and Lyapunov when $\friction>0$).}
\begin{equation}
\label{eq:Christoffel-2}
\tilde\Gamma_{\nu\rho}^{\mu}
	= \frac{1}{2} \sum_{\kappa=1}^{n} \tilde g^{\mu\kappa}
	\left(
	\frac{\pd \tilde g_{\kappa\nu}}{\pd x_{\rho}}
	+ \frac{\pd \tilde g_{\rho\kappa}}{\pd x_{\nu}}
	- \frac{\pd \tilde g_{\nu\rho}}{\pd x_{\kappa}}	
	\right).
\end{equation}
After a somewhat cumbersome calculation (cf. Appendix \ref{app:calculations}), we then get:
\begin{equation}
\label{eq:acceleration-coords2}
\frac{D^{2}x_{\mu}}{Dt^{2}}
	= \ddot x_{\mu}
	+ \frac{1}{2} \frac{\theta_{\mu}'''}{\theta_{\mu}''} \dot x_{\mu}^{2}
	- \frac{1}{2} \frac{\Theta''}{\theta_{\mu}''}
	\left[
	\sum_{\nu=1}^{n} \frac{\theta_{\nu}'''}{\theta_{\nu}''} \dot x_{\nu}^{2}
	+ \frac{\theta_{0}'''}{\theta_{0}''} \left(\sum_{\nu=1}^{n} \dot x_{\nu}\right)^{2}
	\right],
\end{equation}
so, with $\dot x_{0} = -\insum_{\nu=1}^{n} \dot x_{\nu}$, \eqref{eq:acceleration-coords1} becomes:
\begin{equation}
\label{eq:acceleration-coords}
\frac{D^{2}x_{\alpha}}{Dt^{2}}
	= \ddot x_{\alpha}
	+ \frac{1}{2} \frac{1}{\theta_{\alpha}''}
	\bigg[
	\theta_{\alpha}''' \dot x_{\alpha}^{2}
	- \sum_{\beta=0}^{n} \left(\Theta''\big/\theta_{\beta}''\right) \theta_{\beta}''' \dot x_{\beta}^{2}
	\bigg].
\end{equation}

In view of the above, putting together \eqref{eq:gradient-coords}, \eqref{eq:acceleration-coords} and \eqref{eq:HBF}, we obtain the \emph{inertial game dynamics}:
\begin{equation}
\label{eq:ID}
\tag{ID}
\begin{aligned}
\ddot x_{k\alpha}
	&= \frac{1}{\theta_{k\alpha}''}
	\bigg[
	\payv_{k\alpha} - \insum_{\beta}^{k} \left(\Theta_{k}''\big/\theta_{k\beta}''\right) \payv_{k\beta}
	\bigg]
	\notag\\
	&- \frac{1}{2} \frac{1}{\theta_{k\alpha}''}
	\bigg[
	\theta_{k\alpha}''' \dot x_{k\alpha}^{2} - \insum_{\beta}^{k} \left(\Theta_{k}''\big/\theta_{k\beta}''\right) \theta_{k\beta}''' \dot x_{k\beta}^{2}
	\bigg]
	-\friction \dot x_{k\alpha},
\end{aligned}
\end{equation}
where, in obvious notation, we have reinstated the player index $k$ and we have used the fact that $\payv_{k\alpha} = \frac{\pd\pay_{k}}{\pd x_{k\alpha}}$.
Since these dynamics comprise the main focus of our paper, we immediately proceed to two representative examples:

\smallskip

\begin{example}[The inertial replicator dynamics]
The Shahshahani kernel $\theta(x) = x\log x$ has $\theta''(x) = 1/x$ and $\theta'''(x) = -1/x^{2}$, so \eqref{eq:ID} leads to the \emph{inertial replicator dynamics}:
\begin{equation}
\tag{IRD}
\ddot x_{k\alpha}
	= x_{k\alpha}
	\left[
	\payv_{k\alpha} - \insum_{\beta}^{k} x_{k\beta} \payv_{k\beta}
	\right]
	+ \frac{1}{2} x_{k\alpha}
	\left[
	\dot x_{k\alpha}^{2}\big/ x_{k\alpha}^{2} - \insum_{\beta}^{k} \dot x_{k\beta}^{2}\big/x_{k\beta}
	\right]
	-\friction \dot x_{k\alpha}.
\end{equation}
As we mentioned in the introduction, the only notable difference between \eqref{eq:IRD} and the second-order replicator dynamics of exponential learning \eqref{eq:RD-2} is the factor $1/2$ in the RHS of \eqref{eq:IRD} (the friction term $\friction\dot x$ is not important for this comparison).
Despite the innocuous character of this scaling-like factor,%
\footnote{It is tempting to interpret the factor $1/2$ in \eqref{eq:IRD} as a change of time with respect to \eqref{eq:RD-2}, but the presence of $\dot x^{2}$ precludes as much.}
we shall see in the following section that \eqref{eq:IRD} and \eqref{eq:RD-2} behave in drastically different ways.
\end{example}

\begin{example}[The inertial log-barrier dynamics]
The log-barrier kernel $\theta(x) = -\log x$ has $\theta''(x) = 1/x^{2}$ and $\theta'''(x) = -2/x^{3}$, so we obtain the \emph{inertial log-barrier dynamics}:
\begin{equation}
\label{eq:ILD}
\tag{ILD}
\ddot x_{k\alpha}
	= x_{k\alpha}^{2}
	\left[
	\payv_{k\alpha} - r_{k}^{-2} \insum_{\beta}^{k} x_{k\beta}^{2} \payv_{k\beta}
	\right]
	+ x_{k\alpha}^{2}
	\left[
	\dot x_{k\alpha}^{2}\big/ x_{k\alpha}^{3} - r_{k}^{-2}\insum_{\beta}^{k} \dot x_{k\beta}^{2}\big/x_{k\beta}
	\right]
	- \friction \dot x_{k\alpha},
\end{equation}
where $r_{k}^{2} = \insum_{\beta}^{k} x_{k\beta}^{2}$.
The first order analogue of these dynamics \textendash\ namely, the system $\dot x_{k\alpha} = x_{k\alpha}^{2} \big(\payv_{k\alpha} - r_{k}^{-2} \insum_{k\beta} x_{k\beta}^{2} \payv_{k\beta} \big)$ \textendash\ has been studied extensively in the context of linear programming and convex optimization \cite{ABB04,BL89,BT03,Fia90,McC89}, while its game-theoretic properties are discussed in \cite{MS14}.
\end{example}

\section{Basic properties and well-posedness}
\label{sec:asymptotics}

In this section, we examine the energy dissipation and well-posedness properties of \eqref{eq:ID}.
For convenience, we will work with the single-agent version of the dynamics \eqref{eq:ID} with $\payv = \nabla\pot$ for some Lipschitz continuous and sufficiently smooth function $\pot$ on $\strat$.%
\footnote{Here and in what follows, it will be convenient to assume that $\pot$ is defined on an open neighborhood of $\strat$.
This assumption facilitates the use of standard coordinates for calculations, but none of our results depend on this device.}%

\subsection{Friction and Dissipation of Energy}
\label{sec:friction}

We begin by showing that the system's total energy
\begin{equation}
\label{eq:energy}
E(x,\dot x)
	= \frac{1}{2} \norm{\dot x}^{2} - \pot(x)
\end{equation}
is dissipated along the inertial dynamics \eqref{eq:ID} for $\friction>0$ (or is a constant of motion in the frictionless case $\friction=0$).

\begin{proposition}
\label{prop:dissipation}
The total energy $E(x,\dot x)$ is nonincreasing along any interior solution orbit of \eqref{eq:ID};
specifically:
\begin{equation}
\label{eq:dissipation}
\dot E
	= - 2\friction \kin
	= -\friction \norm{\dot x}^{2},
\end{equation}
where $\kin = \frac{1}{2}\norm{\dot x}^{2}$ is the system's kinetic energy.
\end{proposition}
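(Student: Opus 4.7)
The plan is to bypass the coordinate expression \eqref{eq:ID} entirely and verify \eqref{eq:dissipation} directly in the intrinsic Riemannian formulation \eqref{eq:HBF}. The two facts I would invoke are:
\begin{inparaenum}[\itshape a\upshape)]
\item compatibility of the Levi-Civita connection with the metric, which gives $\frac{d}{dt} g(\dot x,\dot x) = 2\, g(D\dot x/Dt,\dot x)$ along any smooth curve; and
\item the defining chain-rule property of the Riemannian gradient, $\frac{d}{dt}\pot(x(t)) = g(\grad\pot(x),\dot x)$.
\end{inparaenum}
Both properties are recalled in Appendix \ref{app:geometry}.

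Concretely, I would first differentiate the kinetic energy $\kin = \tfrac{1}{2}\norm{\dot x}^{2}$ along a solution orbit, reading the norm as the Riemannian norm $g(\dot x,\dot x)^{1/2}$ induced by the \ac{HR} metric. Metric compatibility then yields $\dot\kin = g(D^{2}x/Dt^{2},\dot x)$. Second, the gradient chain-rule gives $\dot\pot = g(\grad\pot,\dot x)$. Substituting the dynamics \eqref{eq:HBF}, namely $D^{2}x/Dt^{2} = \grad\pot - \friction\dot x$, into $\dot E = \dot\kin - \dot\pot$ collapses the two gradient terms and leaves
\begin{equation*}
\dot E \;=\; g\bigl(\grad\pot - \friction\dot x,\;\dot x\bigr) - g(\grad\pot,\dot x) \;=\; -\friction\, g(\dot x,\dot x) \;=\; -\friction \norm{\dot x}^{2},
\end{equation*}
which is exactly \eqref{eq:dissipation}.

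There is no substantive obstacle: the argument is a two-line intrinsic computation, and its only subtlety is the implicit convention that the norm in \eqref{eq:energy} is to be understood as the Riemannian norm, since that is the object for which both metric compatibility and the gradient chain-rule are tailored. A brute-force alternative, which I would avoid, would be to plug \eqref{eq:ID} directly into the chart expression of $\dot E$ under \eqref{eq:pi0} and then collect all the terms carrying $\theta''_{\alpha}$ and $\theta'''_{\alpha}$ using the inversion identity for $\tilde g^{\mu\nu}$ from Lemma \ref{lem:inversion}; the intrinsic derivation dispenses with this bookkeeping and, as a bonus, makes transparent that the dissipation identity is independent of the separable structure \eqref{eq:penalty} of the \ac{HR} kernel and thus extends verbatim to non-separable \ac{HR} metrics.
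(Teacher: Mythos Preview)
Your proposal is correct and is essentially identical to the paper's own proof: the paper also works intrinsically via \eqref{eq:HBF}, invoking metric compatibility \eqref{eq:compatibility} to get $\dot\kin = \product{D^{2}x/Dt^{2}}{\dot x}$ and the gradient duality \eqref{eq:diff-grad} for $\dot\pot$, then substitutes $D^{2}x/Dt^{2} = \grad\pot - \friction\dot x$ to obtain $\dot E = -\friction\norm{\dot x}^{2}$. Your closing remark that the argument is independent of the separable structure \eqref{eq:penalty} is accurate and a nice bonus observation.
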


\begin{proof}
By differentiating \eqref{eq:energy} along $\dot x(t)$, we readily obtain:
\begin{flalign}
\dot E
	& = \del_{\dot x} E
	= \tfrac{1}{2} \del_{\dot x} \product{\dot x}{\dot x}
	- \del_{\dot x} \pot
	= \product{\del_{\dot x} \dot x}{\dot x}
	- \braket{d\pot}{\dot x}
	\notag\\
	& = \product{\frac{D^{2}x}{Dt^{2}}}{\dot x}
	- \product{\grad\pot}{\dot x}
	= \product{\grad\pot - \friction \dot x}{\dot x}
	- \product{\grad\pot}{\dot x}
	\notag\\
	& = -\friction \product{\dot x}{\dot x}
	= - 2 \friction \kin,
\end{flalign}
where we used the metric compatibility \eqref{eq:compatibility} of $\del$ in the first line, and the definition of the dynamics \eqref{eq:ID} in the second.
\hfill
\end{proof}

Proposition \ref{prop:dissipation} shows that, for $\friction>0$, the system's total energy $E = \kin - \pot$ is a Lyapunov function for \eqref{eq:ID};
by contrast, in first-order \ac{HR} gradient flows \cite{ABB04,BT03}, it is the maximization objective $\pot$ that acts as a Lyapunov function.
As such, in the second-order context of \eqref{eq:ID}, it will be important to show that the system's kinetic energy eventually vanishes \textendash\ so that $\pot$ becomes an ``asymptotic'' Lyapunov function.
To that end, we have:

\begin{proposition}
\label{prop:stop}
Let $x(t)$ be a solution trajectory of \eqref{eq:ID} that is defined for all $t\geq0$.
If $\friction>0$, then $\lim_{t\to\infty} \dot x(t) = 0$.
\end{proposition}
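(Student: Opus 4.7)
The plan is to combine the energy dissipation identity from Proposition \ref{prop:dissipation} with Barbalat's lemma. From \eqref{eq:dissipation}, integrating over $[0,t]$ gives
\[
\friction \int_{0}^{t} \norm{\dot x(s)}^{2} \dd s
    = E(x(0),\dot x(0)) - E(x(t),\dot x(t)).
\]
Because $\pot$ is continuous on the compact set $\strat$, it is bounded, and the definition \eqref{eq:energy} of $E$ together with $\kin\geq0$ shows that $E$ is bounded below by $-\sup_{\strat}\pot$. Since $E$ is also nonincreasing (by Proposition \ref{prop:dissipation}), it converges as $t\to\infty$, and hence $\int_{0}^{\infty}\norm{\dot x(s)}^{2}\dd s < \infty$ (all norms understood in the Riemannian sense induced by $g$). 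As a byproduct, $\kin(t) = E(t) + \pot(x(t))$ is uniformly bounded, so $\norm{\dot x(t)}$ is bounded along the trajectory.

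The next step is to upgrade square-integrability to pointwise convergence via Barbalat's lemma, for which I need $\norm{\dot x(t)}^{2}$ to be uniformly continuous. Using the metric compatibility of the Levi-Civita connection and the equation of motion \eqref{eq:HBF}, I compute
\[
\tfrac{d}{dt}\norm{\dot x}^{2}
    = 2\product{\tfrac{D^{2}x}{Dt^{2}}}{\dot x}
    = 2\braket{d\pot}{\dot x} - 2\friction\norm{\dot x}^{2}.
\]
The second term is bounded because $\norm{\dot x}$ is bounded. For the first, since $\pot$ is $C^{1}$ on a neighborhood of the compact set $\strat$, the Euclidean differential $d\pot$ is uniformly bounded, and the pairing $\smallbraket{d\pot}{\dot x}$ is then controlled by the Euclidean norm of $\dot x$. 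Here I use the structural property of \acl{HR} metrics: since $\theta''>0$ and $\theta'''<0$, the function $\theta''$ is strictly decreasing on $[0,1]$, hence bounded below by $\theta''(1)>0$, so
\[
\norm{\dot x}_{\mathrm{Eucl}}^{2}
    = \insum_{\alpha}\dot x_{\alpha}^{2}
    \leq \theta''(1)^{-1}\insum_{\alpha}\theta''_{\alpha}\dot x_{\alpha}^{2}
    = \theta''(1)^{-1}\norm{\dot x}^{2}.
\]
Consequently $\smash{\tfrac{d}{dt}}\norm{\dot x}^{2}$ is uniformly bounded, and $\norm{\dot x(t)}^{2}$ is Lipschitz, a fortiori uniformly continuous, on $[0,\infty)$.

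Barbalat's lemma now applies: the finiteness of $\int_{0}^{\infty}\norm{\dot x(s)}^{2}\dd s$ together with uniform continuity of $\norm{\dot x(t)}^{2}$ forces $\norm{\dot x(t)}\to 0$. Finally, invoking the same lower bound $\theta''\geq\theta''(1)$ yields $\norm{\dot x(t)}_{\mathrm{Eucl}}\to 0$, which is the claimed conclusion $\dot x(t)\to 0$.

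The only genuinely delicate point is the uniform continuity step, because a priori the metric coefficients $\theta_\alpha''(x(t))$ could in principle degenerate along the trajectory; the saving observation is the monotonicity of $\theta''$ coming from the Legendre condition \eqref{eq:kernel}, which ensures a uniform two-sided comparison between the Riemannian and Euclidean norms on $\strat$, and simultaneously bounds $\norm{\grad\pot}_{g}$. Everything else is a routine application of energy arguments and Barbalat.
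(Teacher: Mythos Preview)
Your proof is correct and follows essentially the same route as the paper: both derive integrability of $K = \tfrac{1}{2}\norm{\dot x}^{2}$ from energy dissipation, bound $\dot K$ via the lower bound $\theta'' \geq \inf_{(0,1)}\theta'' > 0$ coming from the Legendre condition, and conclude $K \to 0$. The only cosmetic difference is that you invoke Barbalat's lemma after obtaining a two-sided bound on $\dot K$ (using the additional observation that $K$ itself is bounded), whereas the paper proves the needed implication by hand via Lemma~\ref{lem:kin-bound} and an explicit contradiction argument that requires only an upper bound on $\dot K$.
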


To prove Proposition \ref{prop:stop}, we will require the following intermediate result:

\begin{lemma}
\label{lem:kin-bound}
Let $x(t)$ be an interior solution of \eqref{eq:ID} that is defined for all $t\geq0$.
If $\friction>0$, the rate of change of the system's kinetic energy is bounded from above for all $t\geq0$.
\end{lemma}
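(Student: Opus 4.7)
The plan is to differentiate the identity $E = \kin - \pot$ and use Proposition \ref{prop:dissipation} to isolate $\dot\kin$. Explicitly,
\begin{equation*}
\dot\kin \;=\; \dot E + \dot\pot \;=\; -2\friction\kin + \dot\pot,
\end{equation*}
so, since $\friction>0$ and $\kin\geq 0$, the first summand is nonpositive, and it suffices to bound $\dot\pot = \sum_\alpha \partial_\alpha\pot(x)\,\dot x_\alpha$ uniformly from above along the orbit.

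The required bound comes from two preliminary estimates. First, by Proposition \ref{prop:dissipation} the total energy $E$ is nonincreasing, so
\begin{equation*}
\kin(t) \;=\; E(t) + \pot(x(t)) \;\leq\; E(0) + \max_{x\in\strat}\pot(x),
\end{equation*}
using compactness of $\strat$ and continuity of $\pot$. Since $2\kin = \sum_\alpha\theta''(x_\alpha)\,\dot x_\alpha^2$ is the Riemannian squared speed, this shows that the Riemannian speed is uniformly bounded along the trajectory. Second, I would transfer this into a bound on the ambient coordinate speed: by the Legendre condition $\theta'''<0$, $\theta''$ is strictly decreasing on $(0,+\infty)$, and since every coordinate $x_\alpha$ of an interior point of $\strat$ lies in $(0,1)$, one has $\theta''(x_\alpha)\geq\theta''(1)>0$. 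Plugging this in yields $\sum_\alpha \dot x_\alpha^2 \leq 2\kin/\theta''(1)$, and in particular a uniform bound on each $|\dot x_\alpha|$.

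Cauchy-Schwarz then gives $\dot\pot \leq \bigl(\sum_\alpha(\partial_\alpha\pot)^2\bigr)^{1/2}\bigl(\sum_\alpha \dot x_\alpha^2\bigr)^{1/2}$: the first factor is bounded by the assumed Lipschitz continuity of $\pot$ on the compact set $\strat$, the second by the previous step. Combined with $\dot\kin \leq \dot\pot$, this produces the desired constant upper bound $M$ on $\dot\kin$, depending only on $E(0)$, $\theta''(1)$, and the Lipschitz constant of $\pot$. The only genuinely geometric step is this Riemannian-to-Euclidean comparison of $\dot x$, which rests on $\theta'''<0$; everything else is bookkeeping around the identity $\dot\kin = -2\friction\kin + \dot\pot$.
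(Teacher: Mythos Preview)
Your argument is correct. Both you and the paper start from the identity
\[
\dot\kin \;=\; \braket{d\pot}{\dot x} - \friction\,\norm{\dot x}^{2} \;=\; \dot\pot - 2\friction\kin,
\]
but you then diverge. You discard the term $-2\friction\kin$ and bound $\dot\pot$ on its own, which forces you to first cap $\kin$ via energy dissipation (Proposition~\ref{prop:dissipation}) so as to control $\sum_\alpha\dot x_\alpha^2$. The paper instead \emph{keeps} the friction term and exploits it directly: writing $\dot\kin \leq A\sum_\beta|\dot x_\beta| - \friction B\sum_\beta\dot x_\beta^2$ with $A=\sup|\pd_\beta\pot|$ and $B=\inf_{(0,1)}\theta''$, one simply maximizes this quadratic in the $|\dot x_\beta|$ to obtain the explicit bound $(n{+}1)A^2/(4\friction B)$. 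The paper's route is shorter and yields a bound depending only on the data $(\pot,\theta,\friction)$, whereas your bound also depends on the initial energy $E(0)$; for the purposes of Proposition~\ref{prop:stop} this makes no difference, so either argument suffices. The Riemannian-to-Euclidean comparison via $\theta''(x_\alpha)\geq\theta''(1)$ that you flag is the same observation the paper uses (their $B>0$), and it is indeed where $\theta'''<0$ enters.
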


\begin{proof}
By differentiating $\kin$ with respect to time, we readily obtain:
\begin{flalign}
\label{eq:kin-estimate}
\dot\kin
	&= \del_{\dot x} \kin
	= \product{\frac{D^{2}x}{Dt^{2}}}{\dot x}
	= \product{\grad\pot - \friction \dot x}{\dot x}
	\notag\\
	&= \braket{d\pot}{\dot x} - \friction \norm{\dot x}^{2}
	= \insum_{\beta} \frac{\pd\pot}{\pd x_{\beta}} \dot x_{\beta}
	- \friction \insum_{\beta} \theta_{\beta}'' \dot x_{\beta}^{2}
	\notag\\
	&\leq A \insum_{\beta} |\dot x_{\beta}| - \friction B \insum_{\beta} \dot x_{\beta}^{2},
\end{flalign}
where $A = \sup \abs{\pd_{\beta}\pot} < \infty$ and $B = \inf\{\theta''(x): x\in(0,1)\}$.
With $A$ finite and $B>0$ (on account of the Legendre properties of $\theta$), the maximum value of the above expression is $(n+1) A^{2}/(4\friction B)$, so $\dot\kin$ is bounded from above.
\hfill
\end{proof}

\begin{proofof}{Proof of Proposition \ref{prop:stop}}
Let $E(t) = \tfrac{1}{2} \norm{\dot x(t)}^{2} - \pot(x(t))$ be the system's energy at time $t$.
Proposition \ref{prop:dissipation} shows that $\dot E = -\friction \norm{\dot x}^{2} = -2\friction\kin\leq0$, so $E(t)$ decreases to some value $E^{\ast}\in\R$;
as a result, we also get $\int_{0}^{\infty} \kin(s) \dd s = (2\friction)^{-1} \left(E(0) - E^{\ast}\right)<\infty$.
This suggests that $\lim_{t\to\infty} \kin(t) = 0$, but since there exist positive integrable functions which do not converge to $0$ as $t\to\infty$, our assertion does not yet follow.

Assume therefore that $\limsup_{t\to\infty} \kin(t) = 3\eps >0$.
In that case, there exists by continuity an increasing sequence of times $t_{n}\to\infty$ such that $\kin(t_{n})>2\eps$ for all $n$.
Accordingly, let $s_{n} = \sup\{t: t\leq t_{n} \text{ and } \kin(t) <\eps\}$:
since $\kin$ is integrable and non-negative, we also have $s_{n}\to\infty$ (because $\liminf \kin(t) = 0$),
so, by descending to a subsequence of $t_{n}$, we may assume without loss of generality that $s_{n+1} > t_{n}$ for all $n$.
Hence, if we let $J_{n}=[s_{n},t_{n}]$, we have:
\begin{equation}
\int_{0}^{\infty} \kin(s) \dd s
	\geq \insum_{n=1}^{\infty} \int_{J_{n}} \kin
	\geq \eps \insum_{n=1}^{\infty} |J_{n}|,
\end{equation}
which shows that the Lebesgue measure $|J_{n}|$ of $J_{n}$ vanishes as $t\to\infty$.
Consequently, by the mean value theorem, it follows that there exists $\xi_{n}\in(s_{n},t_{n})$ such that
\begin{equation}
\dot \kin(\xi_{n})
	= \frac{\kin(t_{n}) - \kin(s_{n})}{|J_{n}|}
	> \frac{\eps}{|J_{n}|},
\end{equation}
and since $|J_{n}|\to 0$, we conclude that $\limsup \dot \kin(t) = \infty$.
This contradicts the conclusion of Lemma \ref{lem:kin-bound}, so we get $K(t)\to0$ and $\dot x(t)\to0$.
\hfill
\end{proofof}

\begin{remark}
The proof technique above easily extends to the Euclidean case, thus providing an alternative proof of the velocity integrability and convergence part of Theorem 2.1 in \cite{Alv00};
furthermore, if we consider a Hessian-driven damping term in \eqref{eq:ID} as in \cite{AABR02}, the estimate \eqref{eq:kin-estimate} remains essentially unchanged and our approach may also be used to prove the corresponding claim of Theorem 2.1 in \cite{AABR02}.
\end{remark}

\subsection{Well-posedness and Euclidean coordinates}
\label{sec:wp}

Clearly, Proposition \ref{prop:stop} applies if and only if the trajectory in question exists for all time, so it is crucial to determine whether the dynamics \eqref{eq:ID} are well-posed.
To that end, we will begin with the inertial replicator dynamics \eqref{eq:IRD} in the simple, baseline case $\strat=[0,1]$, $\pot=0$, which corresponds to a single player with two twin actions \textendash\ say $\act=\{0,1\}$ with $\payv_{0} = \payv_{1} = 0$.
Setting $x=x_{1} = 1-x_{0}$ in \eqref{eq:IRD}, we then get the second-order ODE:
\begin{equation}
\label{eq:IRD-simple}
\ddot x
	= \frac{1}{2} x \left(\dot x\big/ x^{2} - \dot x^{2} \big/ x - \dot x^{2}\big/(1-x)\right)
	= \frac{1}{2} \frac{1-2x}{x(1-x)} \dot x^{2}.
\end{equation}
To solve this equation, let $\xi = 2\sqrt{x}$ and $\vel = \dot \xi$;
after some algebra, we obtain the separable equation
\begin{equation}
\label{eq:IRD-transformed}
\frac{d\vel}{\vel}
	= -\frac{\xi}{4 - \xi^{2}} \dd \xi,
\end{equation}
which, after integrating, further reduces to:
\begin{equation}
\label{eq:xi-IRD}
\dot \xi
	= \vel
	= \frac{\vel_{0}}{4 - \xi_{0}^{2}} \sqrt{4 - \xi^{2}},
\end{equation}
with $\xi_{0} = \xi(0)$ and $\vel_{0} = \vel(0) = \dot\xi(0)$.
Some more algebra then yields the solution
\begin{equation}
\label{eq:oscillation}
\xi(t)
	= \xi_{0} \cos \frac{\vel_{0}t}{\sqrt{4 - \xi_{0}^{2}}}
	+ \sqrt{4 - \xi_{0}^{2}} \sin \frac{\vel_{0}t}{\sqrt{4-\xi_{0}^{2}}}.
\end{equation}
From the above, we see that $\xi(t)$ becomes negative in finite time for every interior initial position $\xi_{0}\in(0,2)$ and for all $\vel_{0}\in\R$.
However, since $\xi(t) = 2\sqrt{x(t)}$ by definition, this can only occur if $x(t)$ exits $(0,1)$ in finite time;
as a result, we conclude that the inertial replicator dynamics \eqref{eq:IRD} may fail to be well-posed, even in the simple case of the zero game.

On the other hand, a similar calculation for the inertial log-barrier dynamics \eqref{eq:ILD} yields the equation:
\begin{equation}
\label{eq:ILD-simple}
\ddot x
	= \dot x^{2} \frac{(1-2x)(1-x+x^{2})}{x(1-x)(1-2x+2x^{2})},
\end{equation}
which, after the change of variables $\xi = \log x<0$ (recall that $0<x<1$), becomes:
\begin{equation}
\ddot\xi
	= - \dot\xi^{2} \frac{e^{2\xi}}{(1-e^{\xi}) ( e^{2\xi} + (1-e^{\xi})^{2} \big)}.
\end{equation}
Setting $\vel = \dot\xi$ and separating as before, we then obtain the equation:
\begin{equation}
\label{eq:xi-ILD}
\dot\xi
	= C \frac{1-e^{\xi}}{\sqrt{1 - 2e^{\xi} + 2e^{2\xi}}},
\end{equation}
where $C\in\R$ is an integration constant.
Contrary to \eqref{eq:xi-ILD}, the RHS of \eqref{eq:xi-ILD} is Lipschitz and bounded for $\xi<0$ (and vanishes at $\xi=0$), so the solution $\xi(t)$ exists for all time;
as a result, we conclude that the simple system \eqref{eq:ILD-simple} is well-posed.

The fundamental difference between \eqref{eq:IRD-simple} and \eqref{eq:ILD-simple} is that the image of $(0,1)$ under the change of variables $x\mapsto 2\sqrt{x}$ is a bounded set whereas the image of the transformation $x\mapsto \log x$ is the (unbounded) half-line $\xi<0$:
consequently, the solutions of \eqref{eq:xi-IRD} escape from the image of $(0,1)$ in finite time, whereas the solutions of \eqref{eq:xi-ILD} remain contained therein for all $t\geq0$.
As we show below, this is a special case of a more general geometric principle which characterizes those \ac{HR} structures that lead to well-posed dynamics.

Our first step will be to construct a Euclidean equivalent of the dynamics \eqref{eq:IRD} by mapping $\intstrat$ isometrically in an ambient Euclidean space.
To that end, let $g$ be a Riemannian metric on the open orthant $\orthalt \equiv \orthant^{n+1}$ of $V \equiv \R^{n+1}$ and assume there exists a sufficiently smooth strictly convex function $\psi\from\orthalt\to\R$ such that:%
\footnote{We thank an anonymous reviewer for suggesting this synthetic approach.}
\begin{equation}
\label{eq:transform}
g
	= \hess(\psi)^{2}.
\end{equation}
Then, the derivative map $G\from\orthalt\to V$, $x\mapsto G(x) \equiv \nabla\psi(x)$, is
\begin{inparaenum}
[\itshape a\upshape)]
\item
injective (as the derivative of a strictly convex function);
and
\item
an immersion (since $\hess(\psi) \succ 0$).
\end{inparaenum}

Assume now that the target ambient space $V$ is endowed with the Euclidean metric $\delta(\bvec_{\alpha},\bvec_{\beta}) = \delta_{\alpha\beta}$;
we then claim that $G\from(\orthalt,g) \to (V,\delta)$ is an \emph{isometry}, i.e.
\begin{equation}
\label{eq:isometry}
	g(\bvec_{\alpha},\bvec_{\beta})
	= \delta(G_{\ast}\bvec_{\alpha},G_{\ast}\bvec_{\beta})
	\quad
	\text{for all $\alpha,\beta = 0,1,\dotsc,n$,}
\end{equation}
where $G_{\ast}\bvec_{\alpha}$ denotes the push-forward of $\bvec_{\alpha}$ under $G$:
\begin{equation}
\label{eq:pushforward}
G_{\ast}\bvec_{\alpha}
	= \sum_{\gamma=0}^{n} \frac{\pd G_{\gamma}}{\pd x_{\alpha}} \bvec_{\gamma}
	= \sum_{\gamma=0}^{n} \frac{\pd^{2}\psi}{\pd x_{\alpha}\pd x_{\gamma}} \bvec_{\gamma}
	\quad
	\text{$\alpha = 0,1,\dotsc,n$.}
\end{equation}
Indeed, substituting \eqref{eq:pushforward} in \eqref{eq:isometry} yields:
\begin{flalign}
\label{eq:isometry-calc}
\delta(G_{\ast}\bvec_{\alpha},G_{\ast}\bvec_{\beta})
	&= \sum_{\gamma,\kappa=0}^{n} \frac{\pd^{2}\psi}{\pd x_{\alpha} \pd x_{\gamma}} \frac{\pd^{2}\psi}{\pd x_{\beta} \pd x_{\kappa}}
	\delta(\bvec_{\gamma},\bvec_{\kappa})
	= \sum_{\gamma=0}^{n} \frac{\pd^{2}\psi}{\pd x_{\alpha} \pd x_{\gamma}} \frac{\pd^{2}\psi}{\pd x_{\beta} \pd x_{\gamma}}
	\notag\\
	&= \hess(\psi)_{\alpha\beta}^{2}
	= g_{\alpha\beta},
\end{flalign}
so we have established the following result:

\begin{proposition}
\label{prop:isometry}
Let $g$ be a Riemannian metric on the positive open orthant $\orthalt$ of $V=\R^{n+1}$.
If $g = \hess(\psi)^{2}$ for some smooth function $\psi\from\orthalt\to\R$, the derivative map $G=\nabla\psi$ is an isometric injective immersion of $(\orthalt,g)$ in $(V,\delta)$.
\end{proposition}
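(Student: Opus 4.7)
The proposition bundles three independent assertions—injectivity, the immersion property, and the isometry identity—so I would verify each separately using the hypothesis $g=\hess(\psi)^{2}$ together with the standing assumption (stated right before the proposition) that $\psi$ is strictly convex on $\orthalt$.

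I would start with the isometry, since that is the heart of the statement and reduces to essentially a one-line matrix computation. The pushforward of a canonical basis vector under $G=\nabla\psi$ is
\[
G_{\ast}\bvec_{\alpha}
  = \sum_{\gamma=0}^{n} \frac{\pd G_{\gamma}}{\pd x_{\alpha}}\,\bvec_{\gamma}
  = \sum_{\gamma=0}^{n} \frac{\pd^{2}\psi}{\pd x_{\alpha}\pd x_{\gamma}}\,\bvec_{\gamma},
\]
so, using symmetry of the Hessian, the Euclidean inner product of two such pushforwards equals $\sum_{\gamma}(\hess\psi)_{\alpha\gamma}(\hess\psi)_{\beta\gamma} = (\hess\psi)^{2}_{\alpha\beta} = g_{\alpha\beta}$. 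Bilinearity then promotes this identity on basis vectors to all tangent vectors, which is exactly the isometry condition $g(u,v)=\delta(G_{\ast}u,G_{\ast}v)$.

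For the immersion property, the Jacobian of $G=\nabla\psi$ at $x$ is precisely $\hess(\psi)(x)$. Since $g=\hess(\psi)^{2}$ is Riemannian (hence positive definite) everywhere, $\hess(\psi)$ is in particular invertible at every point of $\orthalt$, and combined with strict convexity (which gives $\hess(\psi)\succeq0$) this upgrades to $\hess(\psi)\succ0$; so $DG$ has full rank everywhere and $G$ is an immersion. For injectivity, I would invoke the standard strict-monotonicity property of gradients of strictly convex functions, $\smallproduct{\nabla\psi(x)-\nabla\psi(y)}{x-y}>0$ for $x\neq y$, which rules out $G(x)=G(y)$ whenever $x\neq y$ in $\orthalt$.

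No step of this plan is particularly delicate; if there is any subtlety at all, it lies in the small interplay needed in the immersion step—namely that ``$\hess(\psi)^{2}$ positive definite plus $\psi$ convex'' forces $\hess(\psi)$ itself to be positive definite (not merely invertible with possibly mixed signature). This follows at once from the fact that a convex function has positive semidefinite Hessian, so combining with pointwise invertibility gives strict positivity of every eigenvalue. With that point fixed, the remaining verifications are mechanical.
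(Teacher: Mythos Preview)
Your proposal is correct and follows essentially the same approach as the paper: the paper likewise deduces injectivity from strict convexity of $\psi$, the immersion property from $\hess(\psi)\succ 0$, and the isometry identity from the same pushforward computation $\delta(G_{\ast}\bvec_{\alpha},G_{\ast}\bvec_{\beta}) = \sum_{\gamma}(\hess\psi)_{\alpha\gamma}(\hess\psi)_{\beta\gamma} = g_{\alpha\beta}$. If anything, you are slightly more explicit than the paper in justifying why $\hess(\psi)$ is genuinely positive definite (rather than merely nonsingular), but the argument is otherwise identical.
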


\smallskip

As it turns out, in the context of \ac{HR} metrics generated by a kernel function $\theta$, $G$ is actually an isometric \emph{embedding} of $(\orthalt,g)$ in $(V,\delta)$ and it can be calculated by a simple, explicit recipe.%
\footnote{Recall that an embedding is an injective immersion that is homeomorphic onto its image \cite{Lee03}.
The existence of isometric embeddings is a consequence of the celebrated Nash\textendash Kuiper embedding theorem;
however, Nash\textendash Kuiper does not provide an explicit construction of such an embedding.}
To do so, let $\phi\from(0,+\infty)\to\R$ be defined as:
\begin{equation}
\label{eq:EC-kernel}
\phi''(x)
	= \sqrt{\theta''(x)},
\end{equation}
and consider the coordinate transformation:
\begin{equation}
\label{eq:EC}
\tag{EC}
x_{\alpha}
	\mapsto \xi_{\alpha}
	= \phi'(x_{\alpha}),
	\quad
	\text{$\alpha=0,1,\dotsc,n$.}
\end{equation}
Letting $\psi(x) = \sum_{\alpha=0}^{n} \phi(x_{\alpha})$, it follows immediately that the derivative map $G=\nabla\psi$ of $\psi$ is closed (i.e. it maps closed sets to closed sets), so the transformation \eqref{eq:EC} is a homeomorphism onto its image, and hence an isometric embedding of $(\orthalt,g)$ in $(\R^{n+1},\delta)$ by Proposition \ref{prop:isometry}.

By this token, the variables $\xi_{\alpha}$ of \eqref{eq:EC} will be referred to as \emph{Euclidean coordinates} for $(\orthalt,g)$.
In these coordinates, the image of $\intstrat$ is the $n$-dimensional hypersurface
\begin{equation}
\label{eq:confold}
\txs
S
	= G(\intstrat)
	= \big\{ \xi\in\R^{n+1}: \insum_{\alpha=0}^{n} (\phi')^{-1}(\xi_{\alpha})=1 \big\},
\end{equation}
so \eqref{eq:ID} can be seen equivalently as a classical mechanical system evolving on $S$.
Specifically, \eqref{eq:EC} yields $\dot\xi_{\alpha} = \phi''(x_{\alpha}) \dot x_{\alpha} = \sqrt{\theta_{\alpha}''} \dot x_{\alpha}$ and $\ddot \xi_{\alpha} = \theta_{\alpha}'''\big/(2\sqrt{\theta_{\alpha}''}) \dot x_{\alpha}^{2} + \sqrt{\theta_{\alpha}''} \ddot x_{\alpha}$, so, after some algebra, we obtain the following expression for the inertial dynamics \eqref{eq:ID} in Euclidean coordinates:
\begin{equation}
\label{eq:ID-E}
\tag{ID-E}
\ddot \xi_{\alpha}
	= \frac{1}{\sqrt{\theta_{\alpha}''}} \left[
	\payv_{\alpha}
	- \insum_{\beta} \left(\Theta''\big/\theta_{\beta}''\right) \payv_{\beta}
	\right]
	+\frac{1}{2} \frac{1}{\sqrt{\theta_{\alpha}''}} \insum_{\beta} \Theta''\theta_{\beta}'''\big/(\theta_{\beta}'')^{2} \dot\xi_{\beta}^{2}
	-\friction \dot \xi_{\alpha}.
\end{equation}
In this way, \eqref{eq:ID-E} represents a classical ``heavy ball'' moving on the hypersurface $S$ under the potential field $\pot$:
the first term of \eqref{eq:ID-E} is simply the projection of the driving force $F = \grad\pot$ on $S$,
the second term is the so-called ``contact force'' which keeps the particle on $S$,
and the third term of \eqref{eq:ID-E} is simply the friction.

\begin{figure}[t]
\centering
\subfigure[The inertial replicator dynamics \eqref{eq:IRD}.]{
\includegraphics[width=.45\textwidth]{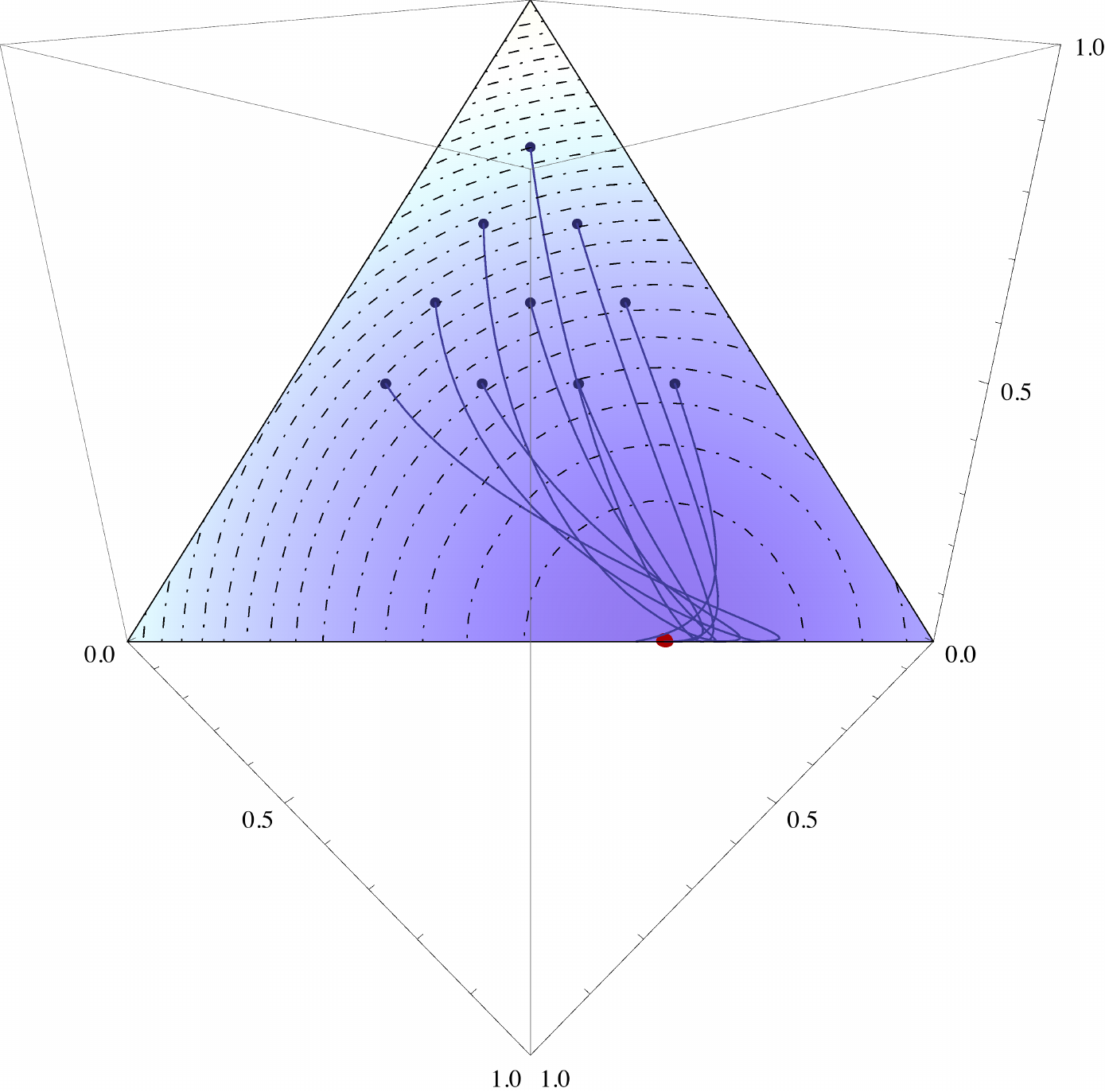}}
\hfill
\subfigure[Euclidean equivalent of \eqref{eq:IRD}.]{
\includegraphics[width=.45\textwidth]{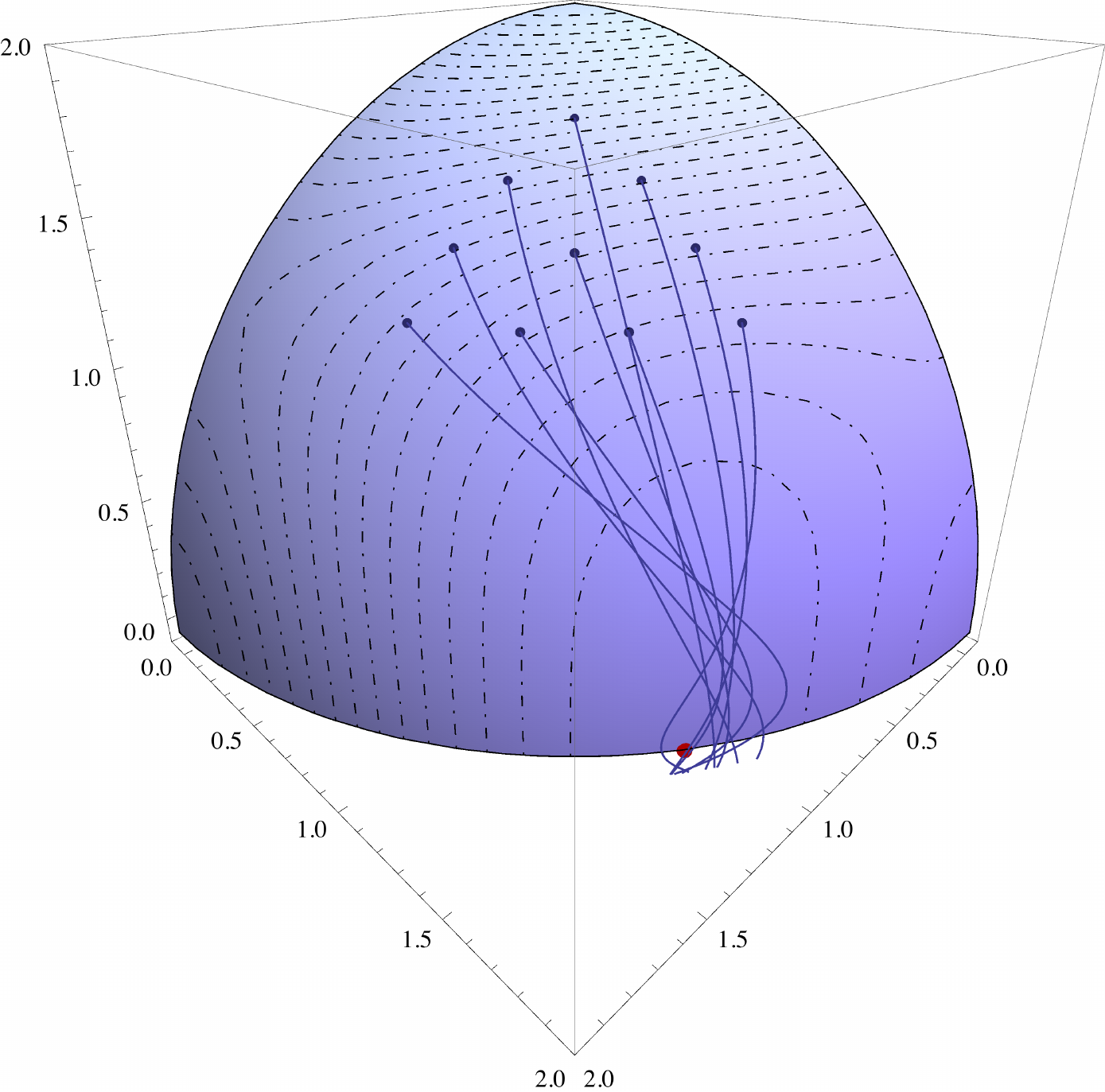}} 
\hfill
\\[1ex]
\subfigure[The inertial log-barrier dynamics \eqref{eq:ILD}.]{
\includegraphics[width=.45\textwidth]{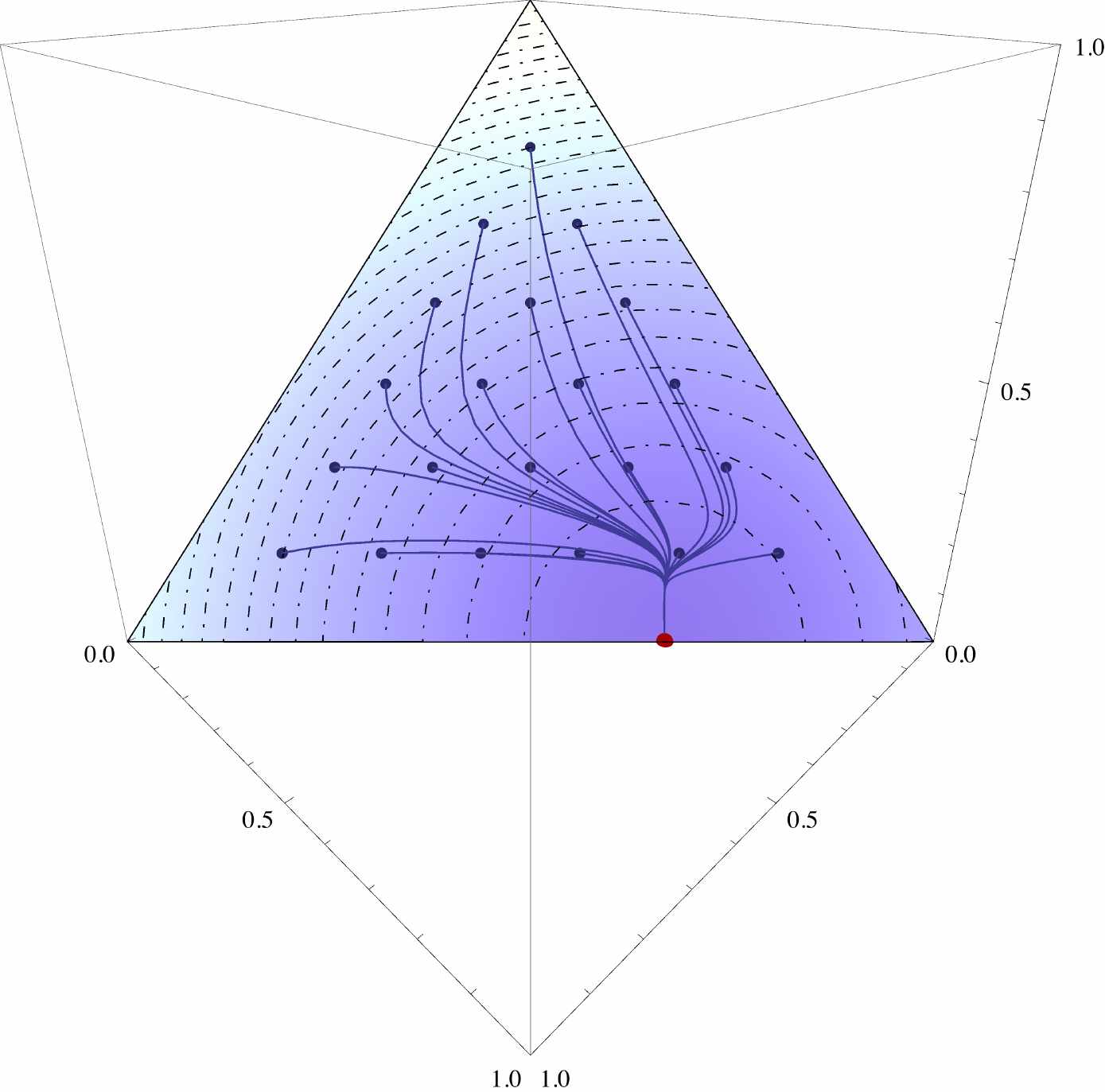}}
\hfill
\subfigure[Euclidean equivalent of \eqref{eq:ILD}.]{
\includegraphics[width=.45\textwidth]{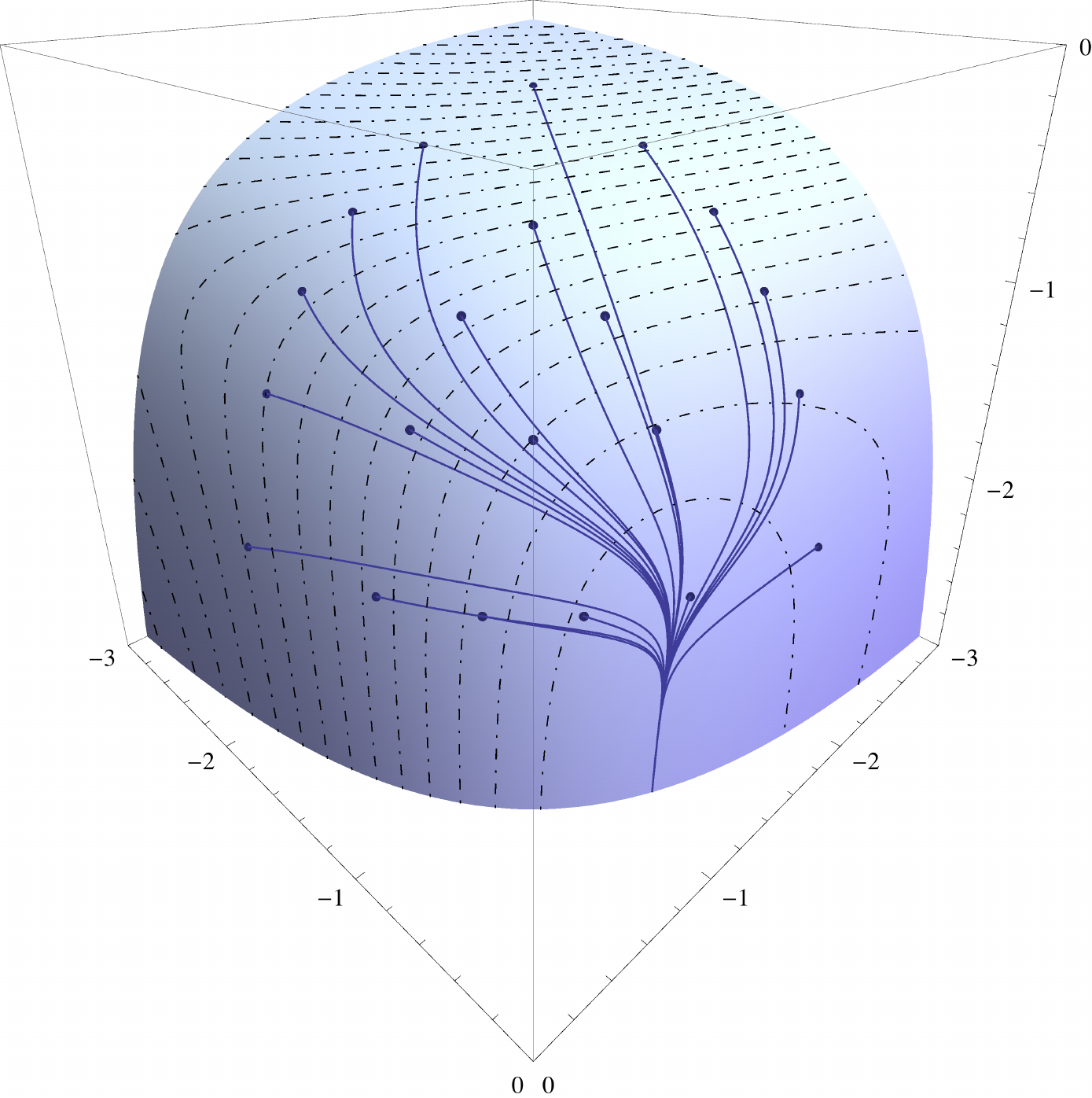}}
\hfill
\\
\caption{%
Asymptotic behavior of the inertial dynamics \eqref{eq:ID} and their Euclidean presentation \eqref{eq:ID-E} for the Shahshahani metric $g^{S}$ \textup(top\textup) and the log-barrier metric $g^{L}$ \textup(bottom\textup) \textendash\ cf.~\eqref{eq:Shah} and \eqref{eq:log} respectively.
The surfaces to the right depict the isometric image \eqref{eq:confold} of the simplex in $\R^{n+1}$ and the contours represent the level sets of the objective function $\pot\from\R^{3}\to\R$ with $\pot(x,y,z) = 1 - (x-2/3)^{2} - (y-1/3)^{2} - z^{2}$.
As can be seen in Figs.~\textup(a\textup) and \textup(b\textup), the inertial replicator dynamics \eqref{eq:IRD} collide with the boundary $\bd(\strat)$ of $\strat$ in finite time and thus fail to maximize $\pot$;
on the other hand, the solution orbits of \eqref{eq:ILD} converge globally to global maximum point of $\pot$.}
\label{fig:wp}
\end{figure}

This reformulation of \eqref{eq:ID} will play an important part in our well-posedness analysis, so we discuss two representative examples:

\begin{example}
\label{ex:EC-S}
In the case of the Shahshahani metric \eqref{eq:Shah}, the transformation \eqref{eq:EC-kernel} gives $\phi''(x) = \sqrt{\theta''(x)} = 1/\sqrt{x}$, so the Euclidean coordinates of the Shahshahani metric are $\xi_{\alpha} = 2\sqrt{x_{\alpha}}$ and $\intstrat$ is isometric to the hypersurface:
\begin{equation}
\label{eq:confold-S}
\txs
S
	= \big\{ \xi\in\R^{n+1} : \xi_{\alpha}>0,\:\insum_{\alpha=0}^{n} \xi_{\alpha}^{2} = 4 \big\},
\end{equation}
which is simply the (open) positive orthant of an $n$-dimensional sphere of radius $2$.%
\footnote{This change of variables was first considered by Akin \cite{Aki79} and is sometimes referred to as \emph{Akin's transformation} \cite{San10}.}
Hence, substituting in \eqref{eq:ID-E}, the Euclidean equivalent of the dynamics \eqref{eq:IRD} will be given by:
\begin{equation}
\label{eq:IRD-E}
\ddot \xi_{\alpha}
	= \frac{1}{2} \xi_{\alpha} \left[\payv_{\alpha} - \frac{1}{4}\insum_{\beta} \xi_{\beta}^{2}\payv_{\beta} \right]
	- \frac{1}{2} \xi_{\alpha} \kin
	- \friction \dot\xi_{\alpha},
\end{equation}
where $K = \tfrac{1}{2}\insum_{\beta} \dot\xi_{\beta}^{2}$ represents the system's kinetic energy.
\end{example}

\begin{example}
\label{ex:EC-L}
In the case of the log-barrier metric \eqref{eq:log}, we have $\phi''(x) = 1/x$, so the metric's Euclidean coordinates are given by the transformation $\xi_{\alpha} = \phi'(x_{\alpha}) = \log x_{\alpha}$.
Under this transformation, $\intstrat$ is mapped isometrically to the hypersurface
\begin{equation}
\label{eq:confold-L}
\txs
S
	= \big\{ \xi\in\R^{n+1}: \xi_{\alpha}<0,\: \insum_{\beta}e^{\xi_{\beta}}=1 \big\},
\end{equation}
which is a closed (non-compact) hypersurface of $\R^{n+1}$.
In these transformed variables, the log-barrier dynamics \eqref{eq:ILD} then become:
\begin{equation}
\label{eq:ILD-E}
\ddot \xi_{\alpha}
	= e^{\xi_{\alpha}} \left[ \payv_{\alpha} - r^{-2}\insum_{\beta} e^{2\xi_{\beta}}\payv_{\beta} \right]
	- r^{-2} e^{\xi_{\alpha}} \insum_{\beta} e^{\xi_{\beta}} \dot \xi_{\beta}^{2}
	- \friction \dot\xi_{\alpha},
\end{equation}
where $r^{2} = \insum_{\beta} x_{\beta}^{2} = \insum_{\beta} e^{2\xi_{\beta}}$.
\end{example}

The above examples highlight an important geometric difference between the dynamics \eqref{eq:IRD} and \eqref{eq:ILD}:
\eqref{eq:IRD} corresponds to a classical particle moving under the influence of a finite force on an open portion of a sphere while \eqref{eq:ILD} corresponds to a classical particle moving under the influence of a finite force on the unbounded hypersurface \eqref{eq:confold-L}.
As a result, physical intuition suggests that trajectories of \eqref{eq:IRD} escape in finite time while trajectories of \eqref{eq:ILD} exist for all time (cf. Fig.~\ref{fig:wp}).

The following theorem (proved in App.~\ref{app:calculations}) shows that this is indeed the case:

\begin{theorem}
\label{thm:wp}
Let $g$ be a \acl{HR} metric on the open orthant $\orthalt = \orthant^{n+1}$ and let $S$ be the image of $\intstrat$ under the Euclidean transformation \eqref{eq:EC}.
Then, the inertial dynamics \eqref{eq:ID} are well-posed on $\intstrat$ if and only if $S$ is a closed hypersurface of $\R^{n+1}$.
\end{theorem}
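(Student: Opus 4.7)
My plan is to recast the dynamics \eqref{eq:ID} as the classical Newtonian equation \eqref{eq:ID-E} on the hypersurface $S\subset\R^{n+1}$ and then read off well-posedness from the geometry of $S$. Since the derivative map $G = \nabla\psi$ of Proposition \ref{prop:isometry} is an isometric embedding of $(\intstrat,g)$ onto $S$, carrying \eqref{eq:ID} to \eqref{eq:ID-E}, well-posedness of \eqref{eq:ID} on $\intstrat$ is equivalent to all maximal solutions of \eqref{eq:ID-E} on $S$ being defined for $t\in[0,\infty)$.

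For the ``if'' direction, I would invoke energy dissipation. Assume $S$ is closed in $\R^{n+1}$. Proposition \ref{prop:dissipation} gives $\dot E \le 0$, so, since $\pot$ is continuous on the compact simplex $\strat$, I obtain the uniform Euclidean speed bound $\|\dot\xi\|^{2} = \|\dot x\|_{g}^{2} \le 2E(0) + 2\max_{\strat}\pot$. On any finite interval $[0,T]$ of existence the trajectory therefore remains in the closed Euclidean ball of radius $T\sqrt{2E(0)+2\max_{\strat}\pot}$ around $\xi(0)$; its intersection with $S$ is compact because $S$ is closed in $\R^{n+1}$, so the standard escape-from-compact-sets principle for smooth ODEs forces the maximal solution to exist on all of $[0,\infty)$.

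For the ``only if'' direction, I first observe that since $G$ acts coordinatewise via $\xi_\alpha = \phi'(x_\alpha)$, the image $S$ is closed in $\R^{n+1}$ if and only if $\phi'$ is unbounded below on $(0,\infty)$, equivalently $\int_0^1\sqrt{\theta''(s)}\,ds = +\infty$ (since $\phi''=\sqrt{\theta''}$). When this integral is finite, the straight segment $\gamma(t) = (1-t)x^{(0)} + tx^{\ast}$ from an interior point $x^{(0)}\in\intstrat$ to a boundary point $x^{\ast}\in\bd(\strat)$ has finite $g$-length: a componentwise estimate gives $L_{g}(\gamma) \le \sum_\alpha |x^{\ast}_\alpha - x^{(0)}_\alpha|\int_0^1 \sqrt{\theta''(x_\alpha(t))}\,dt$, and each summand with $x^{\ast}_\alpha = 0$ reduces after a change of variables to a multiple of $\int_0^{x^{(0)}_\alpha}\sqrt{\theta''(u)}\,du$, finite by hypothesis. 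Hence $(\intstrat, g)$ is metrically incomplete, and by the Hopf\textendash Rinow theorem some maximal geodesic on $\intstrat$ has bounded domain $[0, T^{\ast})$ with $T^{\ast} < \infty$. Taking $\payv \equiv 0$ and $\friction = 0$ in \eqref{eq:ID} reduces the system to the geodesic equation on $(\intstrat,g)$, so this incomplete geodesic exhibits a trajectory of \eqref{eq:ID} that exits $\intstrat$ in finite time.

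The principal technical hurdle is the equivalence ``$S$ closed in $\R^{n+1}$ iff $(\intstrat, g)$ is geodesically complete'': the product form of $G$ is what reduces the question to the scalar integrability criterion $\int_0^1\sqrt{\theta''} < \infty$, and this criterion in turn allows me to build an explicit finite-length path to the boundary of $\strat$, the rest being a routine application of Hopf\textendash Rinow on the connected manifold $\intstrat$. The one-dimensional calculations in \eqref{eq:IRD-simple}\textendash\eqref{eq:oscillation} already illustrate that trajectories really do reach the boundary in finite time for the Shahshahani kernel (where $\sqrt{\theta''}$ is integrable near zero), in contrast to the log-barrier dynamics \eqref{eq:ILD-simple}\textendash\eqref{eq:xi-ILD}.
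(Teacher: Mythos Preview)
Your argument is correct and close in spirit to the paper's, but the ``only if'' direction follows a genuinely different route.  For the ``if'' direction you invoke Proposition~\ref{prop:dissipation} to get the uniform speed bound and then appeal to the escape-from-compact-sets principle; the paper instead rederives the speed bound by a bare-hands work--energy estimate (bounding the tangential force $F$ and integrating $\dot\ell \le \sqrt{\vel_0^2 + 2\Fsup\,\ell}$) and then closes by a Cauchy-sequence argument showing that $\xi(t)$ and $\dot\xi(t)$ have limits in $S$ at the putative blow-up time.  These are the same idea packaged differently; your version is cleaner but relies on $\payv=\nabla\pot$ through Proposition~\ref{prop:dissipation}, whereas the paper's estimate works for any bounded $\payv$.

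For the ``only if'' direction the approaches diverge.  The paper stays in the Euclidean picture: it picks $q\in\overline S\setminus S$, runs a geodesic of $\overline S$ from $q$ into the interior for a short time, and time-reverses to obtain a solution of \eqref{eq:ID-E} that hits the boundary in finite time.  You instead translate ``$S$ not closed'' into the scalar criterion $\int_0^1\sqrt{\theta''}<\infty$ (thereby proving Corollary~\ref{cor:wp} \emph{before} the theorem rather than after), build an explicit finite-$g$-length segment to $\bd(\strat)$, and invoke Hopf--Rinow to produce an incomplete geodesic.  Both are valid; the paper's construction is direct and self-contained, while yours makes explicit that well-posedness is exactly metric completeness of $(\intstrat,g)$.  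One point to flag: your escaping trajectory is produced only for $\payv\equiv0$, $\friction=0$, so if the theorem is read as a statement about a \emph{fixed} payoff and friction coefficient you still owe the extension to general $\payv$ and $\friction>0$.  The paper handles this in a footnote by running the time-reversal trick on the anti-friction system $\ddot\xi = F + N + \friction\dot\xi$; the same device would close your argument as well.
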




From a more practical viewpoint, Theorem \ref{thm:wp} allows us to verify that \eqref{eq:ID} is well-posed simply by checking that the Euclidean image $S$ of $\intstrat$ is closed.
More precisely, we have:

\begin{corollary}
\label{cor:wp}
The inertial dynamics \eqref{eq:ID} are well-posed if and only if the kernel $\theta$ of the \ac{HR} structure of $\strat$ satisfies $\int_{0}^{1} \sqrt{\theta''(x)} \dd x = +\infty$.
\end{corollary}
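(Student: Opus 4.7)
The plan is to combine Theorem \ref{thm:wp} with a direct analysis of the Euclidean coordinate map \eqref{eq:EC}, reducing the topological question of whether $S = G(\intstrat)$ is a closed hypersurface of $\R^{n+1}$ to a one-dimensional integrability condition on $\theta''$. Since $\phi''(x) = \sqrt{\theta''(x)} > 0$, the kernel derivative $\phi'$ is strictly increasing on $(0,+\infty)$, so the limit $\ell \defeq \lim_{x\to 0^{+}} \phi'(x)$ exists in $[-\infty,\phi'(1))$, and by the fundamental theorem of calculus
\begin{equation*}
\phi'(1) - \ell
	= \int_{0}^{1} \phi''(x) \dd x
	= \int_{0}^{1} \sqrt{\theta''(x)} \dd x.
\end{equation*}
Thus the integral condition in the statement of the corollary is precisely $\ell = -\infty$, and it suffices to show that $S$ is closed in $\R^{n+1}$ if and only if $\ell = -\infty$.

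For the forward direction, assume $\ell = -\infty$ and let $\xi_{n} = G(x_{n}) \in S$ converge to some $\xi^{\ast}\in\R^{n+1}$. By compactness of $\strat$, a subsequence of $x_{n}$ converges to some $x^{\ast}\in\strat$; if any coordinate $x^{\ast}_{\alpha}$ vanished, the corresponding Euclidean coordinate $\xi_{n,\alpha} = \phi'(x_{n,\alpha})$ would tend to $\ell = -\infty$, contradicting convergence of $\xi_{n}$ in $\R^{n+1}$. Hence $x^{\ast}\in\intstrat$ and $\xi^{\ast} = G(x^{\ast})\in S$ by continuity. Conversely, if $\ell\in\R$ is finite, I would pick any sequence $x_{n}\in\intstrat$ with $x_{n,0}\to 0$ and $x_{n,\alpha}\to a_{\alpha}>0$ for $\alpha\geq 1$ (normalized so that $\sum_{\alpha\geq 1} a_{\alpha} = 1$); then $\xi_{n}$ converges coordinate-wise to a finite limit $\xi^{\ast}\in\R^{n+1}$ which, by the strict monotonicity of $\phi'$ extended to $[0,+\infty)$ via $\phi'(0) = \ell$, cannot coincide with $G(y)$ for any $y\in\intstrat$. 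Hence $\xi^{\ast}\notin S$ and $S$ is not closed.

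The argument is largely elementary once Theorem \ref{thm:wp} is available; the only subtle point is to verify that the separable form of $h$ in \eqref{eq:penalty} ensures that every boundary stratum of $\strat$ (i.e., every face of any codimension) is detected by the same one-dimensional integrability criterion, which is automatic because $\theta$ acts independently on each coordinate. The main technical obstacle sits upstream, inside the proof of Theorem \ref{thm:wp} itself; relative to it, the corollary is essentially a translation lemma for the one-dimensional potential determined by $\theta$.
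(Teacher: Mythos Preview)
Your argument is correct and follows essentially the same route as the paper: invoke Theorem~\ref{thm:wp} and reduce the closedness of $S=G(\intstrat)$ to the one-dimensional behavior of $\phi'$ near $0$. The paper's proof is terser and argues via boundedness (the integral is finite iff $S$ is bounded, and bounded implies not closed since $S$ is homeomorphic to the non-compact set $\intstrat$), whereas you establish both directions of the closedness equivalence directly; your version is in fact more complete, since the paper's one-line proof only makes the ``integral finite $\Rightarrow$ not closed'' direction explicit and contains a typo ($\phi'$ where $\phi''$ is meant).
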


\begin{proof}
Simply note that the image $S=G(\intstrat)$ of $\intstrat$ under \eqref{eq:EC} is bounded (and hence, not closed) if and only if $\int_{0}^{1} \phi'(x) \dd x = \int_{0}^{1} \sqrt{\theta''(x)} \dd x < +\infty$.
\hfill
\end{proof}

In light of the above, we finally obtain:

\begin{corollary}
\label{cor:wp-log}
The inertial log-barrier dynamics \eqref{eq:ILD} are well-posed.
\end{corollary}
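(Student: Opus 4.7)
The plan is to invoke Corollary \ref{cor:wp} directly: well-posedness of \eqref{eq:ID} with a kernel $\theta$ is equivalent to the divergence of the integral $\int_{0}^{1}\sqrt{\theta''(x)}\,dx$. Since the log-barrier is a specific instance of \eqref{eq:ID}, it suffices to verify this integral condition for the log-barrier kernel $\theta^{L}$.

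First I would recall that the log-barrier kernel is $\theta^{L}(x) = -\log x$, so $(\theta^{L})''(x) = 1/x^{2}$ and hence $\sqrt{(\theta^{L})''(x)} = 1/x$ for $x\in(0,1)$. Then I would compute
\begin{equation}
\int_{0}^{1} \sqrt{(\theta^{L})''(x)}\,dx
    = \int_{0}^{1} \frac{dx}{x}
    = +\infty,
\end{equation}
so the divergence criterion of Corollary \ref{cor:wp} is satisfied. Applying that corollary yields the claim.

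As a sanity check, this matches the geometric picture developed in Example \ref{ex:EC-L}: the Euclidean-coordinate map $\xi_{\alpha} = \log x_{\alpha}$ sends $\intsimplex$ isometrically onto the hypersurface $S = \{\xi\in\R^{n+1}: \xi_{\alpha}<0, \; \sum_{\beta} e^{\xi_{\beta}} = 1\}$, which is a closed (though non-compact) hypersurface of $\R^{n+1}$. Theorem \ref{thm:wp} then asserts well-posedness independently, confirming the computation above.

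There is no real obstacle here: the corollary is a one-line verification once the two derivatives of $\theta^{L}$ are computed, and the only thing to be careful about is the behavior of the integrand at $x=0$ (which indeed diverges, as required).
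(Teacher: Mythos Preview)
Your proposal is correct and matches the paper's approach exactly: the paper states the corollary immediately after Corollary~\ref{cor:wp} with the phrase ``In light of the above, we finally obtain,'' leaving the verification of $\int_{0}^{1}\sqrt{(\theta^{L})''(x)}\,dx = \int_{0}^{1} x^{-1}\,dx = +\infty$ implicit. Your additional geometric sanity check via Example~\ref{ex:EC-L} and Theorem~\ref{thm:wp} is a nice touch but not required.
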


\section{Long-term optimization and rationality properties}
\label{sec:results}

In this section, we investigate the long-term optimization and rationality properties of the inertial dynamics \eqref{eq:ID}.
Specifically, Section \ref{sec:optimization} focuses on the single-agent framework of \eqref{eq:ID} with $\payv = \nabla\pot$ for some smooth (but not necessarily concave) objective function $\pot\from\strat\to\R$;
Section \ref{sec:rationality} then examines the convergence properties of \eqref{eq:ID} in the context of games in normal form (both symmetric and asymmetric).

Since we are interested in the long-term convergence properties of \eqref{eq:ID}, we will assume throughout this section that:
\begin{equation}
\label{eq:WP}
\tag{WP}
\text{The solution orbits $x(t)$ of the inertial dynamics \eqref{eq:ID} exist for all time.}
\end{equation}
Thus, in what follows (and unless explicitly stated otherwise), we will be implicitly assuming that the conditions of Theorem \ref{thm:wp} and Corollary \ref{cor:wp} hold;
as such, the analysis of this section applies to the inertial log-barrier dynamics \eqref{eq:ILD} but not to the inertial replicator system \eqref{eq:IRD} which fails to be well-posed.

\subsection{Convergence and stability properties in constrained optimization}
\label{sec:optimization}

As before, let $\strat \equiv \simplex(n+1)$ be the $n$-dimensional simplex of $V\equiv\R^{n+1}$ and let $\pot\from\strat\to\R$ be a smooth objective function on $\strat$.
Proposition \ref{prop:dissipation} shows that the system's energy is dissipated along \eqref{eq:ID}, so physical intuition suggests that interior trajectories of \eqref{eq:ID} are attracted to (local) maximizers of $\pot$.
We begin by showing that if an orbit spends an arbitrarily long amount of time in the vicinity of some point $\eq\in\strat$, then $\eq$ must be a critical point of $\pot$ restricted to the subface $\eqset$ of $\strat$ that is spanned by $\supp(\eq)$:

\begin{proposition}
\label{prop:stationarity}
Let $x(t)$ be an interior solution of \eqref{eq:ID} that is defined for all $t\geq0$.
Assume further that, for every $\delta>0$ and for every $T>0$, there exists an interval $J$ of length at least $T$ such that $\max_{\alpha}\{|x_{\alpha}(t) - \eq_{\alpha}|\} < \delta$ for all $t\in J$.
Then:
\begin{equation}
\label{eq:stationarity}
\pd_{\alpha} \pot(\eq)
	= \pd_{\beta} \pot(\eq)
\quad
\text{for all $\alpha,\beta\in\supp(\eq)$.}
\end{equation}
\end{proposition}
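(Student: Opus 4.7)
The plan is to argue by contradiction: suppose there exist $\alpha^{\ast}, \beta^{\ast}\in\supp(\eq)$ with $\partial_{\alpha^{\ast}}\pot(\eq)\neq\partial_{\beta^{\ast}}\pot(\eq)$, and show that this forces $\dot x(t)$ to drift by a definite amount on the intervals furnished by the hypothesis, in contradiction with the velocity decay $\dot x(t)\to 0$ supplied by Proposition \ref{prop:stop}.

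The first step is to place the recurrent intervals in the tail of time. By Proposition \ref{prop:stop}, for each $n$ there is $T_n$ with $\|\dot x(t)\|\le 1/n$ for all $t\ge T_n$. Applying the hypothesis with $\delta=1/n$ and $T=T_n+n$ yields some interval $[a,b]$ with $b-a\ge T_n+n$ and $\|x(t)-\eq\|_\infty<1/n$ on $[a,b]$; the truncation $J_n \defeq [a,b]\cap[T_n,\infty)$ then has length at least $n$, lies inside $[T_n,\infty)$, and satisfies both $\|x(t)-\eq\|_\infty\le 1/n$ and $\|\dot x(t)\|\le 1/n$ on $J_n$. Moreover, the summability $\int_0^\infty \kin\,dt<\infty$ established in the proof of Proposition \ref{prop:stop}, together with the uniform lower bound $\theta''(x_\alpha(t))\ge c_\alpha>0$ valid on $J_n$ for each $\alpha\in\supp(\eq)$ (because $x_\alpha$ stays in a compact subset of $(0,1)$), gives $\int_{J_n}\dot x_\alpha^2\,dt\to 0$ for every $\alpha\in\supp(\eq)$.

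Next, rearranging \eqref{eq:ID} and multiplying through by $\theta''_\alpha$ puts the dynamics in the form
\begin{equation*}
\theta_\alpha''\ddot x_\alpha + \tfrac12\theta_\alpha'''\dot x_\alpha^2 + \friction\,\theta_\alpha''\dot x_\alpha \;=\; \payv_\alpha \;-\; \sum_\gamma \frac{\Theta''}{\theta_\gamma''}\bigl[\payv_\gamma - \tfrac12\theta_\gamma'''\dot x_\gamma^2\bigr],
\end{equation*}
and the $\alpha$-independent bracket on the right cancels upon subtracting the equations for $\alpha=\alpha^{\ast}$ and $\beta=\beta^{\ast}$. Integrating the resulting difference over $J_n=[a_n,b_n]$ and using integration by parts, $\int_{J_n}\theta_\alpha''\ddot x_\alpha\,dt=[\theta_\alpha''\dot x_\alpha]_{a_n}^{b_n}-\int_{J_n}\theta_\alpha'''\dot x_\alpha^2\,dt$, together with $\int_{J_n}\friction\,\theta_\alpha''\dot x_\alpha\,dt=\friction[\theta'(x_\alpha(b_n))-\theta'(x_\alpha(a_n))]$, I find that every term on the LHS is a sum of boundary terms of order $O(1/n)$ (since $\theta''_\alpha,\theta'_\alpha$ stay bounded near $\eq_\alpha>0$ and $\dot x$ is $O(1/n)$) plus a velocity-squared integral of size $O(\int_{J_n}\dot x_\alpha^2\,dt)=o(1)$. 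Hence the LHS integral is $o(1)$, and a fortiori $o(|J_n|)$. On the other hand, continuity of $\payv$ and uniform proximity of $x(t)$ to $\eq$ on $J_n$ yield
\begin{equation*}
\frac{1}{|J_n|}\int_{J_n}\bigl[\payv_{\alpha^{\ast}}(x(t))-\payv_{\beta^{\ast}}(x(t))\bigr]\,dt \;\longrightarrow\; \partial_{\alpha^{\ast}}\pot(\eq)-\partial_{\beta^{\ast}}\pot(\eq),
\end{equation*}
and dividing the master identity by $|J_n|\ge n$ forces this limit to vanish, contradicting the assumption.

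The main obstacle is the bookkeeping on the left-hand side of the master identity: a priori the Hessian weights $\theta''_\gamma$ and $\theta'''_\gamma$ blow up at the boundary of $\strat$, so controlling every term of \eqref{eq:ID} along a trajectory that may approach $\bd(\strat)$ is delicate. The key simplifying feature is that all boundary-blowing contributions are packaged into the $\alpha$-independent bracket $\sum_\gamma(\Theta''/\theta_\gamma'')[\payv_\gamma-\tfrac12\theta_\gamma'''\dot x_\gamma^2]$ on the right, which cancels out identically upon specializing to the pair $\alpha^{\ast},\beta^{\ast}\in\supp(\eq)$; this leaves on the LHS only Hessian weights at indices in $\supp(\eq)$, which are uniformly bounded in a neighborhood of $\eq$, so the velocity-squared estimate from the energy bound is enough to close the argument.
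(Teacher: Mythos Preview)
Your argument is correct and takes a genuinely different route from the paper's. The paper passes to the Euclidean coordinates $\xi_{\alpha}=\phi'(x_{\alpha})$, singles out an index $\alpha\in\supp(\eq)$ with minimal payoff, and uses the sign hypothesis $\theta'''<0$ to drop the contact-force term in \eqref{eq:ID-E}, obtaining the clean differential inequality $\ddot\xi_{\alpha}+\friction\dot\xi_{\alpha}<-m$; Lemma~\ref{lem:ineq-diff} then forces $\xi_{\alpha}$ below any prescribed level on a long enough interval, contradicting $x_{\alpha}\approx\eq_{\alpha}>0$. You instead stay in the original coordinates, subtract the equations for the pair $\alpha^{\ast},\beta^{\ast}\in\supp(\eq)$ so that the $\alpha$-independent bracket (which carries all the potentially singular weights $\theta''_{\gamma},\theta'''_{\gamma}$ for $\gamma\notin\supp(\eq)$) cancels identically, integrate by parts, and exploit the global bound $\int_{0}^{\infty}\kin\,dt<\infty$ together with the interior boundedness of $\theta''_{\alpha^{\ast}},\theta'''_{\alpha^{\ast}},\theta''_{\beta^{\ast}},\theta'''_{\beta^{\ast}}$ to show the left side is $O(1)$, hence $o(|J_{n}|)$; averaging then yields $\payv_{\alpha^{\ast}}(\eq)=\payv_{\beta^{\ast}}(\eq)$.

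What each approach buys: your subtraction-and-averaging argument never invokes the Euclidean embedding or the regularity condition $\theta'''<0$ (only smoothness of $\theta$ on compact subsets of $(0,1)$ is used), so it is somewhat more robust in the choice of kernel. Conversely, the paper's comparison argument can be made to run even when $\friction=0$: energy conservation gives a uniform bound on $|\dot\xi_{\alpha}(a)|$, and the frictionless case of Lemma~\ref{lem:ineq-diff} still forces escape for $T$ large. Your argument, by contrast, leans essentially on the integrability $\int_{0}^{\infty}\kin\,dt<\infty$ furnished by Proposition~\ref{prop:stop}, so it requires $\friction>0$; this is harmless here since the downstream applications (Theorem~\ref{thm:opt}, Proposition~\ref{prop:potential}) all assume positive friction anyway.
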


For the proof of this proposition, we will need the following preparatory lemma:

\begin{lemma}
\label{lem:ineq-diff}
Let $\xi\from [a,b] \to\R$ be a smooth curve in $\R$ such that
\begin{equation}
\label{eq:ineq-diff}
\ddot\xi + \friction \xi \leq -m,
\end{equation}
for some $\friction\geq0,$ $m>0$ and for all $t\in [a,b]$.
Then, for all $t\in [a,b]$, we have:
\begin{equation}
\label{eq:ineq-sol}
\xi(t) \leq \xi(a) +
\begin{cases}
	\friction^{-1}\left(\dot\xi(a) + m\friction^{-1}\right) \left(1 - e^{-\friction (t-a)}\right) - m\friction^{-1} (t-a)
	&\quad \text{if $\friction >0$},\\[5pt]
	\dot\xi(a) (t-a) - \frac{1}{2} m (t-a)^{2}
	&\quad \text{if $\friction=0$}.
\end{cases}
\end{equation}
\end{lemma}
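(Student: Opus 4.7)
The plan is to reduce the second-order differential inequality to a first-order one by setting $v \defeq \dot\xi$, so that \eqref{eq:ineq-diff} (which, to match the solution formula \eqref{eq:ineq-sol}, I read as $\ddot\xi + \friction\dot\xi \leq -m$) becomes the scalar linear inequality
\[
\dot v(t) + \friction v(t) \leq -m, \qquad t\in[a,b].
\]
This is a standard Gr\"onwall-type inequality that can be solved explicitly by multiplying through by the integrating factor $e^{\friction(t-a)}$ (which is nonnegative, so the inequality is preserved), giving
\[
\frac{d}{dt}\bigl(e^{\friction(t-a)} v(t)\bigr) \leq -m\, e^{\friction(t-a)}.
\]

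Integrating this from $a$ to $t$ and dividing by $e^{\friction(t-a)}$ yields, for $\friction > 0$,
\[
v(t) \leq e^{-\friction(t-a)} \dot\xi(a) - m\friction^{-1}\bigl(1 - e^{-\friction(t-a)}\bigr),
\]
while for $\friction = 0$ one obtains directly $v(t) \leq \dot\xi(a) - m(t-a)$. A second integration of these bounds over $[a,t]$, using $\xi(t) = \xi(a) + \int_{a}^{t} v(s)\dd s$, produces exactly the two expressions on the right-hand side of \eqref{eq:ineq-sol}; indeed, the quadratic formula for the $\friction=0$ case is also recovered from the $\friction > 0$ formula by letting $\friction \to 0^{+}$, which is a useful internal consistency check.

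The main (and only real) subtlety lies in the first step: rigorously passing from the differential inequality $\dot v + \friction v \leq -m$ to its integrated form. This is standard once one notes that the quantity $w(t) \defeq e^{\friction(t-a)} v(t) + m\friction^{-1}\bigl(e^{\friction(t-a)} - 1\bigr)$ satisfies $\dot w \leq 0$, so that $w(t) \leq w(a) = v(a)$ for all $t \geq a$; rearranging this inequality recovers the bound on $v(t)$ used above, without any appeal to a general comparison principle. No further difficulty arises, and the proof is essentially a two-line computation followed by integration.
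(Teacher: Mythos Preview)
Your proof is correct and follows essentially the same route as the paper's: multiply the inequality $\ddot\xi + \friction\dot\xi \leq -m$ by the integrating factor $e^{\friction t}$, integrate once to bound $\dot\xi$, and integrate a second time to bound $\xi$ (treating $\friction=0$ separately by two direct integrations). You also correctly noted that the hypothesis as printed, $\ddot\xi + \friction\xi \leq -m$, must be read as $\ddot\xi + \friction\dot\xi \leq -m$ to match the conclusion \eqref{eq:ineq-sol}; the paper's own proof confirms this reading.
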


\begin{proof}
The case $\friction=0$ is trivial to dispatch simply by integrating \eqref{eq:ineq-diff} twice.
On the other hand, for $\friction>0$, if we multiply both sides of \eqref{eq:ineq-diff} with $\exp(\friction t)$ and integrate, we get:
\begin{equation}
\label{eq:ineq-vel}
\dot \xi(t)
	\leq \dot\xi(a) e^{-\friction (t-a)} - m\friction^{-1} \left(1- e^{-\friction (t-a)}\right),
\end{equation}
and our assertion follows by integrating a second time.
\hfill
\end{proof}

\begin{proofof}{Proof of Proposition \ref{prop:stationarity}}
Set $\payv_{\beta} = \pd_{\beta}\pot$, $\beta=0,\dotsc,n$, and let $\alpha$ be such that $\payv_{\alpha}(\eq) \leq \payv_{\beta}(\eq)$ for all $\beta\in\supp(\eq)$;
assume further that $\payv_{\alpha}(\eq) \neq \payv_{\gamma}(\eq)$ for some $\gamma\in\supp(\eq)$.
We then have $\payv_{\alpha}(\eq) - \insum_{\beta\in\supp(\eq)} \Theta''(\eq)/\theta_{\beta}''(\eq) \cdot \payv_{\beta}(\eq) < -m' < 0$ for some $m'>0$, and hence, by continuity, there exists some $m>0$ such that
\begin{equation}
\label{eq:payest}
\theta_{\alpha}''(x)^{-1/2}
	\left(
	\payv_{\alpha}(x) - \insum_{\beta} \left(\Theta''(x)\big/\theta_{\beta}''(x)\right) \payv_{\beta}(x)
	\right)
	< m < 0
\end{equation}
for all $x\in \nhd_{\delta} \equiv \{x: \max_{\beta} |x_{\beta} - \eq_{\beta}|<\delta\}$ and for all sufficiently small $\delta>0$ (simply recall that $\lim_{x\to0^{+}}\theta''(x) = +\infty$ and that $\theta_{\alpha}''(\eq) > 0$).

That being so, fix $\delta>0$ as above, and let $M>0$ be such that $x_{\alpha} - \eq_{\alpha} <-\delta$ whenever the Euclidean coordinates $\xi_{\alpha}$ of \eqref{eq:EC} satisfy $\xi_{\alpha}<-M$.
Choose also some sufficiently large $T>0$;
then, by assumption, there exists an interval $J=[a,b]$ with length $b-a\geq T$ and such that $x(t) \in \nhd_{\delta}$ for all $t\in J$.
Since $\lim_{t\to\infty} \dot x_{\alpha}(t) = 0$ by Proposition \ref{prop:stop}, we may also assume that the interval $J=[a,b]$ is such that $\dot \xi_{\alpha}(a)$ is itself sufficiently small (simply note that if $x_{\alpha}$ is bounded away from $0$, $\dot \xi_{\alpha} = \phi''(x_{\alpha})\dot x_{\alpha}$ cannot become arbitrarily large).

In this manner,
the Euclidean presentation \eqref{eq:ID-E} of \eqref{eq:ID} yields
\begin{equation}
\ddot \xi_{\alpha}
	\leq -m
	+\frac{1}{2} \frac{1}{\sqrt{\theta_{\alpha}''}} \insum_{\beta} \Theta''\theta_{\beta}'''\big/(\theta_{\beta}'')^{2} \dot\xi_{\beta}^{2}
	- \friction \dot\xi_{\alpha}
	< -m - \friction \dot\xi_{\alpha}
\quad
\text{for all $t\in J$},
\end{equation}
where the second inequality follows from the regularity assumption $\theta'''(x) < 0$.
However, with $T$ large enough and $\dot \xi_{\alpha}(a)$ small enough, Lemma \ref{lem:ineq-diff} shows that $\xi_{\alpha}(t) < -M$ for large enough $t\in J$, implying that $x(t) \neq \nhd_{\delta}$, a contradiction.
We thus conclude that $\payv_{\alpha}(\eq) = \payv_{\gamma}(\eq)$ for all $\alpha,\gamma\in\supp(\eq)$, as claimed.
\end{proofof}

Proposition \ref{prop:stationarity} shows that if $x(t)$ converges to $\eq\in\strat$, then $\eq$ must be a restricted critical point of $\pot$ in the sense of \eqref{eq:stationarity}.
More generally, the following lemma establishes that any $\omega$-limit of \eqref{eq:ID} has this property:

\begin{lemma}
\label{lem:omega}
Let $\omlim$ be an $\omega$-limit of \eqref{eq:ID} for $\friction>0$, and let $\nhd$ be a neighborhood of $\omlim$ in $\strat$.
Then, for every $T>0$, there exists an interval $J$ of length at least $T$ such that $x(t) \in \nhd$ for all $t\in J$.
\end{lemma}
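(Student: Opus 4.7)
The idea is entirely kinematic: since friction forces the velocity to die out (Proposition \ref{prop:stop}) and $\omlim$ is approached by the orbit infinitely often, the orbit must be moving arbitrarily slowly when it is near $\omlim$, and so it is trapped in any prescribed neighborhood of $\omlim$ for arbitrarily long stretches of time. I will organize the argument as follows.

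\emph{Step 1 (set up the scales).} Given the neighborhood $\nhd$ of $\omlim$ in $\strat$, I would choose a smaller neighborhood $\nhd' \subseteq \nhd$ of $\omlim$ whose Euclidean closure lies in the interior of $\nhd$, so that there exists $r>0$ with
\begin{equation*}
\{x\in\strat : \dist(x,\nhd')<r\} \subseteq \nhd,
\end{equation*}
where $\dist$ denotes the ordinary Euclidean distance on $V=\R^{\act}$. This is just a standard separation of a compact set from the complement of an open set.

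\emph{Step 2 (exploit vanishing velocity).} By Proposition \ref{prop:stop}, $\dot x(t)\to 0$ as $t\to\infty$. To be precise, the proof of Proposition \ref{prop:stop} gives $\kin(t)\to 0$ for the Riemannian kinetic energy, and then the uniform bound $\theta''(x)\geq B>0$ for $x\in(0,1)$ (which was used in Lemma \ref{lem:kin-bound} and follows from $\theta''>0$, $\theta'''<0$) implies that each Euclidean component $\dot x_\alpha(t)$ tends to $0$. Hence, for any prescribed $\eps>0$, there exists $T_0$ such that $\insum_{\alpha} |\dot x_\alpha(t)| < \eps$ for all $t\geq T_0$. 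Apply this with $\eps = r/T$ to obtain a time $T_0$ beyond which the Euclidean speed of the orbit is smaller than $r/T$.

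\emph{Step 3 (use the $\omega$-limit property).} By definition of an $\omega$-limit, there is a sequence $t_n\to\infty$ with $x(t_n)\to\omlim$. Choose $n$ large enough that $t_n\geq T_0$ and $x(t_n)\in\nhd'$, and set $J = [t_n, t_n + T]$. For every $t\in J$ we have
\begin{equation*}
\dist(x(t),x(t_n))
    \leq \insum_{\alpha} \int_{t_n}^{t} |\dot x_\alpha(s)| \dd s
    \leq (r/T)\cdot (t-t_n)
    \leq r,
\end{equation*}
so $x(t)\in\nhd$ by the choice of $r$ in Step 1. Thus $J$ is an interval of length $T$ with $x(J)\subseteq \nhd$, as required.

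\emph{Expected obstacle.} There is no genuinely hard step: the only subtlety is justifying that the convergence $\dot x(t)\to 0$ produced by Proposition \ref{prop:stop} is strong enough to control Euclidean displacements, which is also delicate if $\omlim$ lies on $\bd(\strat)$. This is handled by the uniform lower bound $\theta''\geq B>0$ on $(0,1)$ used in Lemma \ref{lem:kin-bound}, which converts the Riemannian convergence $\kin(t)\to 0$ into Euclidean convergence $\dot x_\alpha(t)\to 0$ uniformly in $\alpha$; everything else is just the triangle inequality.
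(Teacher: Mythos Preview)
Your proof is correct and follows essentially the same approach as the paper's: both arguments are purely kinematic, resting on Proposition~\ref{prop:stop} ($\dot x(t)\to 0$) together with the uniform lower bound $\theta''\geq B>0$ to control Euclidean displacements. The only difference is cosmetic: the paper argues by contradiction via the mean value theorem (if the exit times $t_n'-t_n$ were bounded, some $|\dot x_\alpha(\tau_n)|$ would stay bounded away from zero), whereas you bound the displacement directly by integrating the velocity; both routes are equally short and rely on the same ingredients.
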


\begin{proof}
Fix a neighborhood $\nhd$ of $\omlim$ in $\strat$, and let $\nhd_{\delta} = \{x: \max_{\beta} \smallabs{x_{\beta}-x_{\beta}^{\ast}}<\delta\}$ be a $\delta$-neighborhood of $\omlim$ such that $\nhd_{\delta} \cap \strat \subseteq \nhd$.
By assumption, there exists an increasing sequence of times $t_{n}\to\infty$ such that $x(t_{n}) \to \omlim$, so we can take $x(t_{n})\in \nhd_{\delta/2}$ for all $n$.
Moreover, let $t_{n}' = \inf\{t: \text{$t\geq t_{n}$ and $x(t) \notin \nhd_{\delta}$}\}$ be the first exit time of $x(t)$ from $\nhd_{\delta}$ after $t_{n}$, and assume ad absurdum that $t_{n}' - t_{n} < M$ for some $M>0$ and for all $n$.
Then, by descending to a subsequence of $t_{n}$ if necessary, we will have $\abs{x_{\alpha}(t_{n}') - x_{\alpha}(t_{n})} > \delta/2$ for some $\alpha$ and for all $n$.
Hence, by the mean value theorem, there exists $\tau_{n} \in (t_{n},t_{n}')$ such that
\begin{equation}
\abs{\dot x_{\alpha}(\tau_{n})}
	= \frac{\abs{x_{\alpha}(t_{n}') - x_{\alpha}(t_{n})}}{t_{n}' - t_{n}}
	> \frac{\delta}{2M}
	\quad
	\text{for all $n$},
\end{equation}
implying in particular that $\limsup \abs{\dot x_{\alpha}(t)} > \delta/(2M) > 0$ in contradiction to Proposition \ref{prop:stop}.
We thus conclude that the difference $t_{n}' - t_{n}$ is unbounded, i.e. for every $\delta>0$ and for every $T>0$, there exists an interval $J$ of length at least $T$ such that $x(t) \in \nhd_{\delta}$ for all $t\in J$.
\hfill
\end{proof}

Even though the above properties of \eqref{eq:ID} are interesting in themselves (cf. Theorem \ref{thm:folk} for a game-theoretic interpretation), for now they will mostly serve as stepping stones to the following asymptotic convergence result:

\begin{theorem}
\label{thm:opt}
With notation as above, let $\eq\in\strat$ be a local maximizer of $\pot$ such that $(x-\eq)^{\trans} \cdot \hess(\pot(\eq))\cdot (x-\eq) > 0$ for all $x\in\strat$ with $\supp(x)\subseteq \supp(\eq)$ \textendash\ i.e. $\hess(\pot(\eq))$ is positive-definite when restricted to the subface of $\strat$ that is spanned by $\eq$.
If $\friction>0$, then, for every interior solution $x(t)$ of \eqref{eq:ID} that starts close enough to $\eq$ and with sufficiently speed $\norm{\dot x(0)}$, we have $\lim_{t\to\infty} x(t) = \eq$.
\end{theorem}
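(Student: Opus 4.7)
The strategy is to use the total energy $E(x,\dot x)=\tfrac{1}{2}\|\dot x\|^{2}-\pot(x)$ from Proposition \ref{prop:dissipation} as a Lyapunov function that confines the orbit to a prescribed neighborhood of $\eq$, and then to identify $\eq$ as the unique admissible $\omega$-limit by combining Lemma \ref{lem:omega}, Proposition \ref{prop:stationarity}, and the non-degenerate Hessian hypothesis.

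For the trapping step, Proposition \ref{prop:dissipation} gives $E(t)\le E(0)$, so
\[
\pot(x(t))\;\ge\;\pot(x(0))-\tfrac{1}{2}\|\dot x(0)\|^{2}\qquad\text{for every }t\ge 0.
\]
Taking $x(0)$ close to $\eq$ and $\|\dot x(0)\|$ sufficiently small drives the right-hand side arbitrarily close to $\pot(\eq)$. Because $\eq$ is a strict local maximizer on $\strat$ with non-degenerate restricted Hessian, the super-level sets $\{x\in\strat:\pot(x)\ge\pot(\eq)-\varepsilon\}$ shrink to $\{\eq\}$ as $\varepsilon\to 0^{+}$, so one can guarantee that $x(t)$ remains in an arbitrarily prescribed neighborhood $\nhd$ of $\eq$ in $\strat$ for all $t\ge 0$.

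Once confinement is secured, the orbit lies in the compact set $\overline{\nhd}\cap\strat$, so its $\omega$-limit set $\Omega$ is non-empty and contained in $\overline{\nhd}$. Proposition \ref{prop:stop} forces $\dot x(t)\to 0$, and Lemma \ref{lem:omega} shows that every $\omlim\in\Omega$ fulfils the hypothesis of Proposition \ref{prop:stationarity}. Consequently each $\omega$-limit is a restricted critical point of $\pot$, in the sense that $\pd_{\alpha}\pot(\omlim)=\pd_{\beta}\pot(\omlim)$ for all $\alpha,\beta\in\supp(\omlim)$. By continuity, any such $\omlim$ close to $\eq$ satisfies $\supp(\omlim)\supseteq\supp(\eq)$; on the face $\eqset$, the non-degeneracy of $\hess\pot(\eq)$ makes $\eq$ an isolated critical point of $\pot\vert_{\eqset}$ through the implicit function theorem, and a KKT comparison of multipliers at $\eq$ versus at a hypothetical limit with strictly larger support rules out the remaining possibility. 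Hence $\Omega=\{\eq\}$ and $x(t)\to\eq$ as $t\to\infty$.

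The main obstacle is the confinement step. The kinetic norm $\|\dot x\|^{2}=\sum_{\alpha}\theta''(x_{\alpha})\dot x_{\alpha}^{2}$ involves the singular \ac{HR} weights, so requiring $\|\dot x(0)\|$ to be small severely constrains certain components of $\dot x(0)$ whenever $\eq$ lies on $\bd(\strat)$. One must therefore verify that the admissible initial data $(x(0),\dot x(0))$ still form a neighborhood of $(\eq,0)$ inside $T\intstrat$, and that the geometric barrier inherent to the \ac{HR} metric keeps the orbit away from those boundary strata of $\strat$ that are not incident to $\eq$ \textendash\ a feature already visible in the well-posedness analysis of Theorem \ref{thm:wp}.
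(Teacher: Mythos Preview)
Your proposal follows the same architecture as the paper's proof: energy dissipation (Proposition~\ref{prop:dissipation}) traps the orbit near $\eq$, Lemma~\ref{lem:omega} and Proposition~\ref{prop:stationarity} identify every $\omega$-limit $\omlim$ as a restricted critical point with $\supp(\omlim)\supseteq\supp(\eq)$, and then one argues that $\omlim=\eq$ is the only such point nearby.

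The one place where your write-up is thinner than the paper is the final identification step, particularly the case $\supp(\omlim)\supsetneq\supp(\eq)$. Your appeal to a ``KKT comparison of multipliers'' is not spelled out, and it is not obvious what inequality you would extract. The paper handles both cases (equal support and strictly larger support) at once by a direct algebraic computation rather than via the implicit function theorem or KKT: since $\pd_{\beta}\pot(\omlim)$ takes a common value $\payv^{\ast}$ for all $\beta\in\supp(\omlim)$, and since $\supp(\eq)\subseteq\supp(\omlim)$, one has
\[
\sum_{\beta}\pd_{\beta}\pot(\omlim)\,(\eq_{\beta}-\omlim_{\beta})
=\payv^{\ast}\sum_{\beta\in\supp(\omlim)}(\eq_{\beta}-\omlim_{\beta})=0,
\]
while the strong concavity of $\pot$ on the face spanned by $\eq$ forces the same quantity to be $\ge\pot(\eq)-\pot(\omlim)\ge 0$ with equality only at $\omlim=\eq$. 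This single pair of inequalities replaces both your implicit-function and your KKT step, and it is what you should supply in place of the hand-wave.

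Your closing paragraph about the confinement step raises a legitimate technical concern (the \ac{HR} kinetic norm degenerates toward $\bd(\strat)$), but the paper does not engage with it either; it simply posits ``sufficiently low energy'' as the smallness condition on the initial data, which is the natural phase-space formulation here.
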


\begin{proof}
Let $\omlim$ be an $\omega$-limit of $x(t)$.
By Lemma \ref{lem:omega}, $x(t)$ will be spending arbitrarily long time intervals near $\omlim$, so Proposition \ref{prop:stationarity} shows that $\omlim$ satisfies the stationarity condition \eqref{eq:stationarity}, viz. $\pd_{\alpha} \pot(\omlim) = \pd_{\beta} \pot(\omlim) = \payv^{\ast}$ for all $\alpha,\beta\in\supp(\omlim)$.

We will proceed to show that the theorem's assumptions imply that $\eq$ is the unique $\omega$-limit of $x(t)$, i.e. $\lim_{t\to\infty} x(t) = \eq$.
To that end, assume that $x(t)$ starts close enough to $\eq$ and with sufficiently low energy.
Then, Proposition \ref{prop:dissipation} shows that every $\omega$-limit of $x(t)$ must also lie close enough to $\eq$ (simply note that $\pot(x(t))$ can never exceed the initial energy $E(0)$ of $x(t)$);
as a result, the support of any $\omega$-limit of $x(t)$ will contain that of $\eq$.
However, by the theorem's assumptions, the restriction of $\pot$ to the face of $\strat$ spanned by $\eq$ is strongly concave near $\eq$, and since $\omlim$ itself lies close enough to $\eq$, we get:
\begin{equation}
\label{eq:ccv1}
\insum_{\beta=0}^{n}
	\pd_{\beta} \pot(\omlim)\cdot
	(\omlim_{\beta} - \eq_{\beta})
	\leq \pot(\omlim) - \pot(\eq)
	\leq 0,
\end{equation}
with equality if and only if $\omlim = \eq$.
On the other hand, with $\supp(\eq)\subseteq\supp(\omlim)$, we also get:
\begin{flalign}
\label{eq:ccv2}
\sum_{\beta=0}^{n}
	\pd_{\beta} \pot(\omlim)\cdot
	(\eq_{\beta} - \omlim_{\beta})
	&= \sum_{\beta\in\supp(\omlim)}
	\pd_{\beta} \pot(\omlim)\cdot
	(\eq_{\beta} - \omlim_{\beta})
	\notag\\
	&= \payv^{\ast} \sum_{\beta\in\supp(\omlim)} (\eq_{\beta} - \omlim_{\beta})
	= 0,
\end{flalign}
so $\omlim = \eq$, as claimed.
\hfill
\end{proof}

\begin{remark}
Since the total energy $E(t)$ of the system is decreasing, Theorem \ref{thm:opt} implies that $x(t)$ stays close and converges to $\eq$ whenever it starts close to $\eq$ with low energy.
This formulation is almost equivalent to $\eq$ being asymptotically stable in \eqref{eq:ID};
in fact, if $\eq$ is interior, the two statements are indeed equivalent.
For $\eq\in\bd(\strat)$, asymptotic stability is a rather cumbersome notion because the structure of the phase space of the dynamics \eqref{eq:ID} changes at every subface of $\strat$;
in view of this, we opted to stay with the simpler formulation of Theorem \ref{thm:opt} \textendash\ for a related discussion, see \cite[Sec.~5]{LM13}.
\end{remark}

\begin{remark}
\label{rem:isolated}
We should also note here that the non-degeneracy requirement of Theorem \ref{thm:opt} can be relaxed:
for instance, the same proof applies if there is no $x'$ near $\eq$ such that $\pd_{\alpha} \pot(x') = \pd_{\beta} \pot(x')$ for all $\alpha,\beta\in\supp(x')$.
More generally, if $\eqset$ is a convex set of local maximizers of $\pot$ and \eqref{eq:ccv1} holds in a neighborhood of $\eqset$ with equality if and only if $\omlim\in\eqset$, a similar (but more cumbersome) reasoning shows that $x(t)\to\eqset$, i.e. $\eqset$ is locally attracting.
\end{remark}

\begin{remark}
Theorem \ref{thm:opt} is a local convergence result and does not exploit global properties of the objective function (such as concavity) in order to establish global convergence results.
Even though physical intuition suggests that this should be easily possible, the mathematical analysis is quite convoluted due to the boundary behavior of the covariant correction term of \eqref{eq:ID} (the second term of \eqref{eq:ID-E} which acts as a contact force that constrains the trajectories of \eqref{eq:ID-E} to $S$).

The main difficulty is that a Lyapunov-type argument relying on the minimization of the system's total energy $E = \kin - \pot$ does not suffice to exclude convergence to a point $\omlim\in\strat$ that is a local maximizer of $\pot$ on the subface of $\strat$ that is spanned by $\supp(\omlim)$.
In the first-order case, this phenomenon is ruled out by using the Bregman divergence $D_{h}(\eq,x) = h(\eq) - h(x) - h'(x;\eq-x)$ as a global Lyapunov function;
in our context however, the obvious candidate $E_{h} = \kin + D_{h}$ does not satisfy a dissipation principle because of the curvature of $\strat$ under the \ac{HR} metric induced by $h$.
\end{remark}

\subsection{Convergence and stability properties in games}
\label{sec:rationality}

We now return to game theory and examine the convergence and stability properties of \eqref{eq:ID} with respect to Nash equilibria.
To that end, recall first that a strategy profile $\eq=(\eq_{1},\dotsc,\eq_{N})\in\strat$ is called a \emph{Nash equilibrium} if it is stable against unilateral deviations, i.e.
\begin{equation}
\label{eq:Nash}
\pay_{k}(\eq)
	\geq \pay_{k}(x_{k};\eq_{-k})
	\quad
	\text{for all $x_{k}\in\strat_{k}$ and for all $k\in\play$,}
\end{equation}
or, equivalently:
\begin{equation}
\label{eq:Nash-components}
\payv_{k\alpha}(\eq)
	\geq \payv_{k\beta}(\eq)
	\quad
	\text{for all $\alpha\in\supp(\eq_{k})$ and for all $\beta\in\act_{k}$, $k\in\play$.}
\end{equation}
If \eqref{eq:Nash} is strict for all $x_{k}\neq\eq_{k}$, $k\in\play$, we say that $\eq$ a \emph{strict equilibrium};
finally, if \eqref{eq:Nash} holds for all $x_{k}\in\strat_{k}$ such that $\supp(x_{k}) \subseteq \supp(\eq_{k})$, we say that $\eq$ is a \emph{restricted equilibrium} \cite{San10}.

Our first result concerns potential games, viewed here simply as a class of (non-convex) optimization problems defined over products of simplices:

\begin{proposition}
\label{prop:potential}
Let $\game\equiv\game(\play,\act,\pay)$ be a potential game with potential function $\pot$, and let $\eq$ be an isolated maximizer of $\pot$ \textup(and, hence, a strict equilibrium of $\game$\textup).
If $\friction>0$ and $x(t)$ is an interior solution of \eqref{eq:ID} that starts close enough to $\eq$ with sufficiently low initial speed $\norm{\dot x(0)}$, then $x(t)$ stays close to $\eq$ for all $t\geq0$ and $\lim_{t\to\infty} x(t) = \eq$.
\end{proposition}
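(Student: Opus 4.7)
The plan is to reduce Proposition~\ref{prop:potential} to a direct application of Theorem~\ref{thm:opt} (together with the extension sketched in Remark~\ref{rem:isolated}). The key observation is that in a potential game, the defining identity \eqref{eq:potential} gives $\payv_{k\alpha}(x) = \pd\pot/\pd x_{k\alpha}$, so the inertial game dynamics \eqref{eq:ID} become, component-wise, exactly the single-agent heavy-ball system \eqref{eq:HBF} for the objective $\pot$ on the product manifold $\strat = \prod_{k} \strat_{k}$ equipped with the product \ac{HR} metric. Because this metric is block-diagonal across players, the gradient, covariant derivative, and energy computations of Section~\ref{sec:friction} all split componentwise and carry over verbatim. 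In particular, Propositions~\ref{prop:dissipation} and \ref{prop:stop}, together with Lemma~\ref{lem:omega} and Proposition~\ref{prop:stationarity}, remain valid on $\strat$; the corresponding per-player form of the stationarity condition reads $\payv_{k\alpha}(\omlim) = \payv_{k\beta}(\omlim)$ for all $\alpha,\beta \in \supp(\omlim_{k})$ and $k \in \play$.

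With this reduction in place, I would proceed along the lines of the proof of Theorem~\ref{thm:opt}. Fix a small neighborhood $\nhd$ of $\eq$ in $\strat$ on which $\eq$ is the unique maximizer of $\pot$, and take an interior initial condition $x(0)$ close to $\eq$ with $\kin(0) = \tfrac12 \norm{\dot x(0)}^{2}$ so small that $\pot(x(0)) - \kin(0) > \pot(\eq) - \delta$ for a prescribed $\delta > 0$. Proposition~\ref{prop:dissipation} forces $\pot(x(t)) \geq \pot(x(0)) - \kin(0)$ for all $t \geq 0$, so by choosing $\delta$ small enough the trajectory is trapped inside $\nhd$ for all time. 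Proposition~\ref{prop:stop} further yields $\dot x(t) \to 0$, so any $\omega$-limit $\omlim$ of $x(t)$ lies in $\cl(\nhd)$ and, by Lemma~\ref{lem:omega} combined with the per-player version of Proposition~\ref{prop:stationarity}, satisfies the restricted stationarity condition.

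To conclude that $\omlim = \eq$, I would invoke the same concavity-type argument as in the proof of Theorem~\ref{thm:opt}: the inequality $\insum_{k,\beta} \pd\pot(\omlim)/\pd x_{k\beta} \cdot (\omlim_{k\beta} - \eq_{k\beta}) \leq \pot(\omlim) - \pot(\eq) \leq 0$ combined with the stationarity identity $\insum_{k,\beta} \pd\pot(\omlim)/\pd x_{k\beta} \cdot (\eq_{k\beta} - \omlim_{k\beta}) = 0$ forces $\pot(\omlim) = \pot(\eq)$, and the isolated maximizer hypothesis then yields $\omlim = \eq$. The main obstacle I anticipate is the containment $\supp(\eq_{k}) \subseteq \supp(\omlim_{k})$ needed to turn the per-player stationarity identity into the cancellation above; this is immediate once one exploits the fact (noted in the proposition's statement) that an isolated maximizer of a potential is automatically a strict Nash equilibrium, hence pure, so shrinking $\nhd$ so that $\omlim_{k,\alpha_{k}^{\ast}} > 0$ at the unique pure action $\alpha_{k}^{\ast}$ supporting $\eq_{k}$ delivers the desired inclusion. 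This is the only place where the multi-player structure enters non-trivially; modulo this vertex argument, the result is the game-theoretic translation of the local convergence statement of Theorem~\ref{thm:opt}.
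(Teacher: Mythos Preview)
Your reduction to Theorem~\ref{thm:opt} and your use of energy dissipation to trap the orbit near $\eq$ are exactly what the paper does, and your identification of the per-player stationarity condition for $\omega$-limits is correct. The approaches coincide up to the final step.

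The gap is in your concavity inequality
\[
\sum_{k,\beta} \frac{\pd\pot}{\pd x_{k\beta}}(\omlim)\,(\omlim_{k\beta}-\eq_{k\beta}) \;\leq\; \pot(\omlim)-\pot(\eq).
\]
This is inherited verbatim from the proof of Theorem~\ref{thm:opt}, where it follows from the strong concavity hypothesis on the face spanned by $\eq$. But the potential of a finite game is \emph{multilinear}, hence never strongly concave on any face of positive dimension, and the inequality actually fails: for instance, in a two-player coordination potential $\pot(x_1,x_2)=x_{11}x_{21}$ with $\eq=(e_1,e_1)$, the inequality reduces to $(1-\omlim_{11})(1-\omlim_{21})\leq 0$, which is false for any $\omlim\neq\eq$ near $\eq$.

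The paper sidesteps this by invoking the alternative criterion in Remark~\ref{rem:isolated}: since $\eq$ is strict, continuity gives $\payv_{k\alpha_k^\ast}(x)>\payv_{k\beta}(x)$ for all $\beta\neq\alpha_k^\ast$ throughout a neighborhood of $\eq$, so no point $x'\neq\eq$ in that neighborhood can satisfy the restricted stationarity condition. You already have the ingredients for this (you note $\eq$ is pure and that $\alpha_k^\ast\in\supp(\omlim_k)$); you just need to combine them directly with stationarity to force $\supp(\omlim_k)=\{\alpha_k^\ast\}$, rather than routing through the concavity bound.
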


\begin{proof}
In the presence of a potential function $\pot$ as in \eqref{eq:potential}, the dynamics \eqref{eq:ID} become $D^{2}x/Dt^{2} = \grad\pot - \friction \dot x$ for $x\in\strat\equiv\prod_{k}\simplex(\act_{k})$, so our claim essentially follows as in Theorem \ref{thm:opt}:
Propositions \ref{prop:stop} and \ref{prop:stationarity} extend trivially to the case where $\strat$ is a product of simplices, and, by multilinearity of the game's potential, it follows that there are no other stationary points of \eqref{eq:ID} near a strict equilibrium of $\game$ (cf. Remark \ref{rem:isolated}).
As a result, any trajectory of \eqref{eq:ID} which starts close to a strict equilibrium $\eq$ of $\game$ and always remains in its vicinity will eventually converge to $\eq$;
since trajectories which start near $\eq$ with sufficiently low kinetic energy $\kin(0)$ have this property, our claim follows.
\hfill
\end{proof}

On the other hand, Proposition \ref{prop:potential} does not say much for general, non-potential games.

More generally, if the game does not admit a potential function, the most well-known stability and convergence result is the so-called ``folk theorem'' of evolutionary game theory \cite{HS98,HS03} which states that, under the replicator dynamics \eqref{eq:RD}:
\begin{enumerate}
[I.]
\item
A state is stationary if and only if it is a restricted equilibrium.
\item
If an interior solution orbit converges, its limit is Nash.
\item
If a point is Lyapunov stable, then it is also Nash.
\item
A point is asymptotically stable if and only if it is a strict equilibrium.
\end{enumerate}
In the context of the inertial game dynamics \eqref{eq:ID}, we have:

\begin{theorem}
\label{thm:folk}
Let $\game\equiv\game(\play,\act,\pay)$ be a finite game, let $x(t)$ be a solution orbit of \eqref{eq:ID} that exists for all time, and let $\eq\in\strat$.
Then:
\begin{enumerate}
[\upshape I.]
\item
$x(t) = \eq$ for all $t\geq0$ if and only if $\eq$ is a restricted equilibrium of $\game$ \textup(i.e. $\payv_{k\alpha}(\eq)=\max\{\payv_{k\beta}(\eq): \eq_{k\beta}>0\}$ whenever $\eq_{k\alpha}>0$\textup).
\item
If $x(0)\in\intstrat$ and $\lim_{t\to\infty} x(t) = \eq$, then $\eq$ is a restricted equilibrium of $\game$.
\item
If every neighborhood $\nhd$ of $\eq$ in $\strat$ admits an interior orbit $x_{\nhd}(t)$ such that $x_{\nhd}(t) \in \nhd$ for all $t\geq0$, then $\eq$ is a restricted equilibrium of $\game$.
\item
If $\eq$ is a strict equilibrium of $\game$ and $x(t)$ starts close enough to $\eq$ with sufficiently low speed $\norm{\dot x(0)}$, then $x(t)$ remains close to $\eq$ for all $t\geq0$ and $\lim_{t\to\infty} x(t) = \eq$.
\end{enumerate}
\end{theorem}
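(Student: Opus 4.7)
The plan is to dispatch the four assertions of Theorem \ref{thm:folk} in order, recycling the single-agent analysis of Sections \ref{sec:asymptotics} and \ref{sec:optimization} player-by-player. For Part I, I would substitute $\dot x \equiv 0$ and $\ddot x \equiv 0$ into \eqref{eq:ID}. Since $1/\theta_{k\alpha}''(\eq) = 0$ for $\alpha \notin \supp(\eq_k)$ by Legendre steepness, on the face $\eqset$ spanned by $\supp(\eq)$ the stationarity equation reduces to
\[
\payv_{k\alpha}(\eq)
        \;=\;
        \sum_{\beta \in \supp(\eq_k)} \frac{\Theta_k''(\eq)}{\theta_{k\beta}''(\eq)}\,\payv_{k\beta}(\eq)
        \qquad \text{for every } \alpha \in \supp(\eq_k).
\]
The right-hand side is a convex combination of the support payoffs (and, crucially, independent of $\alpha$), so the equality forces $\payv_{k\alpha}(\eq)$ to be constant over $\alpha \in \supp(\eq_k)$; this is precisely the restricted-equilibrium condition, and the converse follows by running the same algebra backwards.

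Part II is immediate from Proposition \ref{prop:stationarity} applied player-by-player with each $\pay_k$ in the role of $\pot$: if $x(t) \to \eq$ then every $\delta$-ball around $\eq$ eventually traps the orbit, so the long-sojourn hypothesis is trivially met and Proposition \ref{prop:stationarity} yields $\payv_{k\alpha}(\eq) = \payv_{k\beta}(\eq)$ for all $\alpha, \beta \in \supp(\eq_k)$. For Part III, I would combine Lemma \ref{lem:omega} with continuity: take a sequence of shrinking neighborhoods $\nhd_n$ with $\bigcap_n \nhd_n = \{\eq\}$ and corresponding interior orbits $x_n$ satisfying $x_n(t) \in \nhd_n$ for all $t \geq 0$; each $x_n$ admits an $\omega$-limit $\omlim_n \in \cl(\nhd_n)$, and Lemma \ref{lem:omega} supplies the long-sojourn hypothesis, so by Proposition \ref{prop:stationarity} each $\omlim_n$ is a restricted equilibrium. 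Since $\omlim_n \to \eq$ and $\supp(\omlim_n) \supseteq \supp(\eq)$ for large $n$ (by lower semicontinuity of the support), the payoff equalities at $\omlim_n$ pass to the limit by continuity of $\payv_{k\alpha}$, giving the restricted-equilibrium condition at $\eq$.

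Part IV is the main obstacle. A strict equilibrium is necessarily a pure profile $\eq = (\bvec_{\alpha_1}, \dots, \bvec_{\alpha_N})$ with a uniform payoff gap $\payv_{k\alpha_k}(\eq) - \payv_{k\beta}(\eq) \geq \varepsilon > 0$ for every $\beta \neq \alpha_k$, and by continuity this gap persists on a neighborhood $\nhd$ of $\eq$. The plan is to pass to Euclidean coordinates via \eqref{eq:ID-E} and exploit the fact that near the vertex $\eq$ the weights $\Theta_k''/\theta_{k\beta}''$ collapse to a Dirac mass at $\alpha_k$ (since $\theta_{k\beta}'' \to \infty$ at the boundary while $\theta_{k\alpha_k}''(\eq)$ is finite). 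Consequently the drift in \eqref{eq:ID-E} for each off-support coordinate $\xi_{k\beta}$, $\beta \neq \alpha_k$, satisfies an estimate of the type \eqref{eq:payest} on $\nhd$, and — combined with friction $\friction > 0$ and the velocity bound \eqref{eq:ineq-vel} of Lemma \ref{lem:ineq-diff} — acts as a coercive force that drives $x_{k\beta}$ monotonically towards $0$. Part II then identifies the limit as the unique restricted equilibrium inside $\nhd$, namely $\eq$ itself (any nearby restricted equilibrium with larger support would need $\payv_{k\alpha_k} = \payv_{k\beta}$ at that point, contradicting the payoff gap).

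The truly hard step is confining the trajectory to $\nhd$ in the first place: without a global potential, Proposition \ref{prop:dissipation} is not available to bound the total energy. I would instead proceed by a bootstrap argument, starting the trajectory in a much smaller neighborhood $\nhd_0 \Subset \nhd$ with $\smallnorm{\dot x(0)}$ correspondingly small, and using the coercive drift together with the velocity estimate \eqref{eq:ineq-vel} to show that no off-support coordinate can leave $\nhd$ before the negative acceleration pulls it back. Calibrating the three smallness parameters — the radius of $\nhd_0$, the initial distance $\dist(x(0), \eq)$, and $\smallnorm{\dot x(0)}$ — so that this bootstrap closes is the main technical ingredient, and is the place where the present Lyapunov-free setting is markedly more delicate than the potential case of Proposition \ref{prop:potential}.
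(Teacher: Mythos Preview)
Your architecture is sound for Parts~I and~IV, but Parts~II and~III invoke Proposition~\ref{prop:stationarity} and Lemma~\ref{lem:omega} as black boxes, and neither is available in the general (non-potential) setting. Both are established in Section~\ref{sec:optimization} under the standing assumption $\payv = \nabla\pot$, and both proofs hinge on Proposition~\ref{prop:stop} (that $\dot x(t)\to 0$), which is proved via energy dissipation (Proposition~\ref{prop:dissipation}). Your ``player-by-player with $\pay_k$ in the role of $\pot$'' does not repair this: the per-player energy $E_k = \tfrac{1}{2}\norm{\dot x_k}^2 - \pay_k(x)$ is not Lyapunov because $\pay_k$ depends on $x_{-k}(t)$, so the cross-term $\braket{d_{-k}\pay_k}{\dot x_{-k}}$ enters $\dot E_k$ with no sign. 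The paper explicitly flags this obstruction in a footnote to the proof of Theorem~\ref{thm:folk}.

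The paper's remedy is to bypass Propositions~\ref{prop:stationarity} and~\ref{prop:stop} entirely and apply Lemma~\ref{lem:ineq-diff} \emph{directly}. The hypothesis of Part~III furnishes an interior orbit trapped in the small neighborhood $\nhd$ for \emph{all} $t\geq 0$; on this single infinite interval the estimate $\ddot\xi_{k\alpha} + \friction\dot\xi_{k\alpha} < -m$ (from the payoff gap at a putative suboptimal $\alpha\in\supp(\eq_k)$) forces $\xi_{k\alpha}(t)\to-\infty$ by Lemma~\ref{lem:ineq-diff}, irrespective of the initial velocity $\dot\xi_{k\alpha}(0)$. This is precisely where the general statement of Proposition~\ref{prop:stationarity} needed $\dot\xi_\alpha(a)$ small (its intervals are merely long, not infinite), which is why velocity decay was required there but not here. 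Part~II then follows from Part~III immediately (a convergent orbit is eventually trapped in every neighborhood), so your detour through sequences of $\omega$-limits and limit-passage of restricted equilibria is both unnecessary and unjustified. Your Part~IV sketch is essentially the paper's; note only that the argument does not in fact require $\friction>0$, since Lemma~\ref{lem:ineq-diff} handles the frictionless case equally well.
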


\smallskip

\begin{proofof}{Proof of Theorem \ref{thm:folk}}

We begin with the stationarity of restricted Nash equilibria.
Clearly, extending the dynamics \eqref{eq:ID} to $\bd(\strat)$ in the obvious way, it suffices to consider interior stationary equilibria.
Accordingly, if $\eq\in\intstrat$ is Nash, we will have $\payv_{k\alpha}(\eq) = \payv_{k\beta}(\eq)$ for all $\alpha,\beta\in\act_{k}$, and hence also $\payv_{k\alpha}(\eq) = \insum_{\beta}^{k} (\Theta_{k}''/\theta_{k\beta}'') \payv_{k\beta}(\eq)$ for all $\alpha\in\act_{k}$.
Furthermore, with $\theta_{\alpha}''(\eq) > 0$, the velocity-dependent terms of \eqref{eq:ID} will also vanish if $\dot x_{k\alpha}(0) = 0$ for all $\alpha\in\act_{k}$, so the initial conditions $x(0) = \eq$, $\dot x(0) = 0$, imply that $x(t) = \eq$ for all $t\geq0$.
Conversely, if $x(t) = \eq$ for all time, then we also have $\dot x(t) = 0$ for all $t\geq0$, and hence $\payv_{k\alpha}(\eq) = \insum_{\beta}^{k} (\Theta_{k}''/\theta_{k\beta}'') \payv_{k\beta}(\eq)$ for all $\alpha\in\act_{k}$, i.e. $\eq$ is an equilibrium of $\game$.

For Part II of the theorem, note that if an interior trajectory $x(t)$ converges to $\eq\in\strat$, then every neighborhood $\nhd$ of $\eq$ in $\strat$ admits an interior orbit $x_{\nhd}(t)$ such that $x_{\nhd}(t)$ stays in $\nhd$ for all $t\geq0$, so the claim of Part II is subsumed in that of Part III.
To that end, assume ad absurdum that $\eq$ has the property described above without being a restricted equilibrium, i.e. there exists $\alpha\in\supp(\eq_{k})$ with $\payv_{k\alpha}(\eq) < \max_{\beta} \{\payv_{k\beta}(\eq)\}$.
As in the proof of Proposition \ref{prop:stationarity},%
\footnote{Note here that Proposition \ref{prop:stationarity} does not apply directly because the dynamics \eqref{eq:ID} need not be conservative.}
let $\nhd$ be a small enough neighborhood of $\eq$ in $\strat$ such that
\begin{equation}
\label{eq:paydiff}
\theta_{k\alpha}''(x)^{-1/2}
	\left[
	\payv_{k\alpha}(x) - \insum_{\beta}^{k} \left(\Theta_{k}''(x)\big/\theta_{k\beta}''(x)\right) \payv_{k\beta}(x)
	\right]
	< m < 0
\end{equation}
for all $x\in \nhd$.
Then, with $x(t)\in \nhd$ for all $t\geq0$, the Euclidean presentation \eqref{eq:ID-E} of the inertial dynamics \eqref{eq:ID} readily gives
\begin{equation}
\ddot \xi_{k\alpha}
	\leq -m
	+\frac{1}{2} \frac{1}{\sqrt{\theta_{k\alpha}''}} \insum_{\beta}^{k} \Theta_{k}''\theta_{k\beta}'''\big/(\theta_{k\beta}'')^{2} \dot\xi_{k\beta}^{2}
	- \friction \dot\xi_{k\alpha}
	< -m - \friction \dot\xi_{k\alpha}
\quad
\text{for all $t\geq0$},
\end{equation}
so, by Lemma \ref{lem:ineq-diff}, we obtain $\xi_{k\alpha}(t) \to -\infty$ as $t\to\infty$.
However, the definition \eqref{eq:EC} of the Euclidean coordinates $\xi_{k\alpha}$ shows that $x_{k\alpha}(t)\to0$ if $\xi_{k\alpha}(t)\to-\infty$, and since $\eq_{k\alpha}>0$ by assumption, we obtain a contradiction which establishes our original claim.

Finally, for Part IV of the theorem, let $\eq = (\alpha_{1}^{\ast},\dotsc,\alpha_{N}^{\ast})$ be a strict equilibrium of $\game$ (recall that only vertices of $\strat$ can be strict equilibria).
We will show that if $x(t)$ starts at rest ($\dot x(0) = 0$) and with initial Euclidean coordinates $\xi_{k\mu}(0)$, $\mu\in\act_{k}^{\ast} \equiv \act_{k}\exclude\{\alpha_{k}^{\ast}\}$ that are sufficiently close to their lowest possible value $\xi_{k,0}\equiv\inf\{\phi_{k}'(x): x>0\}$,%
\footnote{By the definition of the Euclidean coordinates $\xi_{k\alpha} = \phi_{k}'(x_{k\alpha})$, this condition is equivalent to $x(t)$ starting at a small enough neighborhood of $\eq$.}
then $x(t)\to q$ as $t\to\infty$.
Our proof remains essentially unchanged (albeit more tedious to write down)
if the (Euclidean) norm of the initial velocity $\dot\xi(0)$ of the trajectory is bounded by some sufficiently small constant $\delta>0$, so the theorem follows by recalling that $\smallnorm{\dot x(0)} = \smallnorm{\dot \xi(0)}$.

Indeed, let $\nhd$ be a neighborhood of $\eq$ in $\strat$ such that \eqref{eq:paydiff} holds for all $x\in \nhd$ and for all $\mu\in\act_{k}^{\ast} \equiv \act_{k}\exclude\{\alpha_{k}^{\ast}\}$ substituted in place of $\alpha$.
Moreover, let $\nhd'=G(\nhd)$ be the image of $\nhd$ under the Euclidean embedding $\xi = G(x)$ of Eq.~\eqref{eq:EC}, and let $\tau_{\nhd} = \inf\{t: x(t) \notin \nhd\} = \inf\{t:\xi(t) \notin \nhd'\}$ be the first escape time of $\xi(t) = G(x(t))$ from $\nhd'$.
Assuming $\tau_{\nhd} < +\infty$ (recall that $\xi(t)$ is assumed to exist for all $t\geq0$), we have $x_{k\mu}(\tau_{\nhd}) \geq x_{k\mu}(0)$ and hence $\xi_{k\mu}(\tau_{\nhd}) \geq \xi_{k\mu}(0)$ for some $k\in\play$, $\mu\in\act_{k}^{\ast}$;
consequently, there exists some $\tau' \in (0,\tau_{\nhd}')$ such that $\dot\xi_{k\mu}(\tau') \geq 0$.
By the definition of $\nhd$, we also have $\ddot \xi_{k\mu} + \friction \dot \xi_{k\mu} < -m < 0$ for all $t\in(0,\tau_{\nhd})$, so, with $\dot \xi(0) = 0$, the bound \eqref{eq:ineq-vel} in the proof of Lemma \ref{lem:ineq-diff} readily yields $\dot\xi_{k\mu}(\tau')  < 0$, a contradiction.%
\footnote{One simply needs to consider the escape time $\tilde\tau$ from a larger neighborhood $\tilde \nhd$ of $q$ chosen so that if $\smallabs{\dot\xi_{k\mu}(0)} < \delta$ for some sufficiently small $\delta>0$, then the bound \eqref{eq:velineq} guarantees the existence of a non-positive rate of change $\dot\xi_{k\mu}(\tau_{0})$ for some $\tau_{0}<\tilde\tau$.}
We thus conclude that $\tau_{\nhd} = +\infty$, so we also get $\ddot\xi_{k\mu} + \friction \dot\xi_{k\mu} < -m <0$ for all $k\in\play$, $\mu\in\act_{k}^{\ast}$, and for all $t\geq0$.
Lemma \ref{lem:ineq-diff} then gives $\lim_{t\to\infty} \xi_{k\mu}(t) = -\infty$, i.e. $x(t)\to \eq$, as claimed.
\hfill
\end{proofof}

\smallskip

Theorem \ref{thm:folk} is our main rationality result for asymmetric (multi-population) games, so some remarks are in order:

\begin{remark}
Performing a point-to-point comparison between the first-order ``folk theorem'' of \cite{HS98,HS03} for \eqref{eq:RD} and Theorem \ref{thm:folk} for \eqref{eq:ID}, we may note the following:

\smallskip

Part I of Theorem \ref{thm:folk} is tantamount to the corresponding first-order statement.

\smallskip

Part II differs from the first-order case in that it allows convergence to non-Nash stationary profiles.
For $\friction=0$, the reason for this behavior is that if a trajectory $x(t)$ starts close to a restricted equilibrium $\eq$ with an initial velocity pointing towards $\eq$, then $x(t)$ may escape towards $\eq$ if there is only a vanishingly small force pushing $x(t)$ away from $\eq$.
We have not been able to find such a counterexample for $\friction>0$ and we conjecture that even a small amount of friction prohibits convergence to non-Nash profiles.

\smallskip

Part III only posits the existence of a single interior trajectory that stays close to $\eq$, so it is a less stringent requirement than Lyapunov stability;
on the other hand, and for the same reasons as before, this condition does not suffice to exclude non-Nash stationary points of \eqref{eq:ID}.

\smallskip

Part IV is not exactly the same as the corresponding first-order statement because the notion of asymptotic stability is quite cumbersome in a second-order setting.
Theorem \ref{thm:folk} shows instead that if $x(t)$ starts close to $\eq$ and with sufficiently low speed $\norm{\dot x(0)}$ (or, equivalently, sufficiently low kinetic energy $\kin(0) = \frac{1}{2}\norm{\dot x(0)}^{2}$), then $x(t)$ remains close to $\eq$ and $\lim_{t\to\infty} x(t) = \eq$.
This result continues to hold when restricting \eqref{eq:ID} to any subface $\strat'$ of $\strat$ containing $\eq$, so this can be seen as a form of asymptotic stability for $\eq$.%
\footnote{This could be formalized by considering the phase space obtained by joining the phase space of \eqref{eq:ID} with that of every possible restriction of \eqref{eq:ID} to a subface $\strat'$ of $\strat$, but this is a rather tedious formulation (see also the relevant remark following Theorem \ref{thm:opt}).}
\end{remark}

\begin{remark}
Finally, we note that Theorem \ref{thm:folk} does not require a positive friction coefficient $\friction>0$, in stark contrast to the convergence result of Theorem \ref{thm:opt}.
The reason for this is that convergence to strict equilibria corresponds to the Euclidean trajectories of \eqref{eq:ID-E} escaping towards infinity, so friction is not required to ensure convergence.
As such, Part IV of Theorem \ref{thm:folk} also extends Proposition \ref{prop:potential} to the frictionless case $\temp=0$.
\end{remark}

We close this section with a brief discussion of the rationality properties of \eqref{eq:ID} in the class of symmetric (single-population) games, i.e. $2$-player games where $\act_{1} = \act_{2} = \act$ for some finite set $\act$ and $x_{1} = x_{2}$ \cite{HS98,San10,Wei95}.%
\footnote{In the ``mass-action'' interpretation of evolutionary game theory, this class of games simply corresponds to intra-species interactions in a single-species population \cite{Wei95}.}
In this case, a fundamental equilibrium refinement due to Maynard Smith and Price \cite{MS74,MSP73} is the notion of an \acf{ESS}, i.e. a state that cannot be invaded by a small population of mutants;
formally, we say that $\eq\in\strat\equiv\simplex(\act)$ is \emph{evolutionarily stable} if there exists a neighborhood $\nhd$ of $\eq$ in $\strat$ such that:
\begin{subequations}
\label{eq:ESS}
\begin{flalign}
\label{eq:ESS-Nash}
\pay(\eq,\eq)
	&\geq \pay(x,\eq)
	\quad
	\text{for all $x\in\strat$},
	\\
\label{eq:ESS-invasion}
\pay(\eq,\eq)
	&= \pay(x,\eq)
	\quad
	\text{implies that $\pay(\eq,x) > \pay(x,x)$},
\end{flalign}
\end{subequations}
where $\pay(x,y) = x^{\trans} U y$ is the game's payoff function and $U = (U_{\alpha\beta})_{\alpha,\beta\in\act}$ is the game's payoff matrix.%
\footnote{Intuitively, \eqref{eq:ESS-Nash} implies that $\eq$ is a symmetric Nash equilibrium of the game while \eqref{eq:ESS-invasion} means that $\eq$ performs better against any alternative best reply $x$ than $x$ performs against itself.}
We then have:

\begin{proposition}
\label{prop:ESS}
With notation as above, let $\eq$ be an \ac{ESS} of a symmetric game with symmetric payoff matrix.
Then, provided that $\friction>0$, $\eq$ attracts all interior trajectories of \eqref{eq:ID} that start near $\eq$ and with sufficiently low speed $\norm{\dot x(0)}$.
\end{proposition}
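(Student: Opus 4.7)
The plan is to reduce Proposition \ref{prop:ESS} to Theorem \ref{thm:opt} via the potential structure induced by the symmetry of $U$. Since $U = U^{\trans}$, the quadratic form $\pot(x) \defeq \tfrac{1}{2}\, x^{\trans} U x$ satisfies $\pd_{\alpha}\pot(x) = (Ux)_{\alpha} = \payv_{\alpha}(x)$, so the symmetric single-population game is a potential game and \eqref{eq:ID} on $\strat = \simplex(\act)$ takes the heavy-ball form $D^{2}x/Dt^{2} = \grad\pot - \friction\,\dot x$.

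The first step is to recall the classical Hofbauer\textendash Sigmund characterization of \aclp{ESS} in doubly symmetric games: $\eq$ is an \ac{ESS} if and only if it is a strict local maximizer of $\pot$ on $\strat$. Concretely, writing $v = x-\eq$,
\[
\pot(x) - \pot(\eq) = \product{U\eq}{v} + \tfrac{1}{2}\, v^{\trans} U v;
\]
the Nash condition \eqref{eq:ESS-Nash} forces the linear term to be non-positive on the simplex (vanishing exactly on alternative best replies), while the invasion condition \eqref{eq:ESS-invasion} forces $v^{\trans} U v < 0$ on every such direction. Combining the two yields $\pot(x) < \pot(\eq)$ for all $x\in\strat$ near $\eq$ with $x\neq\eq$, so $\eq$ is isolated as a local maximizer of $\pot$.

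The second step is to verify the Hessian non-degeneracy hypothesis of Theorem \ref{thm:opt} at $\eq$, namely that $v^{\trans}\hess(\pot)(\eq)\,v = v^{\trans} U v$ is definite (in the sign appropriate for a local maximum) on the tangent space $T_{\eq} F(\eq) = \{v\in V : \insum_{\alpha} v_{\alpha} = 0,\ v_{\alpha}=0 \text{ for } \alpha\notin\supp(\eq)\}$ of the subface spanned by $\supp(\eq)$. For any such $v\neq0$, the perturbation $\eq+tv$ lies on $F(\eq)$ for small $\smallabs{t}$ and, since $(U\eq)_{\alpha}$ is constant on $\supp(\eq)$ by the Nash condition, $\product{U\eq}{\eq+tv} = \product{U\eq}{\eq}$, so $\eq+tv$ is an alternative best reply to $\eq$. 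The invasion condition then gives $\pay(\eq,\eq+tv) > \pay(\eq+tv,\eq+tv)$, which after expansion collapses to $-t^{2} v^{\trans} U v > 0$, i.e.~$v^{\trans} U v < 0$, the required definiteness.

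The third and final step is simply to invoke Theorem \ref{thm:opt} with the above potential $\pot$ and $\friction>0$: any interior solution of \eqref{eq:ID} starting close enough to $\eq$ with sufficiently low initial speed $\smallnorm{\dot x(0)}$ converges to $\eq$. The main obstacle is the second step, since \eqref{eq:ESS-invasion} is phrased in strategic terms ($\pay(\eq,x)$ vs.~$\pay(x,x)$) and must be translated into a purely Hessian statement; the key observation that unlocks the translation is that the tangent directions to $F(\eq)$ at $\eq$ correspond precisely to alternative best-reply deformations, which lets the ESS inequality speak directly to $\hess(\pot)(\eq)$ restricted to $T_{\eq} F(\eq)$.
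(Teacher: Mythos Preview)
Your argument is correct and follows the same route as the paper: both proofs observe that symmetry of $U$ makes $\pot(x)=\tfrac12\,x^{\trans}Ux$ a potential for the single-population dynamics and then invoke Theorem~\ref{thm:opt}. The only difference is cosmetic: the paper cites Taylor's variational characterization of an \ac{ESS}, namely $\braket{\payv(x)}{x-\eq}<0$ for $x$ near $\eq$, to supply the key inequality in the proof of Theorem~\ref{thm:opt} directly (cf.\ Remark~\ref{rem:isolated}), whereas you work from the raw ESS definition \eqref{eq:ESS} and verify the Hessian hypothesis of Theorem~\ref{thm:opt} by hand --- both amount to showing $v^{\trans}Uv<0$ on the tangent space of the face spanned by $\supp(\eq)$.
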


\begin{proof}
Following \cite{Tay79}, recall that $\eq$ is an \ac{ESS} if and only if there exists a neighborhood $\nhd$ of $\eq$ in $\strat$ such that
\begin{equation}
\label{eq:ESS-VI}
\braket{\payv(x)}{x - \eq}
	< 0
	\quad
	\text{for all $x\in\nhd\exclude\{\eq\}$},
\end{equation}
where $\payv_{\alpha}(x) = \pay(\alpha,x)$ denotes the average payoff of the $\alpha$-th strategy in $x\in\strat$.
Since the game's payoff matrix is symmetric, we will also have $\payv(x) = \frac{1}{2}\nabla\pay(x,x)$, so $\eq$ is a local maximizer of $\pay$;
as a result, the conditions of Theorem \ref{thm:opt} are satisfied and our claim follows.
\hfill
\end{proof}

\section{Discussion}
\label{sec:discussion}

To summarize, the class of inertial game dynamics considered in this paper exhibits some unexpected properties.
First and foremost, in the case of the replicator dynamics, the inertial system \eqref{eq:IRD} does not coincide with the second-order replicator dynamics of exponential learning \eqref{eq:RD-2};
in fact, the dynamics \eqref{eq:IRD} are not even well-posed, so the rationality properties of \eqref{eq:RD-2} do not hold in that case.
On the other hand, by considering a different geometry on the simplex, we obtain a well-posed class of game dynamics with several local convergence and stability properties, some of which do not hold for \eqref{eq:RD-2} (such as the asymptotic stability of \acp{ESS} in symmetric, single-population games).

Having said that, we still have several open questions concerning the dynamics' \emph{global} properties.
From an optimization viewpoint, the main question that remains is whether the dynamics converge globally to a maximum point in the case of concave functions;
in a game-theoretic framework, the main issue is the elimination of stricly dominated strategies (which is true in both \eqref{eq:RD} and \eqref{eq:RD-2}) and, more interestingly, that of \emph{weakly} dominated strategies (which holds under \eqref{eq:RD-2} but not under \eqref{eq:RD}).
A positive answer to these questions (which we expect is the case) would imply that the class of inertial game dynamics combines the advantages of both first- and second-order learning schemes in games, thus collecting a wide array of long-term rationality properties under the same umbrella.

\appendix

\section{Elements of Riemannian geometry}
\label{app:geometry}

In this section, we give a brief overview of the geometric notions used in the main part of the paper following the masterful account of \cite{Lee97}.

Let $W = \R^{n+1}$ and let $W^{\ast}$ be its dual.
A \emph{scalar product} on $W$ is a bilinear pairing $\product{\argdot}{\argdot}\from W\times W\to \R$ such that for all $w,z\in W$:
\begin{enumerate}
\item
$\product{w}{z} = \product{z}{w}$ (\emph{symmetry}).
\item
$\product{w}{w} \geq 0$ with equality if and only if $w=0$ (\emph{positive-definiteness}).
\end{enumerate}
By linearity, if $\{\bvec_{\alpha}\}_{\alpha=0}^{n}$ is a basis for $W$ and $w = \insum_{\alpha=0}^{n} w_{\alpha} \bvec_{\alpha}$, $z = \insum_{\beta=0}^{n} z_{\beta} \bvec_{\beta}$, we have
\begin{equation}
\label{eq:scalar}
\smallproduct{w}{z}
	= \sum_{\alpha,\beta=0}^{n} g_{\alpha\beta} w_{\alpha} z_{\beta},
\end{equation}
where the so-called \emph{metric tensor} $g_{\alpha\beta}$ of the scalar product $\product{\argdot}{\argdot}$ is defined as
\begin{equation}
\label{eq:metric}
g_{\alpha\beta}
	= \smallproduct{\bvec_{\alpha}}{\bvec_{\beta}}.
\end{equation}
Likewise, the \emph{norm} of $w\in W$ is defined as
\begin{equation}
\label{eq:norm}
\txs
\norm{w}
	= \product{w}{w}^{1/2}
	= \left(\insum_{\alpha,\beta=0}^{d} g_{\alpha\beta} w_{\alpha} w_{\beta} \right)^{1/2}.
\end{equation}

Now, if $\open$ is an open set in $W$ and $x\in\open$, the \emph{tangent space} to $\open$ at $x$ is simply the (pointed) vector space $T_{x}\open \equiv \{(x,w):w\in W\} \cong W$ of \emph{tangent vectors} at $x$;
dually, the \emph{cotangent space} to $\open$ at $x$ is the dual space $T_{x}^{\ast} \open \equiv (T_{x}\open)^{\ast} \cong W^{\ast}$ of all linear forms on $T_{x}\open$ (also known as \emph{cotangent vectors}).
Fibering the above constructions over $\open$,
a \emph{vector field} (resp. \emph{differential form}) is then a smooth assignment $x\mapsto w(x) \in T_{x}\open$ (resp. $x\mapsto \omega(x) \in T_{x}^{\ast}\open$),
and the space of vector fields (resp. differential forms) on $\open$ will be denoted by $\sections(\open)$ (resp. $\sections^{\ast}(\open)$).

Given all this, a \emph{Riemannian metric} on $\open$ is a smooth assignment of a scalar product to each tangent space $T_{x}\open$, i.e. a smooth field of (symmetric) positive-definite metric tensors $g_{\alpha\beta}(x)$ prescribing a scalar product between tangent vectors at each $x\in \open$.
Furthermore, if $f\from\open\to\R$ is a smooth function on $\open$, the \emph{differential} of $f$ at $x$ is defined as the (unique) differential form $df(x) \in T_{x}^{\ast} \open$ such that
\begin{equation}
\label{eq:diff-def}
\left.\frac{d}{dt}\right\vert_{t=0} f(\gamma(t))
	= \braket{df(x)}{\dot\gamma(0)}
\end{equation}
for every smooth curve $\gamma\from(-\eps,\eps)\to \open$ with $\gamma(0) = x$.
Dually, given a Riemannian metric on $\open$, the \emph{gradient} of $f$ at $x$ is then defined as the (unique) vector $\grad f(x)\in T_{x}\open$ such that
\begin{equation}
\label{eq:grad-def}
\left.\frac{d}{dt}\right\vert_{t=0} f(\gamma(t))
	= \product{\grad f(x)}{\dot \gamma(0)}
\end{equation}
for all smooth curves $\gamma(t)$ as above.

Combining \eqref{eq:diff-def} and \eqref{eq:grad-def}, we see that $df(x)$ and $\grad f(x)$ satisfy the fundamental duality relation:
\begin{equation}
\label{eq:diff-grad}
\braket{df(x)}{w}
	= \product{\grad f(x)}{w}
	\quad
	\text{for all $w\in T_{x}\open$.}
\end{equation}
Hence, by writing everything out in coordinates and rearranging, we obtain
\begin{equation}
\label{eq:grad}
\left( \grad f(x) \right)_{\alpha}
	= \sum_{\beta=0}^{n} g^{\alpha\beta}(x) \frac{\pd f}{\pd x_{\beta}},
\end{equation}
where
\begin{equation}
\label{eq:ginv}
g^{\alpha\beta}(x)
	= g_{\alpha\beta}^{-1}(x)
\end{equation}
denotes the inverse matrix of the metric tensor $g_{\alpha\beta}(x)$.
For simplicity, we will often write this equation as $\grad f = g^{-1} \nabla f$ where $\nabla f = (\pd_{\alpha}f)_{\alpha=0}^{n}$ denotes the array of partial derivatives of $f$.

In view of the above, differentiating a function $f\in C^{\infty}(\open)$ along a vector field $w\in\sections(\open)$ simply amounts to taking the directional derivative $w(f) \equiv \braket{df}{w} = \product{\grad f}{w}$.
On the other hand, to differentiate a vector field along another, we will need the notion of a (linear) \emph{connection} on $\open$, viz. a map
\begin{equation}
\label{eq:connection}
\del\from \sections(\open) \times \sections(\open) \to \sections(\open)
\end{equation}
written $(w,z)\mapsto \del_{w} z$, and such that:
\begin{subequations}
\label{eq:conn-properties}
\begin{enumerate}
\item
$\del_{f_{1} w_{1} + f_{2} w_{2}} z = f_{1}\del_{w_{1}}z + f_{2}\del_{w_{2}}z$ for all $f_{1},f_{2}\in C^{\infty}(\open)$.
\item
$\del_{w} (az_{1} + bz_{2}) = a\del_{w} z_{1} + b\del_{w} z_{2}$ for all $a,b\in\R$.
\item
$\del_{w} (fz) = f\cdot\del_{w} z + \del_{w} f \cdot z$ for all $f\in C^{\infty}(\open)$, where $\del_{w} f \eqdef w(f) = \braket{df}{w}$.
\end{enumerate}
\end{subequations}
In this way, $\del_{w} z$ generalizes the idea of differentiating $z$ along $w$ and it will be called the \emph{covariant derivative of $z$ in the direction of $w$}.

In the standard frame $\{\bvec_{\alpha}\}_{\alpha=0}^{n}$ of $T\open$, the defining properties of $\del$ give
\begin{equation}
\label{eq:conn-coords}
\del_{w} z
	= \sum_{\alpha,\beta=0}^{n} w_{\alpha} \frac{\pd z_{\beta}}{\pd x_{\alpha}} \bvec_{\beta} + \sum_{\alpha,\beta,\kappa=0}^{n} \Gamma_{\alpha\beta}^{\kappa} w_{\alpha} z_{\beta} \bvec_{\kappa},
\end{equation}
where the \emph{Christoffel symbols} $\Gamma_{\alpha\beta}^{\kappa}\in C^{\infty}(\open)$ of $\del$ in the frame $\{\bvec_{\alpha}\}$ are defined via the equation
\begin{equation}
\label{eq:Christoffel-general}
\del_{\bvec_{\alpha}} \bvec_{\beta}
	= \sum_{\kappa=0}^{n} \Gamma_{\alpha\beta}^{\kappa} \bvec_{\kappa}.
\end{equation}
Clearly, $\del$ is completely specified by its Christoffel symbols, so there is no canonical connection on $\open$;
however, if $\open$ is also endowed with a Riemannian metric $g$, then there exists a \emph{unique} connection which is \emph{symmetric} (i.e. $\Gamma_{\alpha\beta}^{\kappa} = \Gamma_{\beta\alpha}^{\kappa}$)
and \emph{compatible} with $g$ in the sense that:
\begin{equation}
\label{eq:compatibility}
\del_{w} \product{z_{1}}{z_{2}}
	= \product{\del_{w} z_{1}}{z_{2}}
	+ \product{z_{1}}{\del_{w} z_{2}}
	\quad
	\text{for all $w, z_{1}, z_{2} \in \sections(\open)$.}
\end{equation}
This connection is known as the \emph{Levi-Civita connection} on $\open$, and its Christoffel symbols are given in coordinates by
\begin{equation}
\label{eq:Christoffel}
\Gamma_{\alpha\beta}^{\kappa}
	= \frac{1}{2} \sum_{\rho=0}^{n} g^{\kappa\rho}
	\left(
	\frac{\pd g_{\rho\beta}}{\pd x_{\alpha}}
	+ \frac{\pd g_{\rho\alpha}}{\pd x_{\beta}}
	- \frac{\pd g_{\alpha\beta}}{\pd x_{\rho}}
	\right).
\end{equation}

In view of the above, the \emph{covariant derivative} of a vector field $w\in\sections(U)$ along a curve $\gamma(t)$ on $\open$ is defined as:
\begin{equation}
\frac{Dw}{Dt}
	\equiv \del_{\dot\gamma} w
	\equiv \sum_{\alpha,\beta,\kappa=0}^{n}
	\left(
	\dot w_{\kappa}
	+ \Gamma_{\alpha\beta}^{\kappa} w_{\alpha} \dot\gamma_{\beta}
	\right)
	\bvec_{\kappa}.
\end{equation}
Thus, specializing to the case where $w(t)$ is simply the \emph{velocity} $\vel(t) = \dot\gamma(t)$ of $\gamma$, the \emph{acceleration} of $\gamma$ is defined as $\frac{D^{2}\gamma}{Dt^{2}} = \frac{D\vel}{Dt} = \del_{\dot\gamma} \dot\gamma$ or, in components:
\begin{equation}
\label{eq:acceleration}
\frac{D^{2}\gamma_{\kappa}}{Dt^{2}}
	\equiv \ddot \gamma_{\kappa}
	+ \sum_{\alpha,\beta=0}^{n} \Gamma_{\alpha\beta}^{\kappa} \dot\gamma_{\alpha} \dot\gamma_{\beta}.
\end{equation}

The \emph{kinetic energy} of a curve $\gamma(t)$ is defined simply as $\kin = \frac{1}{2} \norm{\dot\gamma}^{2}$;
in view of the metric compatibility condition \eqref{eq:compatibility}, it is then easy to show that
\begin{equation}
\label{eq:kin-diff}
\dot\kin
	= \product{\frac{D^{2}\gamma}{Dt^{2}}}{\dot\gamma},
\end{equation}
so a curve moves at constant speed ($\dot\kin=0$) if and only if it satisfies the geodesic equation $\frac{D^{2}\gamma}{Dt^{2}} = 0$.
On that account, the definition \eqref{eq:acceleration} of a curve's covariant acceleration is simply a consequence of the fundamental requirement that ``curves with zero acceleration move at constant speed'' (by contrast, note that $\ddot\gamma=0$ does not necessarily imply $\dot\kin=0$, so $\ddot\gamma$ cannot act as a covariant measure of acceleration).

\section{Calculations and proofs}
\label{app:calculations}

In this section, we provide some calculations and proofs that would have otherwise disrupted the flow of the paper.

\subsection{Calculation of the Christoffel symbols}

We begin with a matrix inversion formula that is required for our geometric calculations:

\begin{lemma}
\label{lem:inversion}
Let $A_{\mu\nu} = q_{\mu} \delta_{\mu\nu} + q_{0}$ for some $q_{0}, q_{1},\dotsc,q_{n}>0$.
Then, the inverse matrix $A^{\mu\nu}$ of $A_{\mu\nu}$ is
\begin{equation}
\label{eq:inverse}
A^{\mu\nu}
	= \frac{\delta_{\mu\nu}}{q_{\mu}} - \frac{Q}{q_{\mu}q_{\nu}},
\end{equation}
where $Q$ denotes the harmonic aggregate $Q^{-1}\defeq \sum_{\alpha=0}^{n} q_{\alpha}^{-1}$.
\end{lemma}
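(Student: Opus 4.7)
The claim is a rank-one update identity: the matrix $A_{\mu\nu}=q_\mu\delta_{\mu\nu}+q_0$ is the sum of the diagonal matrix $D=\diag(q_1,\dots,q_n)$ and the dyad $q_0\,\mathbf{1}\mathbf{1}^{\trans}$, so the Sherman\textendash Morrison formula produces \eqref{eq:inverse} after simplification. Rather than invoke that machinery, my plan is to verify the formula directly by multiplying out $\sum_{\nu=1}^{n} A_{\mu\nu}A^{\nu\rho}$ and checking it equals $\delta_{\mu\rho}$, which is both shorter and more transparent.

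Concretely, expand
\begin{equation*}
\sum_{\nu=1}^{n}(q_\mu\delta_{\mu\nu}+q_0)\Bigl(\tfrac{\delta_{\nu\rho}}{q_\nu}-\tfrac{Q}{q_\nu q_\rho}\Bigr)
=q_\mu\Bigl(\tfrac{\delta_{\mu\rho}}{q_\mu}-\tfrac{Q}{q_\mu q_\rho}\Bigr)
+q_0\sum_{\nu=1}^{n}\Bigl(\tfrac{\delta_{\nu\rho}}{q_\nu}-\tfrac{Q}{q_\nu q_\rho}\Bigr),
\end{equation*}
which simplifies to $\delta_{\mu\rho}-Q/q_\rho+q_0/q_\rho-(q_0Q/q_\rho)\sum_{\nu=1}^{n}q_\nu^{-1}$. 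Now use the key bookkeeping identity $\sum_{\nu=1}^{n}q_\nu^{-1}=Q^{-1}-q_0^{-1}$, which follows directly from the definition $Q^{-1}=\sum_{\alpha=0}^{n}q_\alpha^{-1}$ (note that the sum in $A$ runs from $1$ to $n$ while the harmonic aggregate $Q$ includes $q_0$). Substituting this identity collapses the last two terms: $(q_0Q/q_\rho)(Q^{-1}-q_0^{-1})=q_0/q_\rho-Q/q_\rho$, so everything cancels except $\delta_{\mu\rho}$.

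The only step that requires any care is the index-range bookkeeping just noted, since $\mu,\nu$ range over $\{1,\dots,n\}$ while $Q$ is defined by a harmonic sum over $\{0,1,\dots,n\}$. I do not foresee any real obstacle: once that identity is used, the proof is a one-line algebraic collapse.
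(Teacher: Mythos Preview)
Your proposal is correct and follows essentially the same approach as the paper: both proofs verify the formula directly by computing $\sum_{\nu=1}^{n} A_{\mu\nu}A^{\nu\rho}$, expanding the product into four terms, and then collapsing the non-$\delta_{\mu\rho}$ terms using the identity $\sum_{\nu=1}^{n} q_{\nu}^{-1} = Q^{-1} - q_{0}^{-1}$. Your remark on the index-range bookkeeping is exactly the point the paper exploits (implicitly) in its last line.
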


\begin{proof}
By a straightforward verification, we have:
\begin{flalign}
\sum_{\nu=1}^{n} A_{\mu\nu} A^{\nu\rho}
	&= \insum_{\nu=1}^{n} (q_{\mu} \delta_{\mu\nu} + q_{0}) (\delta_{\nu\rho}/q_{\nu} - Q/(q_{\nu}q_{\rho}))
	\notag\\
	&=\sum_{\nu=1}^{n}
	\left(
	q_{\mu} \delta_{\mu\nu} \delta_{\nu\rho} /q_{\nu}
	+ q_{0} \delta_{\nu\rho}/q_{\nu}
	- q_{\mu} Q \delta_{\mu\nu}/(q_{\nu} q_{\rho})
	- q_{0} Q /(q_{\nu} q_{\rho})
	\right)
	\notag\\
	&=\txs \delta_{\mu\rho} + q_{0}q_{\rho}^{-1} - Q q_{\rho}^{-1} - q_{0} Q q_{\rho}^{-1} \insum_{\nu} q_{\nu}^{-1}
	=\delta_{\mu\rho},
\end{flalign}
as claimed.
\hfill
\end{proof}

With this inversion formula at hand, the inverse matrix $\tilde g^{\mu\nu}$ of the metric tensor $\tilde g_{\mu\nu}$ of $g$ in the coordinates \eqref{eq:pi0} will be given by \eqref{eq:metric-inverse}, viz. $\tilde g^{\mu\nu} = \big[ \delta_{\mu\nu} - \Theta''/\theta_{\nu}'' \big] / \theta_{\mu}''$.
Thus, the Christoffel symbols $\tilde\Gamma_{\mu\nu}^{\kappa}$ of $\tilde g$ in the same coordinate chart can be calculated by the expression $\tilde\Gamma_{\mu\nu}^{\kappa} = \insum_{\rho} \tilde g^{\kappa\rho} \tilde\Gamma_{\rho\mu\nu}$ where, in view of \eqref{eq:Christoffel}, the \emph{Christoffel symbols of the first kind} $\tilde\Gamma_{\rho\mu\nu}$ are defined as:
\begin{equation}
\label{eq:Christoffel2}
\tilde\Gamma_{\rho\mu\nu}
	= \frac{1}{2}\left(
	\frac{\pd \tilde g_{\rho\mu}}{\pd w_{\nu}}
	+ \frac{\pd \tilde g_{\rho\nu}}{\pd w_{\mu}}
	- \frac{\pd \tilde g_{\mu\nu}}{\pd w_{\rho}}
	\right).
\end{equation}

Note now that \eqref{eq:metric-reduced} implies that $\tilde g_{\mu\nu} = \frac{\pd^{2} \tilde h}{\pd x_{\mu} \pd x_{\nu}}$ where $\tilde h = h\circ \iota_{0}\from \open \to \R$ is the pull-back of $h$ to $\open$ via $\iota_{0}$.
By the equality of mixed partials, we then obtain:
\begin{equation}
\label{eq:Christoffel2-w}
\tilde\Gamma_{\rho\mu\nu}
	= \frac{1}{2}\frac{\pd^{3} \tilde h}{\pd w_{\rho} \pd w_{\mu} \pd w_{\nu}}
	= \frac{1}{2}\left(\theta_{\rho}''' \delta_{\rho\mu\nu} - \theta_{0}'''\right),
\end{equation}
where $\delta_{\rho\mu\nu} = \delta_{\rho\mu} \delta_{\mu\nu}$ denotes the triagonal Kronecker symbol ($\delta_{\rho\mu\nu} = 1$ if $\rho = \mu = \nu$ and $0$ otherwise) and  $\theta_{\beta}'''$, $\beta=0,1,\dotsc,n$, is shorthand for $\theta_{\beta}'''(x) = \theta'''(x_{\beta})$.
Accordingly, combining \eqref{eq:Christoffel2-w} and \eqref{eq:metric-inverse}, we finally obtain:
\begin{flalign}
\label{eq:Christoffel-w}
\tilde\Gamma_{\mu\nu}^{\kappa}
	&= \sum_{\rho=1}^{n} \tilde g^{\kappa\rho} \tilde\Gamma_{\rho\mu\nu}
	= \frac{1}{2} \sum_{\rho}
	\left(
	\frac{\delta_{\kappa\rho}}{\theta_{\rho}''} - \frac{\Theta''}{\theta_{\rho}''\theta_{k}''}
	\right)
	(\theta_{\rho}''' \delta_{\rho\mu\nu} - \theta_{0}''')
	\notag\\
	&= \frac{1}{2}
	\left[
	\delta_{\kappa\mu\nu} \frac{\theta_{\kappa}'''}{\theta_{\kappa}''}
	-\frac{\Theta'' \theta_{\mu}'''}{\theta_{\kappa}''\theta_{\mu}''} \delta_{\mu\nu}
	-\frac{\theta_{0}'''}{\theta_{\kappa}''}
	+\frac{\Theta'' \theta_{0}'''}{\theta_{\kappa}''}
	\left(\frac{1}{\Theta''} - \frac{1}{\theta_{0}''}\right)
	\right]
	\notag\\
	&=\frac{1}{2}
	\left[
	\delta_{\kappa\mu\nu} \frac{\theta_{\kappa}'''}{\theta_{\kappa}''}
	-\frac{\Theta'' \theta_{\mu}'''}{\theta_{\kappa}''\theta_{\mu}''} \delta_{\mu\nu}
	-\frac{\theta_{0}'''\Theta''}{\theta_{0}''\theta_{\kappa}''}
	\right],
\end{flalign}
where we used the fact that $\sum_{\rho=1}^{n} 1/\theta_{\rho}'' = 1/\Theta'' - 1/\theta_{0}''$ in the second line.
Consequently, we obtain the following expression for the covariant acceleration \eqref{eq:acceleration} of a curve $x(t)$ on $\open$:
\begin{flalign}
\label{eq:acceleration-w}
\frac{D^{2} x_{\kappa}}{Dt^{2}}
	&=\ddot x_{\kappa}
	+ \frac{1}{2} \sum_{\mu,\nu=1}^{n}
	\left[
	\delta_{\kappa\mu\nu} \frac{\theta_{\kappa}'''}{\theta_{\kappa}''}
	-\frac{\Theta'' \theta_{\mu}'''}{\theta_{\kappa}''\theta_{\mu}''} \delta_{\mu\nu}
	-\frac{\theta_{0}'''\Theta''}{\theta_{0}''\theta_{\kappa}''}
	\right]
	\dot x_{\mu} \dot x_{\nu}
	\notag\\
	&= \ddot x_{\kappa}
	+ \frac{1}{2} \frac{\theta_{\kappa}'''}{\theta_{\kappa}''} \dot x_{\kappa}^{2}
	- \frac{1}{2} \frac{\Theta''}{\theta_{\kappa}''}
	\left[
	\sum_{\nu=1}^{n} \frac{\theta_{\nu}'''}{\theta_{\nu}''} \dot x_{\nu}^{2}
	+ \frac{\theta_{0}'''}{\theta_{0}''} \left(\sum_{\nu=1}^{n} \dot x_{\nu}\right)^{2}
	\right],
\end{flalign}
which is simply \eqref{eq:acceleration-coords}.

\subsection{The well-posedness dichotomy}

In this section, we prove our geometric characterization for the well-posedness of \eqref{eq:ID}:

\begin{proofof}{Proof of Theorem \ref{thm:wp}}
As indicated by our discussion on the inertial systems \eqref{eq:IRD} and \eqref{eq:ILD}, we will prove Theorem \ref{thm:wp} for the equivalent Euclidean dynamics \eqref{eq:ID-E};
also, we will only tackle the frictionless case $\friction=0$, the case $\friction>0$ being entirely similar.
Finally, for notational convenience, the Euclidean inner product will be denoted in what follows by $w\cdot z$ and the corresponding norm by $\abs{\argdot}$.

On account of the above, let $\xi(t)$ be a local solution orbit of \eqref{eq:ID-E} with initial conditions $\xi(0) \equiv \xi_{0}\in S$ and $\dot \xi(0) = \dot\xi_{0}\in T_{\xi_{0}}S$;
existence and uniqueness of $\xi(t)$ follow from the classical Picard\textendash Lindel\"of theorem, so assume ad absurdum that $\xi(t)$ only exists up to some maximal time $T>0$.
Accordingly, let
\begin{subequations}
\begin{flalign}
\label{eq:tangential}
F_{\alpha}
	&=\frac{1}{\sqrt{\theta_{\alpha}''}} \left(
	\payv_{\alpha}
	- \insum_{\beta} \left(\Theta''\big/\theta_{\beta}''\right) \payv_{\beta}
	\right),
\intertext{and}
\label{eq:centripetal}
N_{\alpha}
	&=\frac{\Theta''}{2\sqrt{\theta_{\alpha}''}} \insum_{\beta} \theta_{\beta}'''\big/(\theta_{\beta}'')^{2} \dot\xi_{\beta}^{2},
\end{flalign}
\end{subequations}
denote the tangential and contact force terms of \eqref{eq:ID-E} respectively.
Since $F$ is a weighted difference of bounded quantities, we will have $\abs{F(\xi(t))} \leq \Fsup$ for some $\Fsup>0$;
furthermore, it is easy to verify that $N$ is indeed normal to $S$, so, for all $t<T$, the work of the resultant force $F+N$ along $\xi(t)$ will be:
\begin{equation}
\label{eq:Wineq}
W(t)	= \int_{\xi} (F + N)
	=\int_{0}^{t}  F(\xi(s)) \cdot \dot \xi(s) \dd s
	\leq \Fsup \int_{0}^{t} \smallabs{\dot\xi(s)} \dd s
	\leq \Fsup\cdot\ell(t),
\end{equation}
where $\ell(t) = \int_{0}^{t} \smallabs{\dot\xi(s)} \dd s$ is the (Euclidean) length of $\xi$ up to time $t$.

On the other hand, with $F+N = \ddot\xi$, we will also have
\begin{equation}
W(t) = \int_{0}^{t} \ddot \xi(s) \cdot \dot \xi(s) \dd s = \tfrac{1}{2}\vel^{2}(t) - \tfrac{1}{2}\vel_{0}^{2},
\end{equation}
where $\vel(t) = \smallabs{\dot\xi(t)} = \dot\ell(t)$ is the speed of the trajectory at time $t$ and $\vel_{0} \equiv \smallabs{\dot\xi_{0}}$.
Combining with \eqref{eq:Wineq}, we thus get the differential inequality
\begin{equation}
\label{eq:elldiffineq}
\vel(t) = \dot\ell(t) \leq \sqrt{\vel_{0}^{2} + 2 \Fsup\,\ell(t)},
\end{equation}
which, after separating variables and integrating, gives:
\begin{equation}
\sqrt{\vel_{0}^{2} + 2\Fsup\,\ell(t)} - \vel_{0} \leq \Fsup\, t.
\end{equation}
It thus follows that the speed $\vel(t)$ of the trajectory is bounded by $\smallabs{\dot\xi(t)} = \vel(t) \leq \vel_{0} t + \Fsup t$;
similarly, for the total distance travelled by $\xi(t)$, we get $\ell(t) \leq \vel_{0} t + \tfrac{1}{2} \Fsup t^{2}$, so $\abs{\xi}$ and $\smallabs{\dot\xi}$ are both bounded by some $\ell_{\textup{max}}$ and $\vsup$ respectively for all $t\leq T$.

As a result, for any $s,t \in [0,T)$ with $s<t$, we will also have
\begin{equation}
\abs{\xi(t) - \xi(s)}
	\leq \int_{s}^{t} \smallabs{\dot \xi(\tau)} \dd\tau
	\leq \vsup (t-s),
\end{equation}
so, if $t_{n}\to T$ is Cauchy, the same will for $\xi(t_{n})$ as well;
hence, with $S$ closed, we will also have $\lim_{t\to T}\xi(t) \equiv \xi_{T} \in S$.
With $\dot\xi$ bounded, we then get
\begin{equation}
\smallabs{\dot\xi(t) - \dot\xi(s)}
	\leq \int_{s}^{t} \smallabs{\ddot \xi(\tau)} \dd\tau
	\leq \Fsup (t-s) + \insum_{\beta} \int_{s}^{t} \vert N_{\beta}(\xi(\tau),\dot\xi(\tau))\vert \dd\tau,
\end{equation}
and with $\sup\abs{\xi},\sup \smallabs{\dot\xi} < \infty$, it follows that the components $\vert N_{\beta}\vert$ of the contact force are also bounded:
$x(t) = G^{-1}(\xi(t))$ remains a positive distance away from $\bd(\strat)$ for all $t\leq T$, so the weight coefficients $\theta_{\beta}'''/(\theta_{\beta}'')^{2}$ of the centripetal force $N$ in \eqref{eq:centripetal} are bounded, and the same holds for the velocity components $\dot \xi_{\beta}^{2}$.
We will thus have $\smallabs{\dot\xi(t) - \dot\xi(s)} \leq a (t-s)$ for some $a>0$, so the limit $\lim_{t\to T} \dot\xi(t)$ exists and is finite.
In this way, if we take \eqref{eq:ID-E} with initial conditions $\xi(T) = \xi_{T}$ and $\dot\xi(T) = \lim_{t\to T}\dot \xi(t)$, the Picard\textendash Lindel\"of theorem shows that the original maximal solution $\xi(t)$ may be extended beyond the maximal integration time $T$, a contradiction.

For the converse implication, assume that $S$ is not closed in the ambient space $V\equiv\R^{n+1}$, let $\overline S$ denote its closure, and let $q\in \overline S\exclude{S}$.
Clearly, $\overline S$ is a closed submanifold-with-boundary of $V$ and the metric induced by the inclusion $\overline S \injects V$ on $\overline S$ will agree with the one induced by the inclusion $S\injects V$ on $S$.
With this in mind,
let $\gamma(t)$ be a geodesic of $\overline S$ which starts at $q$ with initial velocity $\vel_{0}$ pointing towards the interior of $S$, and let $T>0$ be sufficiently small so that $\gamma(T)=p\in S^{\circ}$.
Furthermore, let $\vel_{T} = \dot \gamma(T)$ be the velocity with which $\gamma(t)$ reaches $p$;
by the invariance of the geodesic equation with respect to time reflections, this means that the geodesic which starts at $p$ with velocity $-\vel_{T}$ will reach $q$ at finite time $T>0$ with outward-pointing velocity $-\vel_{0}$.
Noting that geodesics on $S$ are simply solutions of \eqref{eq:ID-E} for $\payv\equiv0$ and $\friction=0$, and carrying \eqref{eq:ID-E} back to $\intstrat$ via the isometry \eqref{eq:EC}, we have shown that \eqref{eq:ID} admits a solution which escapes from $\intstrat$ in finite time, i.e. \eqref{eq:ID} is not well-posed if $S$ is not closed.%
\footnote{For general $\payv$ and $\friction>0$, simply let $\gamma(t)$ be a solution of the dynamics $\ddot\xi = F + N +\friction \dot \xi$, i.e. \eqref{eq:ID-E} with $\friction$ replaced by $-\friction$.
The time-reflected variant of this equation is simply \eqref{eq:ID-E}, so the rest of the argument follows in the same way.}
\hfill
\end{proofof}

\bibliographystyle{siam}
\bibliography{Bibliography}

\begin{thebibliography}{10}

\bibitem{Aki79}
{\sc E.~Akin}, {\em The geometry of population genetics}, no.~31 in Lecture
  Notes in Biomathematics, Springer-Verlag, 1979.

\bibitem{Aki80}
\leavevmode\vrule height 2pt depth -1.6pt width 23pt, {\em Domination or
  equilibrium}, Mathematical Biosciences, 50 (1980), pp.~239--250.

\bibitem{Alv00}
{\sc F.~Alvarez}, {\em On the minimizing property of a second order dissipative
  system in {Hilbert} spaces}, SIAM Journal on Control and Optimization, 38
  (2000), pp.~1102--1119.

\bibitem{AABR02}
{\sc F.~Alvarez, H.~Attouch, J.~Bolte, and P.~Redont}, {\em A second-order
  gradient-like dissipative dynamical system with {Hessian} damping.
  {Applications} to optimization and mechanics}, Journal des Math\'ematiques
  Pures et Appliqu\'ees, 81 (2002), pp.~774--779.

\bibitem{ABB04}
{\sc F.~Alvarez, J.~Bolte, and O.~Brahic}, {\em Hessian {Riemannian} gradient
  flows in convex programming}, SIAM Journal on Control and Optimization, 43
  (2004), pp.~477--501.

\bibitem{Ant94}
{\sc A.~S. Antipin}, {\em Minimization of convex functions on convex sets by
  means of differential equations}, Differential Equations, 30 (1994),
  pp.~1365--1375.

\bibitem{AGR00}
{\sc H.~Attouch, X.~Goudou, and P.~Redont}, {\em The heavy ball with friction
  method, {I}. {T}he continuous dynamical system: global exploration of the
  local minima of a real-valued function by asymptotic analysis of a
  dissipative dynamical system}, Communications in Contemporary Mathematics, 2
  (2000), pp.~1--34.

\bibitem{BL89}
{\sc D.~A. Bayer and J.~C. Lagarias}, {\em The nonlinear geometry of linear
  programming {I}. {Affine} and projective scaling trajectories}, Transactions
  of the American Mathematical Society, 314 (1989), pp.~499--526.

\bibitem{BT03}
{\sc J.~Bolte and M.~Teboulle}, {\em Barrier operators and associated
  gradient-like dynamical systems for constrained minimization problems}, SIAM
  Journal on Control and Optimization, 42 (2003), pp.~1266--1292.

\bibitem{CGM15}
{\sc P.~Coucheney, B.~Gaujal, and P.~Mertikopoulos}, {\em Penalty-regulated
  dynamics and robust learning procedures in games}, Mathematics of Operations
  Research,  (to appear).

\bibitem{Dui01}
{\sc J.~J. Duistermaat}, {\em On {Hessian} {Riemannian} structures}, Asian
  Journal of Mathematics, 5 (2001), pp.~79--91.

\bibitem{Fia90}
{\sc A.~V. Fiacco}, {\em Perturbed variations of penalty function methods.
  {Example: Projective SUMT}}, Annals of Operations Research, 27 (1990),
  pp.~371--380.

\bibitem{Fri91}
{\sc D.~Friedman}, {\em Evolutionary games in economics}, Econometrica, 59
  (1991), pp.~637--666.

\bibitem{HJ98}
{\sc A.~Haraux and M.-A. Jendoubi}, {\em Convergence of solutions of
  second-order gradient-like systems with analytic nonlinearities}, Journal of
  Differential Equations, 144 (1998), pp.~313--320.

\bibitem{HMC03}
{\sc S.~Hart and A.~Mas-Colell}, {\em Uncoupled dynamics do not lead to {Nash}
  equilibrium}, American Economic Review, 93 (2003), pp.~1830--1836.

\bibitem{Hof96}
{\sc J.~Hofbauer}, {\em Evolutionary dynamics for bimatrix games: a
  {Hamiltonian} system?}, Journal of Mathematical Biology, 34 (1996),
  pp.~675--688.

\bibitem{HS90}
{\sc J.~Hofbauer and K.~Sigmund}, {\em Adaptive dynamics and evolutionary
  stability}, Applied Mathematics Letters, 3 (1990), pp.~75--79.

\bibitem{HS98}
\leavevmode\vrule height 2pt depth -1.6pt width 23pt, {\em Evolutionary Games
  and Population Dynamics}, Cambridge University Press, Cambridge, UK, 1998.

\bibitem{HS03}
\leavevmode\vrule height 2pt depth -1.6pt width 23pt, {\em Evolutionary game
  dynamics}, Bulletin of the American Mathematical Society, 40 (2003),
  pp.~479--519.

\bibitem{LM13}
{\sc R.~Laraki and P.~Mertikopoulos}, {\em Higher order game dynamics}, Journal
  of Economic Theory, 148 (2013), pp.~2666--2695.

\bibitem{Lee97}
{\sc J.~M. Lee}, {\em Riemannian Manifolds: an Introduction to Curvature},
  no.~176 in Graduate Texts in Mathematics, Springer, 1997.

\bibitem{Lee03}
\leavevmode\vrule height 2pt depth -1.6pt width 23pt, {\em Introduction to
  Smooth Manifolds}, no.~218 in Graduate Texts in Mathematics, Springer-Verlag,
  New York, NY, 2003.

\bibitem{LW94}
{\sc N.~Littlestone and M.~K. Warmuth}, {\em The weighted majority algorithm},
  Information and Computation, 108 (1994), pp.~212--261.

\bibitem{MS74}
{\sc J.~Maynard~Smith}, {\em The theory of games and the evolution of animal
  conflicts}, Journal of Theoretical Biology, 47 (1974), pp.~209--221.

\bibitem{MSP73}
{\sc J.~Maynard~Smith and G.~R. Price}, {\em The logic of animal conflict},
  Nature, 246 (1973), pp.~15--18.

\bibitem{McC89}
{\sc G.~P. McCormick}, {\em The continuous {Projective SUMT} method for convex
  programming}, Mathematics of Operations Research, 14 (1989), pp.~203--223.

\bibitem{MP95}
{\sc R.~D. McKelvey and T.~R. Palfrey}, {\em Quantal response equilibria for
  normal form games}, Games and Economic Behavior, 10 (1995), pp.~6--38.

\bibitem{MM10}
{\sc P.~Mertikopoulos and A.~L. Moustakas}, {\em The emergence of rational
  behavior in the presence of stochastic perturbations}, The Annals of Applied
  Probability, 20 (2010), pp.~1359--1388.

\bibitem{MS14}
{\sc P.~Mertikopoulos and W.~H. Sandholm}, {\em Regularized best responses and
  reinforcement learning in games}.
\newblock \url{http://arxiv.org/abs/1407.6267}, 2014.

\bibitem{MS96}
{\sc D.~Monderer and L.~S. Shapley}, {\em Potential games}, Games and Economic
  Behavior, 14 (1996), pp.~124 -- 143.

\bibitem{Nac90}
{\sc J.~H. Nachbar}, {\em Evolutionary selection dynamics in games},
  International Journal of Game Theory, 19 (1990), pp.~59--89.

\bibitem{NY83}
{\sc A.~S. Nemirovski and D.~B. Yudin}, {\em Problem Complexity and Method
  Efficiency in Optimization}, Wiley, New York, NY, 1983.

\bibitem{Nes09}
{\sc Y.~Nesterov}, {\em Primal-dual subgradient methods for convex problems},
  Mathematical Programming, 120 (2009), pp.~221--259.

\bibitem{Pol87}
{\sc B.~T. Polyak}, {\em Introduction to Optimization}, Optimization Software,
  1987.

\bibitem{Roc70}
{\sc R.~T. Rockafellar}, {\em Convex Analysis}, Princeton University Press,
  Princeton, NJ, 1970.

\bibitem{Rus99}
{\sc A.~Rustichini}, {\em Optimal properties of stimulus-response learning
  models}, Games and Economic Behavior, 29 (1999), pp.~244--273.

\bibitem{Sam93}
{\sc L.~Samuelson}, {\em Does evolution eliminate dominated strategies?}, in
  Frontiers of Game Theory, MIT Press, Cambridge, MA, 1993.

\bibitem{SZ92}
{\sc L.~Samuelson and J.~Zhang}, {\em Evolutionary stability in asymmetric
  games}, Journal of Economic Theory, 57 (1992), pp.~363--391.

\bibitem{San01}
{\sc W.~H. Sandholm}, {\em Potential games with continuous player sets},
  Journal of Economic Theory, 97 (2001), pp.~81--108.

\bibitem{San10}
\leavevmode\vrule height 2pt depth -1.6pt width 23pt, {\em Population Games and
  Evolutionary Dynamics}, Economic learning and social evolution, MIT Press,
  Cambridge, MA, 2010.

\bibitem{Sha79}
{\sc S.~M. Shahshahani}, {\em A New Mathematical Framework for the Study of
  Linkage and Selection}, no.~211 in Memoirs of the American Mathematical
  Society, American Mathematical Society, Providence, RI, 1979.

\bibitem{SS11}
{\sc S.~Shalev-Shwartz}, {\em Online learning and online convex optimization},
  Foundations and Trends in Machine Learning, 4 (2011), pp.~107--194.

\bibitem{Shi77}
{\sc H.~Shima}, {\em Symmetric spaces with invariant locally {Hessian}
  structures}, Journal of the Mathematical Society of Japan, 29 (1977),
  pp.~581--589.

\bibitem{Sor09}
{\sc S.~Sorin}, {\em Exponential weight algorithm in continuous time},
  Mathematical Programming, 116 (2009), pp.~513--528.

\bibitem{Tay79}
{\sc P.~D. Taylor}, {\em Evolutionarily stable strategies with two types of
  player}, Journal of Applied Probability, 16 (1979), pp.~76--83.

\bibitem{TJ78}
{\sc P.~D. Taylor and L.~B. Jonker}, {\em Evolutionary stable strategies and
  game dynamics}, Mathematical Biosciences, 40 (1978), pp.~145--156.

\bibitem{vD87}
{\sc E.~van Damme}, {\em Stability and perfection of {Nash} equilibria},
  Springer-Verlag, Berlin, 1987.

\bibitem{Vov90}
{\sc V.~G. Vovk}, {\em Aggregating strategies}, in COLT '90: Proceedings of the
  3rd Workshop on Computational Learning Theory, 1990, pp.~371--383.

\bibitem{Wei95}
{\sc J.~W. Weibull}, {\em Evolutionary Game Theory}, MIT Press, Cambridge, MA,
  1995.

\end{thebibliography}

\end{document}